\def\tank#1{\protected@xdef\@thanks{\@thanks
		\protect\footnotetext[0]{#1}}}
\def\bigfoot{
	
	\@footnotetext}
\newcommand{\ea}{\end{array}}
\newtheorem{theorem}{Theorem}[section]
\newtheorem{lemma}{Lemma}[section]
\newtheorem{definition}{Definition}[section]
\newtheorem{remark}{Remark}[section]
\numberwithin{equation}{section}
\newenvironment{proof}{Proof.}
\begin{document}
\title{{\Large \bf Averaging principle for semilinear slow-fast rough partial differential equations}}
\author{{Miaomiao Li$^{a}$},~~{Yunzhang Li$^{b}$},~~{Bin Pei$^{a,c,d}$}\footnote{Corresponding author: binpei@nwpu.edu.cn},~~{Yong Xu$^{a,d}$}
	\\
	\scriptsize $a.$ School of Mathematics and Statistics, Northwestern Polytechnical University, Xi'an 710072, China \\
    \scriptsize $b.$ Research Institute of Intelligent Complex Systems, Fudan University, Shanghai 200433, China\\
    \scriptsize $c.$ Research and Development Institute of Northwestern Polytechnical University in Shenzhen, Shenzhen 518057, China \\
    \scriptsize $d.$ MOE Key Laboratory of Complexity Science in Aerospace, Northwestern Polytechnical University, Xi'an 710072, China }

\date{}
\maketitle
\begin{center}
	\begin{minipage}{145mm}
		
		{\bf Abstract.} In this paper, we investigate the averaging principle for a class of semilinear slow-fast partial differential equations driven by finite-dimensional rough multiplicative noise.
Specifically, the slow component is driven by a general random $\gamma$-H\"{o}lder rough path for some $\gamma \in (1/3,1/2)$, while the fast component is driven by a Brownian rough path.
Using controlled rough path theory and the classical Khasminskii's time discretization scheme,
we demonstrate that the slow component converges strongly to the solution of the corresponding averaged equation under the H\"{o}lder topology.

\vspace{3mm} {\bf Keywords:} Rough path theory; averaging principle; rough partial differential equations; slow-fast system

	\end{minipage}
\end{center}

\tableofcontents

\section{Introduction}

In this paper, we consider the following semilinear slow-fast rough partial differential equations (RPDEs for short)
\begin{equation}\label{eq1}
	\left\{ \begin{aligned}
		&dX_t^\varepsilon=\big[A X_t^\varepsilon+F_1 (X_t^\varepsilon,Y_t^\varepsilon)\big]dt+G_1 (X_t^\varepsilon)d \mathbf{B}_t,\\
		&dY_t^\varepsilon=\frac 1\varepsilon \big[A Y_t^\varepsilon+F_2 (X_t^\varepsilon,
Y_t^\varepsilon)\big]dt+\frac 1{\sqrt{\varepsilon}}G_2 (X_t^\varepsilon,
Y_t^\varepsilon)d \mathbf{W}_t,\\
		&X_0^\varepsilon=x, ~~ Y_0^\varepsilon=y,
	\end{aligned}  \right.
\end{equation}
where $\varepsilon >0$ is a small parameter that describes the ratio of time-scale between fast component $Y^\varepsilon $ and slow component $X^\varepsilon $,
$\mathbf{B}=(B,B^2)$ is a $d$-dimensional random $\gamma$-H\"{o}lder rough path for every $\gamma \in (1/3,\gamma_0), 1/3< \gamma_0 \leq 1/2$,
$\mathbf{W}=(W,W^2)$ is a $m$-dimensional Brownian rough path (also known as enhanced Brownian motion, BM for short),
and $W, B$ are independent on a complete filtered probability space $(\Omega, \mathscr{F}, \{\mathscr{F}_t\}_{t \geq 0}, \mathbb P)$,
$A :{\rm Dom}(A)\subset \mathcal{H} \to \mathcal{H}$ is the infinitesimal generator of the analytic $C_0$-semigroup $(S_t)_{t \geq 0}$ on the separable Hilbert space $\mathcal{H}$,
the coefficients $F_1,F_2$ and $G_1,G_2$ are nonlinear terms satisfying certain suitable regularity assumptions.
We will establish the strong averaging principle for system (\ref{eq1}), which can be regarded as a functional law of large numbers.
More precisely, we will prove that the slow component $X^\varepsilon$ converges strongly to $\bar{X}$ in the sense of $L^2(\Omega;\hat{C}^\eta([0,T]; \mathcal H)), \eta \in (\gamma-1/4, \gamma)$ (see Definition \ref{space} for $\hat{C}^\eta([0,T]; \mathcal H)$) as $\varepsilon \rightarrow 0$, i.e.
\begin{equation}\label{aim}
\lim_{\varepsilon\rightarrow 0}  \mathbb E\big[\|X^\varepsilon-\bar{X}\|_{\eta, 0}^{2}\big]=0,
\end{equation}
where $\bar{X}$ is the solution of the following averaged equation
\begin{equation*}
\left\{ \begin{aligned}
d\bar{X}_t &=A\bar{X}_t dt +\bar{F}_1 (\bar{X}_t)dt +G_1(\bar{X}_t)d \mathbf{B}_t,\\
\bar{X}_0 &=x,
\end{aligned} \right.
\end{equation*}
with the averaged coefficient given by
$$
\bar{F}_1(x)=\int_{\mathcal H} F_1(x,y) \mu^x (dy),~~~~ x \in \mathcal H,
$$
and $\mu^x$ is the unique invariant measure of the process $Y_t^{x,y}$,
which is the unique solution of the following frozen equation (for a fixed slow component $x \in \mathcal H$):
$$
dY_t= A Y_t dt+F_2(x,Y_t)dt+G_2(x,Y_t) d \widetilde{W}_t,~~~~ Y_0=y \in \mathcal H,
$$
where $\widetilde{W}_t$ is a $m$-dimensional BM on another probability space $(\widetilde{\Omega} ,\widetilde{\mathscr F},\{\widetilde{\mathscr F}_t\}_{t\geq0},\widetilde{\mathbb P})$.

The slow-fast system (also called the two-time scales system) is very common in many real-world dynamical systems such as climate weather
interactions, stochastic volatility in finance, geophysical fluid flows and macro-molecules, see e.g. \cite{BKRP05,FFK12,GD03,KK13,Ki01} and the references therein.
However, it is very difficult to directly analyze or calculate such complex systems due to the two widely separated time scales and the cross-interaction between slow and fast components.
Therefore, a simplified equation that governs the evolution of the original system over a long time scale is highly desirable.
This simplified equation, which captures the essential dynamics of the original system, is often referred to as the averaged equation.
In particular, it no longer depends on the fast component and is therefore more suitable for analysis and simulation.
This theory, known as the averaging principle, demonstrates that the trajectory of the slow component $X^\varepsilon$ can be approximated by the solution $\bar{X}$ of the so-called averaged equation.

The theory of averaging, originated by Laplace and Lagrange, has been the subject of much attention in the past decades.
The first rigorous result was formulated by Bogoliubov and Mitropolsky in \cite{BM61} for ordinary differential equations (ODEs for short).
The further development of the theory in the case of stochastic differential equations (SDEs for short) was established by Khasminskii \cite{K68}.
Subsequently, the averaging principle of different types of slow-fast SDEs has been investigated extensively,
see e.g. \cite{BK04,ELV05,GR16,LDi10,LRSX20} and the references therein.
However, for stochastic partial differential equations (SPDEs for short), the problem becomes even more difficult.
In \cite{C09,CF09}, Cerrai and Freidlin established the averaging principle for slow-fast stochastic reaction-diffusion equations.
Since then, the averaging principle of SPDEs has progressed considerably over the last fifteen years.
For instance, Br\'{e}hier \cite{B12,B20} established the strong and weak orders in averaging for stochastic evolution equations of parabolic type with slow and fast time scales.
Sun et al. \cite{SXX21} demonstrated the averaging principle for slow-fast SPDEs with H\"{o}lder continuous drift coefficients by using the technique of Zvonkin's transformation and Khasminskii's time discretization method.
Moreover, the averaging principle for a class of slow-fast SPDEs with locally monotone coefficients was proved by Liu et al. \cite{LRSX23} using the techniques of time discretization and stopping time.
We refer the interested readers to \cite{BYY18,CL17,CZ22,CL23,DW14,FWL15,HLL22,WR12} and references therein for further results on this subject.

We want to point out that all of the references mentioned above assume that the driving noises are (semi-)martingales.
A natural question is whether the averaging principle still holds true when the driving noise does not have (semi-)martingale property?
A prominent example of such noise is fractional Brownian motion (FBM for short) with the Hurst index $H \in (0, 1/2) \cup (1/2, 1)$.
When $H \in (1/2, 1)$,
Hairer and Li \cite{HL20} considered a slow-fast system where the slow component is driven by FBM and proved that convergence to the averaged solution takes place in probability.
Pei and coauthors \cite{PIX23} further considered the averaging principle for slow-fast SDEs with perturbations involving BM and FBM
and confirmed that it holds in the mean square sense.
One can also refer to \cite{HXPW22,PIX20,WXY22} for further developments.
In these works, the integral with respect to (w.r.t.) FBM is interpreted as a Young integral (i.e. a generalized Riemann-Stieltjes integral).
Moreover, all of these works resort to Markovian because the fast component is driven by BM.
Notably, in recent work \cite{LS22}, Li and Sieber made the first attempt to address non-Markovian fast dynamics.
Later, Pei et al. \cite{PSX24} also considered a slow-fast system of SPDEs with non-Markovian fast components and derived the almost sure averaging principle.

However, the situation for $H < 1/2$ became quite involved and complicated because neither the Young integral nor the It\^{o} integral is applicable.
Rough path theory was originally developed by Lyons \cite{L98,LQ02}. It provides a pathwise approach to stochastic analysis and introduces a new integration theory with which one can solve SDEs driven by irregular signals and, in particular, by a FBM with $H < 1/2$ (see e.g. \cite{CQ02}).
The main feature of this theory is that a path $X$ in the vector space $V$ should not be regarded as determined by a function from an interval $I \subset \mathbb R$ to $V$,
but rather, if this path is not sufficiently regular, some additional information is needed which would play the role of the iterated integrals for regular paths, e.g. quantities like the rank two tensor:
$$X_{t,s}^2=\int_s^t \int_s^u dX_v dX_u $$
and its generalizations.
More precisely, a rough path is the original path together with its iterated integrals, i.e. a rough path $\mathbf{X} =(X, X^2)$.
It is worth highlighting that a $d$-dimensional BM $\{W_t\}_{t \geq 0}$ can also be enhanced with
$W_{t,s}^2:=\int_s^t (W_u-W_s) \otimes d W_u,$
so that $\mathbf W=(W,W^2)$ is almost surely a $\gamma$-H\"{o}lder rough path, for any $\gamma \in (1/3,1/2)$ (see \cite[Section 3]{FH14}).
The integration here is understood either in It\^{o} or Stratonovich sense.
In particular, we can use almost every realisation of $(W,W^2)$ as the driving signal of a rough differential equation (RDE for short)
$$d Y_t=b(Y_t)dt+\sigma (Y_t) d \mathbf W_t, ~~~~Y_0=y.$$
This RDE is then solved ``pathwise'' for a fixed realisation of $(W(\omega), W^2(\omega))$.
In contrast to classical SDE theory, the solution constructed via RDE is immediately well-defined as a flow, i.e. for all $y$ on a common set of
probability one.
The trade-off, however, is that the coefficient $\sigma$ must have $C^3$ regularity, as opposed to the mere Lipschitz regularity required for the classical theory.

Another motivation for developing rough path theory was to explore the dynamical properties of stochastic equations.
It is well-known that an It\^{o} SDE generates a random dynamical system (RDS) due to the flow property (see Kunita \cite{K90}).
However, whether an It\^{o} SPDE can generates a RDS has been a long-standing open problem.
The main reason is that it is not known how to extend Kolmogorov's theorem to an infinite-dimensional parameter range,
which is crucial to obtaining stochastic flows for It\^{o} SPDEs.
For SPDEs driven by additive or linear multiplicative noise,
there are existing results regarding the RDS.
This is because these special noises make it possible to transform an It\^{o} SPDE into a pathwise partial differential equation (PDE for short),
which is straightforward to generate a RDS.
However, for SPDEs driven by nonlinear multiplicative noise, this technique no longer works.
Rough path theory makes it possible to study the generation of a RDS from SPDEs driven by nonlinear multiplicative noise,
due to the fact that the solution exists pathwise, see e.g. \cite{GLS,HN20,HN22,KN23}.

Research on rough path theory has reached a certain level of maturity.
For example, the applications of the rough path analysis to the stochastic calculus w.r.t. BM are explored in \cite{CL05,FV05}.
Ledoux et al. \cite{LQZ02} established the large deviation principle and the support theorem for diffusion processes via rough paths.
On the other hand, Gubinelli \cite{G04} introduced a variant of rough path theory,
known as controlled rough path,
which leads to the same results as those of Lyons but, to some extent, is simpler and more straightforward.
Furthermore, as an application of the theory, the existence and uniqueness of solutions to ODEs driven by irregular paths are also derived in \cite{G04}.
In \cite{GT10}, Gubinelli and Tindel extended the theory of controlled rough paths to solve not only SDEs but also SPDEs driven by the infinite-dimensional Gaussian process.
Moreover, Gerasimovi\v{c}s and Hairer \cite{GH19} studied a class of semilinear SPDEs driven by finite-dimensional Wiener processes
using the theory of controlled rough paths, where the semigroup is incorporated into the definition of controlled rough paths.
Interested readers can also refer to \cite{FH14,GHN21,IXY24,YLZ23,YXP25} and the references therein for further studies.

In this paper, we want to consider the slow-fast system driven by irregular signals within the framework of rough path theory.
To the best of the authors' knowledge, such slow-fast systems have not been studied much yet.
Even in the finite-dimensional case, there are only a few results in this direction.
Pei et al. \cite{PIX21} investigated the averaging principle for a slow-fast system of RDEs driven by mixed fractional Brownian rough paths by using the theory of rough paths,
where the fast component is driven by BM and the slow component is driven by FBM with the Hurst index $H \in (1/3, 1/2]$.
Recently, the strong convergence of slow-fast RDEs in the $L^p(\Omega)$-sense ($p \in [1,\infty)$) was demonstrated by Inahama \cite{In22} using controlled rough path theory,
and the corresponding convergence rates were investigated by Yan et al. \cite{YJYM24} using the Poisson equation technique.
In addition, Pei et al. \cite{PHSX23} addressed SDEs where both slow and fast components are driven by FBM with $H \in (1/3, 1/2]$.
Using controlled rough path theory, they proved that the slow component converges almost surely to the solution of the corresponding averaged equation.

Taking the above discussions into account,
the main purpose of this paper is to study the averaging principle of an infinite-dimensional slow-fast system driven by irregular paths.
Specifically, we will establish the strong averaging principle for semilinear slow-fast RPDEs (\ref{eq1}).
In particular, our result is applicable to slow-fast SPDEs where the slow component is driven by FBM with $H \in (1/3, 1/2]$.
To the best of our knowledge, this seems to be the first paper in which the averaging principle of slow-fast SPDEs driven by irregular paths is studied.

It should be noted that although the averaging principle for SDEs driven by irregular paths has been studied in \cite{In22}, it is quite different from the case of SPDEs.
For example, in \cite{In22} one only needs to define the rough convolution $\int_0^t G(Y_s) d \mathbf{X}_s$.
However, in the infinite-dimensional case, because system (\ref{eq1}) is solved by the semigroup approach,
one of the main problems we encounter is to define a rough convolution of the form $\int_0^t S_{t-s} G(Y_s) d \mathbf{X}_s$.
This is a challenging task due to the lack of regularity of the semigroup $(S_t)_{t \geq 0}$ in zero.
To overcome this difficulty, the semigroup was directly incorporated into the definition of the controlled rough path, as in \cite{GH19}.
Moreover, note that the strong convergence (\ref{aim}) is established in the H\"{o}lder topology with the semigroup $S$,
it turns out to be quite involved to deal with the nonlinear terms that appeared in (\ref{eq1}).
The technique used here is mainly based on the effective scheme of time discretization,
which has been widely used in the literature to study the averaging principle for different types of slow-fast stochastic systems.
We want to emphasize again that once we translate our problem to the language of rough paths, most of the arguments are deterministic.
Therefore, the way in which we finally can establish the existence of a global solution and the averaging principle is based on a concatenation procedure.

The organization of this paper is as follows:
In Section \ref{sect2}, we introduce some notations for functional spaces and controlled rough paths with the semigroup,
and then give the main result of this paper.
In Section \ref{sect3}, we give a rigorous introduction to the slow-fast system of RPDEs from a deterministic and probabilistic perspective, respectively,
and show that the fast component driven by the Brownian rough path is consistent with a SPDE in the sense of It\^{o}.
In Section \ref{Proof}, we present the detailed proof of the averaging principle in the framework of controlled rough path theory.

Throughout this paper, the notation $C$ with or without subscripts will represent a positive constant, whose value may vary from one place to another,
and $C$ with subscripts will be used to emphasize that it depends on certain parameters.

\section{Preliminary and main result}\label{sect2}

In this section, we will review some functional settings of rough path theory and present the main result of this paper.

\subsection{Functional analytic framework}

Throughout this paper we consider a separable Hilbert space $(\mathcal{H},\left\langle \cdot , \cdot \right\rangle_{\mathcal{H}})$.
Let $A$ be a negative definite self-adjoint operator,
and let $S_t$ or $e^{A t}$ be the semigroup generated by $A$.
For $\alpha \geq 0$, the interpolation space $\mathcal{H}_\alpha:= {\rm Dom}((-A)^\alpha)$ is a Hilbert space endowed with the norm
$\|  \cdot  \|_{\mathcal{H}_\alpha}=\|(-A)^\alpha \cdot \|_{\mathcal{H}}$.
Similarly, $ \mathcal{H}_{-\alpha }$ is defined as the completion of $\mathcal{H}$ w.r.t. the norm
$\| \cdot \|_{\mathcal{H}_{-\alpha }}=\| (-A)^{-\alpha} \cdot \|_{\mathcal{H}}$.
Then we obtain a family of separable Banach spaces $(\mathcal{H}_\alpha, \|  \cdot  \|_{\mathcal{H}_\alpha})_{\alpha \in \mathbb R}$
with continuous and dense embedding, i.e. $\mathcal{H}_\alpha \hookrightarrow \mathcal{H}_\beta$ for $\alpha \geq \beta$.
Denote by $\mathcal L(\mathcal{H}_\alpha;\mathcal{H}_\beta)$ the space of all bounded linear operators from $\mathcal{H}_\alpha$ to $\mathcal{H}_\beta$ with the norm $\|\cdot \|_{\mathcal L(\mathcal{H}_\alpha;\mathcal{H}_\beta)}$ for $\alpha, \beta \in \mathbb R$.
If $\alpha=\beta$, we write $\mathcal L(\mathcal{H}_\alpha)=\mathcal L(\mathcal{H}_\alpha; \mathcal{H}_\alpha)$.

With this at hand, we can view the semigroup $(S_t)_{t \geq 0}$ as a linear mapping between these interpolation spaces
and obtain the following standard bounds:
For every $\alpha \geq \beta $ and $\sigma \in [0,1]$, one has the following estimates, which we label for further use, hold true.
\begin{equation}\label{semigroup}
\|S_t u \|_{\mathcal{H}_\alpha } \lesssim t^{\beta-\alpha} \|u\|_{\mathcal{H}_\beta},~~~~
\|(S_t-Id) u \|_{\mathcal{H}_{\beta-\sigma} } \lesssim t^\sigma \|u\|_{\mathcal{H}_\beta}, ~~~~u \in \mathcal{H}_\beta.
\end{equation}
Note that $x \lesssim y $ here means that there exists a constant $C>0$ such that $x \leq Cy$.
A more detailed introduction to analytic semigroup theory can be found in for example \cite{L95,p83}.

\begin{remark}
It is not difficult, according to $(\ref{semigroup})$, to obtain that
\begin{equation}\label{semi}
\|S_t\|_{\mathcal L (\mathcal{H}_\beta; \mathcal{H}_\alpha)} \lesssim t^{\beta-\alpha},~~~~
\|S_t-Id \|_{\mathcal L (\mathcal{H}_\beta; \mathcal{H}_{\beta-\sigma}) } \lesssim t^\sigma.
\end{equation}
\end{remark}

\subsection{Controlled rough path}

We now recall the notion of rough path proposed by Lyons in the 90's (see e.g. \cite{L98,LQ02}).
Since we want to focus on partial differential equations (PDEs) driven by finite-dimensional nonlinear rough multiplicative noise.
Therefore, we will use a local (in time) expansion of the controlled process as in \cite{GH19} to encode the interaction between the class of our integrand and the semigroup,
which is a slight modification of the notion of ``controlled rough path" in \cite{GT10}.

First of all, we define spaces of time increments of functions taking values in some Banach spaces,
which are very similar to those defined in \cite{DGT12,DT13,GH19,GT10}.
Fixed $T>0$ and given a Banach space $(V, \|\cdot \|_V)$,
for $n \in \mathbb N$, define $ C_n(V):=C(\Delta_n; V)$ the space of all continuous functions from $n$-simplex $\Delta_n=\{(t_1, \cdots, t_n): T \geq t_1 \geq t_2 \geq \cdots \geq t_n \geq 0\}$ to $V$.
In general, we write $C([0,T];V)\equiv C_1(V)$ is just the usual space of continuous functions with values in $V$ on $[0,T]$ and endow with the supremum norm $\|f\|_{\infty,V}=\displaystyle \sup_{t\in [0,T]} \|f_t\|_V$.
In this paper, we will only focus on the case where $n=1,2,3 $.
For any $0 \leq s \leq u \leq t \leq T$, $f \in C([0,T];V)$ and $g \in C_2(V)$, we define the increment operator $\delta$ as
$${\delta f}_{t,s}:=f_t-f_s,~~~~{\delta g}_{t,u,s}:=g_{t,s}-g_{t,u}-g_{u,s},$$
and the reduction increment operator $\hat{\delta}$ is given by
$${\hat{\delta} f}_{t,s}:=f_t-S_{t-s}f_s,~~~~{ \hat{\delta} g}_{t,u,s}:=g_{t,s}-g_{t,u}-S_{t-u}g_{u,s}.$$

\begin{definition} \label{space}
Let $(V, \|\cdot \|_V)$ be a Banach space.
For any $T>0$ and $0<\gamma \leq 1$ we set
\begin{align*}
	 |f|_{\gamma,V}:=& \sup_{0 \leq s <t \leq T} \frac {\|{\delta f}_{t,s}\|_V}{|t-s|^\gamma }, ~~f\in C([0,T];V),\\
	 \|f\|_{\gamma,V}:=& \sup_{0 \leq s <t \leq T} \frac {\|{\hat{\delta} f}_{t,s}\|_V}{|t-s|^\gamma }, ~~f\in  C([0,T];V), \\
	 |g|_{\gamma,V}:=& \sup_{0 \leq s <t \leq T} \frac {\|g_{t,s}\|_V}{|t-s|^\gamma }, ~~g\in C_2(V).
\end{align*}
Next we define the spaces as follows
\begin{align*}
	& C^\gamma ([0,T];V):=\{ f \in C([0,T];V) : |f|_{\gamma,V} < \infty \},\\
    & \hat{C}^\gamma ([0,T];V):=\{ f \in C([0,T];V) : \|f\|_{\gamma,V} < \infty \}, \\
	&  C_2^\gamma ([0,T];V):=\{ g \in C_2(V) : |g|_{\gamma,V} < \infty \}.
\end{align*}

\end{definition}

\begin{remark}

\emph{(i)} Note that we have made an abuse of the notation on $\gamma$-H\"{o}lder norms of one-parameter and two-parameter
functions, but we hope that no confusion will arise from this.

\emph{(ii)} In the case of $V=\mathcal{H}_\alpha~ (\alpha \in \mathbb R )$,
we will write $|f|_{\gamma,\mathcal{H}_\alpha}=|f|_{\gamma, \alpha}$, $\|f\|_{\gamma,\mathcal{H}_\alpha}=\|f\|_{\gamma, \alpha}$ and $\|f\|_{\infty,\mathcal{H}_\alpha}=\|f\|_{\infty, \alpha}$ for simplicity.

\emph{(iii)} In the special case $V=\mathbb R^d ~(d \in \mathbb N)$,
the letter $V$ is omitted in the above notations for simplicity,
e.g., when no confusion is possible, we write $ |\cdot |_{\gamma,\mathbb R^d}=|\cdot|_{\gamma}$.

\end{remark}

We can now provide the definition of rough paths on a finite-dimensional space $\mathbb R^d$.
\begin{definition}\label{RP}
	{\rm (Rough Path)} For $\gamma \in (1/3, 1/2]$, a pair $\mathbf{X} =(X, X^2)$ is called $\gamma$-H\"{o}lder rough path $($over $\mathbb R^d)$.
	 If $X \in C^\gamma ([0,T]; \mathbb R^d)$, $X^2 \in C_2^{2\gamma}([0,T]; \mathbb R^d \otimes \mathbb R^d)$ and satisfy Chen's relation
	 \begin{equation}\label{chen}
	 	X^2_{t,s}-X^2_{t,u}-X^2_{u,s}= \delta X_{t,u} \otimes \delta X_{u,s}, ~~\text{for all}~~0\leq s \leq u \leq t \leq T.
	 \end{equation}
Denote by $ \mathscr{C}^\gamma ([0,T]; \mathbb R^d)$ the space of $\gamma$-H\"{o}lder rough paths
and the distance between two $ \gamma$-H\"{o}lder rough paths $\mathbf X=(X, X^2)$ and $\widetilde{\mathbf X}=(\widetilde{X}, \widetilde{X}^2)$ is defined by
$$d_\gamma(\mathbf X,\widetilde{\mathbf X})=|X-\widetilde{X}|_\gamma +|X^2-\widetilde{X}^2|_{2\gamma}.$$
We write $d_\gamma(\mathbf X)=d_\gamma(0, \mathbf X)$ for simplicity.
\end{definition}

This paper is concerned with equations driven by $\sum_{k=1}^d G^k(u_t)d \mathbf X_t^k$,
where $\mathbf X=(\mathbf X^1, \cdots, \mathbf X^d)$ is a rough path on $\mathbb R^d$ and $G^k: \mathcal{H}_\alpha \rightarrow \mathcal{H}_\beta, \alpha, \beta \in \mathbb R$.
We will generally use the shorthand notation $G(u_t)d \mathbf X_t$, where $G=(G^1, \cdots, G^d):\mathcal{H}_\alpha \rightarrow \mathcal L (\mathbb R^d;\mathcal{H}_\beta)$.
For simplicity we write $\mathcal{H}_\beta^d:=\mathcal L (\mathbb R^d;\mathcal{H}_\beta)$ (resp. $\mathcal{H}_\beta^{d \times d}:=\mathcal L (\mathbb R^d \otimes \mathbb R^d;\mathcal{H}_\beta)$)
and simply write $\mathcal{H}^d:=\mathcal{H}_0^d=\mathcal L(\mathbb R^d; \mathcal H)$ (resp. $\mathcal{H}^{d \times d}:=\mathcal{H}_0^{d\times d}$) when $\beta =0$.

\begin{definition}\label{CRP}
{\rm (Controlled Rough Path)}
Let $\mathbf X=(X,X^2) \in \mathscr{C}^\gamma ([0,T]; \mathbb R^d)$ for $\gamma \in (1/3,1/2]$.
We call a pair $(Y, Y')$ a controlled rough path,
if $(Y, Y') \in \hat{C}^\gamma ([0,T];\mathcal{H}_\alpha) \times \hat{C}^\gamma ([0,T]; \mathcal{H}_\alpha^d)$ for some $\alpha \in \mathbb R$
and the remainder term $R^Y$ defined by
	\begin{equation}\label{remainder}
		R_{t,s}^Y=\hat \delta Y_{t,s}-S_{t-s}Y_s' \delta X_{t,s} ,
	\end{equation}
belongs to $C_2^{2\gamma} ([0,T];\mathcal{H}_\alpha)$,
where the component $Y'$ is the Gubinelli derivative of $Y$.
We denote by $\mathscr D_{S, \mathbf X}^{2\gamma} ([0,T]; \mathcal{H}_\alpha )$ the space of all controlled rough paths that are controlled by $\mathbf X$ according to the semigroup $S$.
Define a seminorm on this space by
\begin{equation*}
\|Y,Y'\|_{X, 2\gamma, \alpha}
	=\|Y'\|_{\gamma,\alpha}+|R^Y|_{2\gamma, \alpha},
\end{equation*}
and its norm is given by
\begin{equation*}
\|Y,Y'\|_{\mathscr D_{S, \mathbf X}^{2\gamma}}=
	\|Y_0\|_{ \mathcal{H}_\alpha}
	+ \|Y_0'\|_{\mathcal{H}_\alpha^d} +\|Y,Y'\|_{X, 2\gamma, \alpha}.
\end{equation*}
\end{definition}

\begin{remark}
\emph{(i)} If $\mathcal{H}_\alpha$ is replaced by $\mathcal{H}_\alpha^d$, the above definition still holds,
and we will still use notations $\|\cdot \|_{\gamma,\alpha}$, $|\cdot |_{\gamma,\alpha}$, etc. to represent the corresponding norms, for the sake of notation simplicity.

\emph{(ii)} In the special case where $S=Id$, this is just the usual notion of controlled rough path that was introduced by Gubinelli in \emph{\cite{G04}} \emph{(see also $\cite{LY15}$ from a different perspective)}.

\emph{(iii)}  Note that we do not include the H\"{o}lder seminorm $\|Y\|_{\gamma,\alpha}$ of $Y$ in the definition of the \emph{(semi)}norm of the controlled rough path,
since by using $(\ref{remainder})$ one immediately obtain that
\begin{equation}\label{Y}
\|Y\|_{\gamma,\alpha} \leq |R^Y|_{2\gamma ,\alpha} T^\gamma+\|Y'\|_{\infty,\alpha} |X|_\gamma
\leq C(1+|X|_\gamma)(\|Y_0'\|_{\mathcal{H}_\alpha^d}+\|Y,Y'\|_{X, 2\gamma, \alpha}),
\end{equation}
where the constant $C$ depends only on $\gamma$ and $T$.
\end{remark}

With the definition of controlled rough paths, we can define rough convolution as follows.

\begin{theorem}\label{RoughInt}
\emph{(\cite[Theorem 3.5]{GH19})}
For any $T>0$, let $\mathbf X=(X,X^2)\in \mathscr{C}^\gamma([0,T]; \mathbb R^d)$ for some $\gamma\in(1/3,1/2]$ and let
$(Y,Y^{\prime})\in\mathscr{D}_{S,\mathbf X}^{2\gamma}([0,T];\mathcal{H}_\alpha^d)$.
Then one has that the rough integral
$$\int_{s}^{t}S_{t-u}Y_{u}d \mathbf X_u:=\lim_{|\mathcal{P}|\to 0}\sum_{[u,v]\in\mathcal{P}}S_{t-u}(Y_u \delta X_{v,u}+Y_{u}^{\prime} X^2_{v,u})$$
exists in $\hat{C}^\gamma([0,T];\mathcal{H}_\alpha)$,
where the limit is taken along any partitions $\mathcal P$ of $[s,t]$ such that mesh $|\mathcal P| \rightarrow 0$.
Furthermore, for any $0\leq\beta<3\gamma$ one has the bound
\begin{align} \label{IntBound}
& \Big \|\int_{s}^{t}S_{t-u}Y_{u}d \mathbf X_{u}-S_{t-s}Y_{s} \delta X_{t,s}-S_{t-s}Y_{s}^{\prime} X^2_{t,s} \Big\|_{\mathcal{H}_{\alpha+\beta}} \nonumber\\
 \lesssim &   \big(|R^{Y}|_{2\gamma,\alpha}|X|_{\gamma}+\|Y'\|_{\gamma,\alpha}|X^2|_{2\gamma}\big)|t-s|^{3\gamma-\beta},
\end{align}
and the map
$$(Y,Y')\rightarrow (U,U'):=\Big(\int_0^\cdot S_{\cdot-u}Y_u d \mathbf X_u,Y \Big)$$
is continuous from $\mathscr D_{S,\mathbf X}^{2\gamma} ([0,T]; \mathcal{H}_\alpha^d )$ to $\mathscr D_{S,\mathbf X}^{2\gamma} ([0,T]; \mathcal{H}_\alpha )$.
\end{theorem}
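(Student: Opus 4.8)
The plan is to build the rough integral via the sewing lemma adapted to the semigroup setting, which is the standard mechanism behind Theorem \ref{RoughInt}. First I would introduce the two-parameter germ
$$\Xi_{t,s} := S_{t-s}Y_s\,\delta X_{t,s} + S_{t-s}Y_s'\,X^2_{t,s} \in \mathcal{H}_\alpha,$$
and compute its defect under the reduced increment operator $\hat\delta$. Using Chen's relation \eqref{chen}, the definition \eqref{remainder} of $R^Y$, and the semigroup law $S_{t-u}S_{u-s}=S_{t-s}$, one finds that $\hat\delta \Xi_{t,u,s}$ reduces (after cancellation of the $\delta X \otimes \delta X$ term against the second-order part) to an expression of the form
$$\hat\delta\Xi_{t,u,s} = -S_{t-u}R^Y_{u,s}\,\delta X_{t,u} - S_{t-u}\big(\delta Y'_{u,s}\big)X^2_{t,u} + (\text{terms with extra smoothing}),$$
plus a genuinely-smoothing contribution coming from $(S_{t-u}-Id)$ acting on $X^2_{u,s}$-type quantities. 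The first two visible terms are bounded in $\mathcal{H}_\alpha$ by $\big(|R^Y|_{2\gamma,\alpha}|X|_\gamma + \|Y'\|_{\gamma,\alpha}|X^2|_{2\gamma}\big)|t-s|^{3\gamma}$; the smoothing terms, via the second bound in \eqref{semigroup}, gain a factor $|t-u|^\sigma$ while losing at most $\sigma$ derivatives, so measured in $\mathcal{H}_{\alpha+\beta}$ they are controlled by $|t-s|^{3\gamma-\beta}$ for any $0\le\beta<3\gamma$. This is precisely the hypothesis of the semigroup sewing lemma (as in \cite{GT10,GH19}): a germ whose $\hat\delta$-defect is $o(|t-s|^1)$ in the base space and gains regularity in a controlled way admits a unique "integral" $I_{t,s}=\int_s^t S_{t-u}Y_u\,d\mathbf{X}_u$ with $\hat\delta I = \Xi$ up to the defect, and $I_{t,s}-\Xi_{t,s}$ enjoys the bound \eqref{IntBound}.

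Next I would unpack what the sewing lemma gives. Setting $U_t := \int_0^t S_{t-u}Y_u\,d\mathbf{X}_u$, the relation $\hat\delta U_{t,s} = S_{t-s}Y_s\,\delta X_{t,s} + S_{t-s}Y_s'X^2_{t,s} + (I_{t,s}-\Xi_{t,s})$ shows, on the one hand, that $\hat\delta U_{t,s} - S_{t-s}Y_s\,\delta X_{t,s}$ is $O(|t-s|^{2\gamma})$ in $\mathcal{H}_\alpha$ (using $|X^2|_{2\gamma}<\infty$ and $3\gamma>2\gamma$), so that $U\in\hat C^\gamma([0,T];\mathcal{H}_\alpha)$ with $U' := Y$ a legitimate Gubinelli derivative; and on the other hand, comparing with \eqref{remainder}, it identifies the remainder $R^U_{t,s} = S_{t-s}Y_s'X^2_{t,s} + (I_{t,s}-\Xi_{t,s})$, which lies in $C_2^{2\gamma}([0,T];\mathcal{H}_\alpha)$ — hence $(U,U')\in\mathscr{D}_{S,\mathbf{X}}^{2\gamma}([0,T];\mathcal{H}_\alpha)$. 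Rearranging this same identity yields exactly \eqref{IntBound}, since the left-hand side there is $U_t - U_0$ minus the germ, i.e.\ $I_{t,s}-\Xi_{t,s}$ after accounting for the lower base point; one tracks the $\mathcal{H}_{\alpha+\beta}$-norm through the smoothing estimate established in the defect computation.

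Finally, for continuity of $(Y,Y')\mapsto(U,U')$ on $\mathscr{D}_{S,\mathbf{X}}^{2\gamma}$, since the map is linear in $(Y,Y')$ it suffices to establish boundedness of the associated norms: $\|U_0\|_{\mathcal{H}_\alpha}=0$, $\|U_0'\|_{\mathcal{H}_\alpha^d}=\|Y_0\|_{\mathcal{H}_\alpha^d}$, and $\|U,U'\|_{X,2\gamma,\alpha} = \|Y\|_{\gamma,\alpha} + |R^U|_{2\gamma,\alpha}$ is dominated — via \eqref{Y} for $\|Y\|_{\gamma,\alpha}$ and via the remainder formula above together with \eqref{IntBound} and $|X^2|_{2\gamma}<\infty$ for $|R^U|_{2\gamma,\alpha}$ — by a constant times $\big(1+d_\gamma(\mathbf{X})\big)$ applied to $\|Y,Y'\|_{\mathscr{D}_{S,\mathbf{X}}^{2\gamma}}$, with the constant depending only on $\gamma$ and $T$.

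The main obstacle is the defect computation in the first step: one must carefully organize the cancellations so that the non-smoothing part of $\hat\delta\Xi$ is genuinely of order $|t-s|^{3\gamma}$ (this is where $\gamma>1/3$ enters, to beat the threshold exponent $1$ in the sewing lemma), while separating out the $(S_{t-u}-Id)$ pieces and allocating their gained regularity $\sigma$ against the derivative loss $\beta$ — the bookkeeping of interpolation indices here, ensuring $3\gamma-\beta>0$ throughout and that no term requires more smoothing than \eqref{semigroup} provides, is the delicate point. Everything after the sewing lemma is then essentially algebraic rearrangement of \eqref{remainder} and routine Hölder-norm estimates.
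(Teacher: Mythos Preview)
Your proposal is correct and follows the standard sewing-lemma approach, which is exactly how the cited reference \cite[Theorem~3.5]{GH19} establishes the result; the present paper does not give its own proof but simply quotes the theorem from \cite{GH19}, so there is nothing further to compare.
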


We finish this subsection by presenting a lemma whose main purpose is to demonstrate that rough integrals in system (\ref{eq1}) are meaningful.
Its proof can refer to \cite[Lemma 3.7]{GH19}, we omit it.
This conclusion can also be further extended to allow functions to lose some spatial regularity, see \cite[Lemma 3.11]{GH19}.

For some fixed $\alpha, \beta \in \mathbb R$ and $k,m,n \in \mathbb N$,
let $C_{\alpha, \beta}^k(\mathcal{H}^m;\mathcal{H}^n)$ be the space of $k$-order continuously (Fr\'{e}chet) differentiable functions
$G: \mathcal{H}_\theta^m \rightarrow \mathcal{H}_{\theta+\beta}^n ,\theta \geq \alpha$
with bounded derivatives $D^i G,i=0,1,\cdots,k$.
We simply write $C_{\alpha, \beta}^k(\mathcal{H}^m)=C_{\alpha, \beta}^k(\mathcal{H}^m;\mathcal{H}^m)$ for $m=n$.

\begin{lemma}\label{funCRP}
Let $G\in C_{\alpha,0}^2(\mathcal{H};\mathcal{H}^d)$,
and let $(Y,Y^{\prime})\in \mathscr{D}_{S,\mathbf X}^{2\gamma}([0,T];\mathcal{H}_\alpha)$ for each $T>0$ and $\mathbf X=(X,X^2)\in \mathscr{C}^{\gamma}([0,T];\mathbb R^d)$ with $\gamma \in (1/3,1/2]$.
In addition, we also assume that $Y\in \hat{C}^\eta([0,T];\mathcal{H}_{\alpha+2\gamma})$ with $\eta \in [0,1]$
and $Y^\prime \in  \mathcal L^\infty([0,T];\mathcal{H}_{\alpha+2\gamma}^d)$.
Define $(Z_{t},Z_{t}^{\prime})=(G(Y_t),DG(Y_t)\circ Y_t^{\prime})$,
then one has $(Z,Z^\prime)\in \mathscr{D}_{S,\mathbf X}^{2\gamma}([0,T];\mathcal{H}_\alpha^d)$ and the estimation
$$\|Z,Z'\|_{X,2\gamma,\alpha}\leq C(1+|X|_\gamma)^2(1+\|Y_0\|_{\mathcal{H}_{\alpha+2\gamma}}+\|Y\|_{\eta,\alpha+2\gamma}+\|Y^\prime\|_{\infty,\alpha+2\gamma}+\|Y,Y^\prime\|_{X,2\gamma,\alpha})^2,$$
where the constant $C$ depends on the bounds on $G$ and its derivatives, as well as on time $T$, but is uniform over $T\in(0,1]$.
\end{lemma}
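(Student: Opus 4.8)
The plan is to combine a first–order Taylor expansion of $G$ about $Y_s$ with careful bookkeeping of the interpolation spaces on which the semigroup acts, exactly as in \cite[Lemma 3.7]{GH19}; the only analytic inputs are the bounds \eqref{semi} and the a priori estimate \eqref{Y} for $\|Y\|_{\gamma,\alpha}$. Fix $0\le s\le t\le T$. Using $\hat\delta Z_{t,s}=\bigl(G(Y_t)-G(Y_s)\bigr)-(S_{t-s}-Id)G(Y_s)$, the Taylor expansion $G(Y_t)-G(Y_s)=DG(Y_s)\,\delta Y_{t,s}+\widetilde R_{t,s}$ with $\widetilde R_{t,s}=\int_0^1\bigl(DG(Y_s+\theta\,\delta Y_{t,s})-DG(Y_s)\bigr)\,\delta Y_{t,s}\,d\theta$, the identity $\delta Y_{t,s}=S_{t-s}Y_s'\,\delta X_{t,s}+R^Y_{t,s}+(S_{t-s}-Id)Y_s$, and $Z_s'=DG(Y_s)\circ Y_s'$, one rewrites the remainder $R^Z_{t,s}=\hat\delta Z_{t,s}-S_{t-s}Z_s'\,\delta X_{t,s}$ as the sum of five terms:
\[
R^Z_{t,s}=\bigl[DG(Y_s)S_{t-s}-S_{t-s}DG(Y_s)\bigr]\bigl(Y_s'\,\delta X_{t,s}\bigr)+DG(Y_s)R^Y_{t,s}+DG(Y_s)(S_{t-s}-Id)Y_s+\widetilde R_{t,s}-(S_{t-s}-Id)G(Y_s).
\]

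\textbf{Estimating the five terms.} I would show each term is $O(|t-s|^{2\gamma})$ in $\mathcal H^d_\alpha$. For $DG(Y_s)R^Y_{t,s}$ this is immediate from boundedness of $DG$ and $|R^Y|_{2\gamma,\alpha}\le\|Y,Y'\|_{X,2\gamma,\alpha}$. For $DG(Y_s)(S_{t-s}-Id)Y_s$ and $-(S_{t-s}-Id)G(Y_s)$ one uses that $Y_s\in\mathcal H_{\alpha+2\gamma}$ (hence also $G(Y_s)\in\mathcal H^d_{\alpha+2\gamma}$, with norm $\lesssim1+\|Y_s\|_{\mathcal H_{\alpha+2\gamma}}$, since $G\in C^2_{\alpha,0}$ acts on every $\mathcal H_\theta$, $\theta\ge\alpha$) together with the second estimate in \eqref{semi} with $\sigma=2\gamma\in(0,1]$; here $\|Y_s\|_{\mathcal H_{\alpha+2\gamma}}\le\|S_s Y_0\|_{\mathcal H_{\alpha+2\gamma}}+\|\hat\delta Y_{s,0}\|_{\mathcal H_{\alpha+2\gamma}}\lesssim\|Y_0\|_{\mathcal H_{\alpha+2\gamma}}+\|Y\|_{\eta,\alpha+2\gamma}$ uniformly for $s\le T\le1$. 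For the Taylor remainder, $\|\widetilde R_{t,s}\|_{\mathcal H^d_\alpha}\lesssim\|D^2G\|_\infty\|\delta Y_{t,s}\|_{\mathcal H_\alpha}^2$, and since $\|\delta Y_{t,s}\|_{\mathcal H_\alpha}\lesssim|t-s|^\gamma\|Y\|_{\gamma,\alpha}+|t-s|^{2\gamma}\|Y_s\|_{\mathcal H_{\alpha+2\gamma}}\lesssim|t-s|^\gamma\bigl(\|Y\|_{\gamma,\alpha}+\|Y_s\|_{\mathcal H_{\alpha+2\gamma}}\bigr)$ for $t-s\le1$, this is $O(|t-s|^{2\gamma})$ and quadratic in the data once \eqref{Y} is inserted. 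Finally, the commutator term is the only place where the extra regularity $Y'\in L^\infty([0,T];\mathcal H^d_{\alpha+2\gamma})$ is essential: writing it as $DG(Y_s)(S_{t-s}-Id)v-(S_{t-s}-Id)DG(Y_s)v$ with $v=Y_s'\,\delta X_{t,s}\in\mathcal H_{\alpha+2\gamma}$, $\|v\|_{\mathcal H_{\alpha+2\gamma}}\lesssim|t-s|^\gamma|X|_\gamma\|Y'\|_{\infty,\alpha+2\gamma}$, and using that $DG(Y_s)$ is also bounded $\mathcal H_{\alpha+2\gamma}\to\mathcal H^d_{\alpha+2\gamma}$, both summands are $O(|t-s|^{3\gamma})$ by \eqref{semi}. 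Collecting, $|R^Z|_{2\gamma,\alpha}$ is bounded by $C(1+|X|_\gamma)^2$ times the square of $1+\|Y_0\|_{\mathcal H_{\alpha+2\gamma}}+\|Y\|_{\eta,\alpha+2\gamma}+\|Y'\|_{\infty,\alpha+2\gamma}+\|Y,Y'\|_{X,2\gamma,\alpha}$, after replacing $\|Y\|_{\gamma,\alpha}$ via \eqref{Y} and using $\|Y_0'\|_{\mathcal H^d_\alpha}\le\|Y'\|_{\infty,\alpha+2\gamma}$.

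\textbf{The Gubinelli derivative and conclusion.} An analogous but simpler computation handles $\|Z'\|_{\gamma,\alpha}$: from $\hat\delta Z'_{t,s}=\bigl(DG(Y_t)Y_t'-DG(Y_s)Y_s'\bigr)-(S_{t-s}-Id)\bigl(DG(Y_s)Y_s'\bigr)$, the first bracket equals $DG(Y_t)\,\delta Y'_{t,s}+\bigl(DG(Y_t)-DG(Y_s)\bigr)Y_s'$, which is $O(|t-s|^\gamma)$ using $\|\delta Y'_{t,s}\|_{\mathcal H^d_\alpha}\lesssim|t-s|^\gamma(\|Y'\|_{\gamma,\alpha}+\|Y'\|_{\infty,\alpha+2\gamma})$ and the Lipschitz bound on $DG$, while $(S_{t-s}-Id)(DG(Y_s)Y_s')$ is $O(|t-s|^{2\gamma})$ because $DG(Y_s)Y_s'\in\mathcal H^{d\times d}_{\alpha+2\gamma}$; moreover $\hat\delta Z_{t,s}=R^Z_{t,s}+S_{t-s}Z_s'\delta X_{t,s}$ then shows $Z\in\hat C^\gamma([0,T];\mathcal H^d_\alpha)$, so $(Z,Z')$ has the required regularity. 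Assembling the two bounds yields the asserted estimate; all $T$–dependence enters through nonnegative powers $T^\gamma,T^{2\gamma},T^\eta$, hence the constant can be chosen uniformly over $T\in(0,1]$. The only genuine obstacle is organizational: one must track, at each step, in which interpolation space every factor lives, so that every occurrence of $S_{t-s}-Id$ can be traded for a gain of $|t-s|^{2\gamma}$ via \eqref{semi}; in particular the commutator term cannot be closed without the hypotheses that $Y$ is $\eta$-Hölder into $\mathcal H_{\alpha+2\gamma}$ and that $Y'$ is bounded in the better space $\mathcal H^d_{\alpha+2\gamma}$.
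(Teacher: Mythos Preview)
Your proof is correct and follows precisely the approach the paper intends: the paper itself omits the proof and refers to \cite[Lemma~3.7]{GH19}, which is exactly the decomposition and estimation strategy you carry out. For comparison, when the paper later does essentially this computation in the concrete setting of Lemma~\ref{Xetaest} (bounding $|R^{G_1(X^\varepsilon)}|_{2\gamma,-2\gamma}$ and $\|DG_1(X^\varepsilon)G_1(X^\varepsilon)\|_{\gamma,-2\gamma}$), it organizes the terms slightly differently---expanding with $DG_1$ evaluated at the right endpoint $X_t^\varepsilon$ and handling the ``commutator'' via the splitting $G_1(S_{t-s}X_s^\varepsilon)-S_{t-s}G_1(X_s^\varepsilon)$---but the analytic content is identical to your five-term decomposition around $Y_s$.
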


\subsection{Main result}\label{AP}

The main goal of this paper is to study the averaging principle of system (\ref{eq1}).
For this purpose, we need to make some necessary assumptions about coefficients.

\begin{enumerate}
\item [$(\mathbf{H1})$]
     For $F_1 \in C_{-2\gamma, 0}(\mathcal{H} \times \mathcal{H}; \mathcal{H})$,
     $F_2 \in C(\mathcal{H} \times \mathcal{H} ;\mathcal{H})$,
     there exist  constants $C,L_{F_2}>0$ such that for all $x,x_1,x_2 \in  \mathcal{H}_\alpha$ and $y, y_1,y_2\in \mathcal{H}_\alpha$ with $\alpha \geq -2\gamma$, we have
\begin{equation}\label{F1Lip}
\|F_1(x_1,y_1)-F_1(x_2,y_2)\|_{\mathcal{H}_\alpha} \leq  C(\|x_1-x_2\|_{\mathcal{H}_\alpha}+\|y_1-y_2\|_{\mathcal{H}_\alpha}),
\end{equation}
\begin{equation}\label{F1Bound}
\|F_1\|_\infty:= \sup_{(x,y)\in \mathcal{H}_\alpha \times \mathcal{H}_\alpha} \|F_1(x,y)\|_{\mathcal{H}_\alpha} < \infty,
\end{equation}
and for $x_1,x_2,y_1,y_2 \in \mathcal{H}$ we also have
\begin{equation}\label{F2Lip}
\|F_2(x_1,y_1)-F_2(x_2,y_2)\|_{\mathcal{H}}\leq  C\|x_1-x_2\|_{\mathcal{H}}+L_{F_2}\|y_1-y_2\|_{\mathcal{H}}.
\end{equation}

\item [$(\mathbf{H2})$] For diffusion terms $G_1, G_2$, we assume that
		$$G_1 \in C_{-2\gamma, 0}^3(\mathcal{H}; \mathcal{H}^d),~~~~
        G_2 \in C_{-2\gamma, 0}^3(\mathcal{H} \times \mathcal{H}; \mathcal{H}^m),$$
        and there exists a constant $L_{G_2}>0$ such that for any $x_1,x_2,y_1,y_2 \in \mathcal{H}$, we have
     \begin{equation}\label{Glip}
         \|G_2(x_1,y_1)-G_2(x_2,y_2)\|_{\mathcal{H}^m}\leq  C\|x_1-x_2\|_{\mathcal{H}}+L_{G_2}\|y_1-y_2\|_{\mathcal{H}}.
     \end{equation}	

\item [$(\mathbf{H3})$] Let $\{e_n\}_{n \in \mathbb N}$ be a complete orthonormal basis of $\mathcal{H}$, there exists a non-decreasing sequence of real positive numbers $\{ \lambda_n\}_{n \in \mathbb N}$ such that
		\begin{align*}
			A e_n=-\lambda_n e_n.
		\end{align*}

\item[$(\mathbf{H4})$] The smallest eigenvalue $\lambda_1$ of $-A$ and Lipschitz constants $L_{F_2},L_{G_2}$ satisfy
$$ \lambda_1- L_{F_2}-3 L_{G_2}^2 >0.$$

\item[$(\mathbf{H5})$] For any $\gamma \in (1/3, \gamma_0), 1/3<\gamma_0 \leq 1/2$ and $p\in [1,\infty)$ we have
$$\mathbb E\big[\interleave \mathbf{B} \interleave_\gamma^p \big] <\infty,$$
where the homogeneous norm $\interleave \mathbf{B} \interleave_\gamma:=|B|_\gamma +\sqrt{ |B^2|_{2\gamma} }$,
which, although not a norm in the usual sense of normed linear spaces, is a very adequate concept for rough paths.

	\end{enumerate}

\begin{remark}
\emph{(i)} The conditions $(\mathbf{H1})$ and $(\mathbf{H2})$ guarantee the existence of a unique solution to slow-fast RPDEs $(\ref{eq1})$ \emph{(cf. \cite[Theorem 2.3]{MG23})}.

\emph{(ii)}
The assumption $(\mathbf{H4})$ ensures the existence and uniqueness of the invariant probability measure and the exponential ergodicity for the frozen equation $(\text{see} ~(\ref{frozeneq})~\text{below})$ with fixed slow component $x \in \mathcal H$.

\emph{(iii)} The assumption $(\mathbf{H5})$ is mainly used to ensure that the mixed random rough path $\mathbf \Xi \in \mathscr{C}^\gamma ([0,T]; \mathbb R^{d+m}), \mathbb P$-a.s. for $\gamma \in (1/3, \gamma_0) ~(\text{see Lemma}~ \ref{mixrandom} ~\text{in Section}~ \ref{proba})$.

\end{remark}

Next, we give results on the existence and uniqueness of global (in time) solutions to system (\ref{eq1}) from the perspective of controlled rough paths.
To do this we introduce the following space:
$$\mathscr{D}_{S,\mathbf X}^{2\gamma,\beta,\eta}([0,T];\mathcal{H}_{\alpha})=\mathscr{D}_{S,\mathbf X}^{2\gamma}([0,T];\mathcal{H}_{\alpha})\cap \big(\hat{C}^{\eta}([0,T];\mathcal{H}_{\alpha+\beta})\times \mathcal L^{\infty}([0,T];\mathcal{H}_{\alpha+\beta}^d)\big),$$
where $\beta\in\mathbb{R}$ and $\eta\in[0,1]$.
We also write $\hat{C}^0=\mathcal L^\infty$ for $\eta =0$.
Let $(Y,Y')\in\mathscr{D}_{S,\mathbf X}^{2\gamma,\beta,\eta}([0,T];\mathcal{H}_\alpha)$, the seminorm of this space is defined as:
\begin{equation*}
\|Y,Y'\|_{X,2\gamma,\beta,\eta}=\|Y\|_{\eta,\alpha+\beta}+\|Y'\|_{\infty,\alpha+\beta}+\|Y,Y'\|_{X,2\gamma,\alpha}.
\end{equation*}
The norm of this space is given by
\begin{equation*}\label{norm}
\|Y,Y^\prime\|_{\mathscr{D}_{S,\mathbf X}^{2\gamma,\beta,\eta}}=\|Y_0\|_{\mathcal{H}_{\alpha +\beta}}+\|Y_0^\prime\|_{\mathcal{H}_\alpha^d}+\|Y,Y'\|_{X,2\gamma,\beta,\eta}.
\end{equation*}

For simplicity, we also introduce the following notation
$$\mathcal D_{\mathbf X}^{2\gamma, \eta}([0,T];\mathcal{H}_\alpha):=\mathscr{D}_{S,\mathbf X}^{2\gamma,2\gamma,\eta}([0,T];\mathcal{H}_{\alpha-2\gamma}),$$
and solve our equations in the space $\mathcal D_{\mathbf X}^{2\gamma, \eta}([0,T];\mathcal{H})~(\alpha=0)$.

\begin{remark}
For any $(Y,Y^\prime)\in \mathcal D_{\mathbf X}^{2\gamma, \eta}([0,T];\mathcal{H})$ we denote its norm by $\|Y, Y^\prime\|_{\mathcal D_{\mathbf X}^{2\gamma, \eta}}$ unless otherwise stated,
but in the proof of Theorem $\ref{MainRes}$ we sometimes write $\|Y,Y^\prime\|_{\mathcal{D}_{\mathbf X,[0,T]}^{2\gamma,\eta}}$ instead of $\|Y,Y^\prime\|_{\mathcal{D}_{\mathbf X}^{2\gamma,\eta}}$ to emphasise the time horizon.
\end{remark}

Since the coefficients of the fast component in system (\ref{eq1}) satisfy the global Lipschitz condition,
the proof of the existence and uniqueness is similar to that of \cite[Theorem 2.3]{MG23}.
We omit the details here.

\begin{lemma}\label{wellpose}
Assume that $(\mathbf{H1})$-$(\mathbf{H2})$ and $(\mathbf{H5})$ hold.
Then for any initial values $(x,y)\in \mathcal{H} \times \mathcal{H}$,
\emph{Eq.~(\ref{eq1})} has a unique global solution given by the controlled rough path $\big(X_t^\varepsilon,(X^\varepsilon)^\prime_t \big) \times \big( Y_t^\varepsilon,(Y^\varepsilon)^\prime_t \big) \in \mathcal D_{\mathbf B}^{2\gamma, \eta}([0,T];\mathcal{H}) \times  \mathcal D_{\mathbf W}^{2\gamma, \eta}([0,T];\mathcal{H})$ with $\eta \in [0,\gamma), \gamma \in (1/3, \gamma_0)$ and $1/3 < \gamma_0 \leq 1/2$ such that
\begin{equation*}
	\left\{ \begin{aligned}
		&\big(X_t^\varepsilon,(X^\varepsilon)^\prime_t \big)=\Big( S_t x+ \int_0^t S_{t-s} F_1 (X_s^\varepsilon,Y_s^\varepsilon)ds+ \int_0^t S_{t-s} G_1 (X_s^\varepsilon) d\mathbf{B}_s, G_1 (X_t^\varepsilon) \Big), \\
		& \big(Y_t^\varepsilon, (Y^\varepsilon)^\prime_t \big)= \Big( S_{t/\varepsilon} y+ \frac 1\varepsilon \int_0^t S_{(t-s)/{\varepsilon}} F_2 (X_s^\varepsilon,Y_s^\varepsilon)ds +\frac 1{\sqrt{\varepsilon}}\int_0^t S_{(t-s)/{\varepsilon}} G_2 (X_s^\varepsilon,Y_s^\varepsilon) d\mathbf{W}_s,  G_2 (X_t^\varepsilon,Y_t^\varepsilon) \Big).
	\end{aligned}  \right.
\end{equation*}
\end{lemma}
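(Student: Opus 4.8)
The plan is to reduce the claim to the deterministic (pathwise) well-posedness theory for semilinear RPDEs of \cite{GH19,MG23}, applied to the coupled system read as a single controlled rough path over the joint driver. First I would invoke Lemma~\ref{mixrandom}: under $(\mathbf{H5})$ and the independence of $B$ and $W$, the pair $\mathbf{\Xi}=(\mathbf{B},\mathbf{W})$ is, $\mathbb P$-a.s., a $\gamma$-H\"older rough path over $\mathbb R^{d+m}$ for every $\gamma\in(1/3,\gamma_0)$. Hence it suffices to fix a realisation $\mathbf{\Xi}(\omega)$ and argue deterministically; the measurability and the $L^2(\Omega)$-type integrability of the resulting solution are then recovered a posteriori from the continuity of the solution map in the rough-path metric together with $(\mathbf{H5})$. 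For a fixed $\varepsilon>0$ the two semigroups $(S_t)_{t\geq0}$ and $(S_{t/\varepsilon})_{t\geq0}$ are both analytic $C_0$-semigroups obeying the bounds \eqref{semigroup} (with $\varepsilon$-dependent constants), so the mild formulation displayed in the statement is exactly of the form treated in \cite[Theorem~2.3]{MG23}.

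Next I would run a Banach fixed-point argument on a short interval $[0,T_1]$ in the product space $\mathcal D_{\mathbf B}^{2\gamma,\eta}([0,T_1];\mathcal H)\times\mathcal D_{\mathbf W}^{2\gamma,\eta}([0,T_1];\mathcal H)$. The solution map sends $(X,X',Y,Y')$ to the right-hand side of the two fixed-point identities in the statement. Its well-definedness rests on Lemma~\ref{funCRP}, which guarantees that $\big(G_1(X_\cdot),DG_1(X_\cdot)\circ X'_\cdot\big)$ and $\big(G_2(X_\cdot,Y_\cdot),DG_2(X_\cdot,Y_\cdot)\circ(X'_\cdot,Y'_\cdot)\big)$ are controlled rough paths; on Theorem~\ref{RoughInt}, which then ensures that the rough convolutions $\int_0^\cdot S_{\cdot-s}G_1(X_s)\,d\mathbf{B}_s$ and $\frac1{\sqrt\varepsilon}\int_0^\cdot S_{(\cdot-s)/\varepsilon}G_2(X_s,Y_s)\,d\mathbf{W}_s$ remain in the respective controlled-rough-path spaces; and on the smoothing bound \eqref{semigroup} together with $(\mathbf{H1})$ for the drift convolutions $\int_0^\cdot S_{\cdot-s}F_1(X_s,Y_s)\,ds$ and $\frac1\varepsilon\int_0^\cdot S_{(\cdot-s)/\varepsilon}F_2(X_s,Y_s)\,ds$. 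Contractivity for $T_1$ small enough --- with $T_1$ depending only on $\varepsilon$, on $\interleave\mathbf{B}\interleave_\gamma$ and $\interleave\mathbf{W}\interleave_\gamma$, and on the bounds on the coefficients and their derivatives from $(\mathbf{H1})$--$(\mathbf{H2})$ --- then follows from the Lipschitz/boundedness hypotheses and the fact that the relevant seminorms carry a positive power of the interval length, exactly as in \cite[Theorem~2.3]{MG23}; the scalar factors $1/\varepsilon$ and $1/\sqrt\varepsilon$ only enter the constants. This yields a unique local solution.

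To globalise I would combine a priori bounds with a concatenation procedure. Since $F_1$ is bounded by \eqref{F1Bound} and $G_1\in C^3_{-2\gamma,0}$ is bounded with bounded derivatives, estimating the controlled-rough-path norm of $\big(X^\varepsilon,(X^\varepsilon)'\big)$ over consecutive small intervals by means of \eqref{IntBound}, \eqref{Y} and Lemma~\ref{funCRP}, and summing up, gives an a priori bound on $[0,T]$ depending only on $T$, $\varepsilon$, the coefficient bounds and $\interleave\mathbf{B}\interleave_\gamma$; feeding this bound into the analogous estimate for $\big(Y^\varepsilon,(Y^\varepsilon)'\big)$, and using the linear growth of $F_2$ from \eqref{F2Lip} together with the $C^3$-boundedness of $G_2$, controls the fast component as well. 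Since the local existence time $T_1$ can be taken as a function of these a priori bounds alone, finitely many concatenations extend the solution to all of $[0,T]$, uniqueness being inherited on each piece, which yields the claimed global solution in $\mathcal D_{\mathbf B}^{2\gamma,\eta}([0,T];\mathcal H)\times\mathcal D_{\mathbf W}^{2\gamma,\eta}([0,T];\mathcal H)$.

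The genuinely delicate point --- inherited from \cite{GH19,MG23} rather than new here --- is the interplay between the singularity of $(S_t)_{t\geq0}$ at $t=0$ and the construction of the rough convolutions $\int_0^t S_{t-s}G(\cdot)\,d\mathbf{X}_s$; this is exactly why one works throughout with the semigroup-weighted increment $\hat\delta$ and the space $\mathcal D_{\mathbf X}^{2\gamma,\eta}$, which makes Theorem~\ref{RoughInt} directly applicable. The only item going beyond a verbatim citation of \cite[Theorem~2.3]{MG23} is to check that replacing $S_\cdot$ by the $\varepsilon$-rescaled semigroup $S_{\cdot/\varepsilon}$ and inserting the prefactors $1/\varepsilon$ and $1/\sqrt\varepsilon$ does not spoil the contraction; as $\varepsilon$ is fixed this is routine, which is why the detailed verification is omitted.
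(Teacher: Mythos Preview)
Your proposal is correct and follows essentially the same route as the paper: the paper itself merely remarks that, since the coefficients of the fast component are globally Lipschitz, the proof is similar to \cite[Theorem~2.3]{MG23} and omits the details, while in Section~\ref{sect3.1} it makes precise the reformulation you describe---writing the coupled system as a single RPDE on $\mathscr{H}=\mathcal{H}\times\mathcal{H}$ driven by the joint rough path $\mathbf{\Xi}$ with the block-diagonal semigroup $\mathcal{S}_t=\mathrm{diag}(S_t,S_{t/\varepsilon})$, invoking Lemma~\ref{mixrandom} and the concatenation argument of \cite{MG23}, and then projecting back to the component spaces. Your sketch simply spells out the fixed-point and a priori-bound steps that the paper leaves implicit.
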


We would now like to present the main result of this paper.
\begin{theorem}\label{MainRes}
	Assume that $(\mathbf{H1})$-$(\mathbf{H5})$ hold. Then for each initial values $x\in \mathcal{H}_\eta$ and $y \in \mathcal{H}$ with $\eta \in (\gamma-1/4,\gamma) $, we have
	\begin{equation}\label{Averaging}
		\lim_{\varepsilon\rightarrow 0}  \mathbb E\big[\|X^\varepsilon-\bar{X}\|_{\eta, 0}^{2}\big]=0,
	\end{equation}
	where $\bar{X}_t$ is the unique solution of the corresponding averaged equation
	\begin{equation*}
		\left\{ \begin{aligned}
			d\bar{X}_t&=\big[ A \bar{X}_t+\bar{F}_1 (\bar{X}_t)\big]dt+G_1(\bar{X}_t)d \mathbf B_t,\\
			\bar{X}_0&=x,
		\end{aligned} \right.
	\end{equation*}
	with the averaged coefficient $\bar{F}_1 (x):=\displaystyle \int_{\mathcal{H}} F_1 (x,y) \mu^x (dy)$ for $x\in\mathcal{H}$, where $\mu^x$ is the unique invariant measure with respect to the Markov semigroup of frozen equation
 \begin{equation}\nonumber
\left\{ \begin{aligned}
	d Y_t&=\big[A Y_t+F_2(x,Y_t)\big]dt+G_2(x,Y_t) d \widetilde{W}_t,\\
	Y_0&= y.
\end{aligned} \right.
\end{equation}
\end{theorem}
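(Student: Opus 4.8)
The plan is to follow Khasminskii's time‑discretization scheme adapted to the rough‑path setting. Fix $T>0$ and a small parameter $\delta=\delta(\varepsilon)\to 0$ with $\delta/\varepsilon\to\infty$ (to be chosen later, e.g.\ $\delta\sim\varepsilon\log(1/\varepsilon)$). Partition $[0,T]$ into intervals $[t_k,t_{k+1}]$ of length $\delta$ and introduce the auxiliary fast process $\hat Y^\varepsilon$, frozen at the left endpoint: on $[t_k,t_{k+1}]$ it solves the RPDE with slow input kept equal to $X^\varepsilon_{t_k}$. The auxiliary slow process $\hat X^\varepsilon$ is then defined by
\begin{equation*}
\hat X_t^\varepsilon=S_t x+\int_0^t S_{t-s}F_1(X_{s(\delta)}^\varepsilon,\hat Y_s^\varepsilon)\,ds+\int_0^t S_{t-s}G_1(\hat X_s^\varepsilon)\,d\mathbf B_s,
\end{equation*}
where $s(\delta):=\lfloor s/\delta\rfloor\delta$. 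The triangle inequality in $\hat C^\eta([0,T];\mathcal H)$ splits the problem into three pieces: (a) $\mathbb E\|X^\varepsilon-\hat X^\varepsilon\|_{\eta,0}^2$, (b) $\mathbb E\|\hat X^\varepsilon-\tilde X^\varepsilon\|_{\eta,0}^2$, where $\tilde X^\varepsilon$ replaces $F_1$ by the averaged $\bar F_1$ evaluated along the frozen slow value, and (c) $\mathbb E\|\tilde X^\varepsilon-\bar X\|_{\eta,0}^2$. Steps (a) and (c) are Gr\"onwall‑type estimates: for (a) one subtracts the two fixed‑point equations, applies Theorem~\ref{RoughInt} and Lemma~\ref{funCRP} to control the rough‑convolution difference in the $\mathcal D_{\mathbf B}^{2\gamma,\eta}$ norm, uses $(\mathbf{H1})$ Lipschitz bounds on $F_1$ plus the modulus‑of‑continuity bound $\mathbb E\|X^\varepsilon_s-X^\varepsilon_{s(\delta)}\|_{\mathcal H}^2\lesssim\delta^{2\eta}$ (which itself comes from the controlled‑rough‑path a~priori bound of Lemma~\ref{wellpose}), and closes via a rough‑path Gr\"onwall lemma with the random constant $\interleave\mathbf B\interleave_\gamma$ integrable by $(\mathbf{H5})$; step (c) is the same argument with $\bar F_1$ Lipschitz (inherited from $(\mathbf{H1})$ and $(\mathbf{H4})$).

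The crux is step (b), the genuine averaging estimate. Here I would freeze the slow input on each block and, on $[t_k,t_{k+1}]$, write the increment $\int_{t_k}^{t}S_{t-s}\big(F_1(X^\varepsilon_{t_k},\hat Y^\varepsilon_s)-\bar F_1(X^\varepsilon_{t_k})\big)ds$. Via the time change $s\mapsto s/\varepsilon$ this block integral is comparable to $\varepsilon\int_0^{\delta/\varepsilon}S_{\varepsilon(\cdot)-s}\big(F_1(X^\varepsilon_{t_k},Y^{x,y}_s)-\bar F_1(x)\big)ds$ with $x=X^\varepsilon_{t_k}$ and $Y^{x,y}$ the frozen equation. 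The exponential ergodicity of the frozen semigroup — guaranteed by $(\mathbf{H4})$ through a dissipativity/synchronization argument giving $\widetilde{\mathbb E}\|Y^{x,y_1}_t-Y^{x,y_2}_t\|_{\mathcal H}^2\lesssim e^{-(\lambda_1-L_{F_2}-3L_{G_2}^2)t}\|y_1-y_2\|_{\mathcal H}^2$ — together with boundedness of $F_1$ yields the key mixing bound $\big\|\widetilde{\mathbb E}F_1(x,Y^{x,y}_t)-\bar F_1(x)\big\|_{\mathcal H}\lesssim (1+\|y\|_{\mathcal H})e^{-ct}$. A Markov‑property decomposition over the block (splitting $[t_k,t_{k+1}]$ itself into sub‑blocks of length $\sim\varepsilon$, exactly as in Khasminskii) converts this into $\mathbb E\|\hat X^\varepsilon-\tilde X^\varepsilon\|_{\eta,0}^2\lesssim C_T\big(\delta+\varepsilon/\delta\big)\to 0$. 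Because this whole argument is performed $\mathbb P$‑a.s.\ for a fixed realisation of $\mathbf B$ (independent of $\mathbf W$), one freezes $\omega$ in the $\mathbf B$‑variable, runs the probabilistic averaging in the $\mathbf W$‑variable, and then takes $\mathbb E$ using the integrability of $\interleave\mathbf B\interleave_\gamma$ from $(\mathbf{H5})$; the required identification of the $\mathbf W$‑driven fast RPDE with an It\^o SPDE (so that the Markov/ergodic machinery applies) is supplied by Section~\ref{sect3}.

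I expect the main obstacle to be controlling everything in the \emph{$\eta$‑H\"older topology with the semigroup}, not merely in sup norm: the regularizing factor $|t-s|^{3\gamma-\beta}$ in \eqref{IntBound} must be spent carefully to absorb the loss of regularity in the nonlinear terms $F_1,G_1$ (which only map $\mathcal H_{-2\gamma}\to\mathcal H$), and the block‑wise estimates on $\hat X^\varepsilon-\tilde X^\varepsilon$ must be shown to add up in the $\hat C^\eta$ seminorm rather than just pointwise — this forces $\eta<\gamma$ and is the reason for the restriction $\eta\in(\gamma-1/4,\gamma)$, the lower bound $\gamma-1/4$ coming from the exponent $3\gamma-\beta$ with $\beta=2\gamma$ needing $\gamma-\eta<1/4$ to beat the discretization error $\delta^{?}$. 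A secondary technical point is that the auxiliary processes $\hat Y^\varepsilon$ must be defined as genuine controlled rough paths by concatenation across blocks (using that the construction in Lemma~\ref{wellpose} is deterministic given the driving rough path), so that Theorem~\ref{RoughInt} is legitimately applicable on each block; this is routine but must be stated. Once (a), (b), (c) are assembled, choosing $\delta=\delta(\varepsilon)$ to balance the two error terms and sending $\varepsilon\to0$ gives \eqref{Averaging}.
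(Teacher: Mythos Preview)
Your overall strategy (Khasminskii discretization, auxiliary fast process $\hat Y^\varepsilon$, ergodicity of the frozen equation) matches the paper's, but the decomposition and several key technical points diverge in ways that matter.

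\textbf{Different decomposition.} You introduce auxiliary \emph{slow} processes $\hat X^\varepsilon$ and $\tilde X^\varepsilon$ and split into three comparisons. The paper does not do this: it compares $X^\varepsilon$ and $\bar X$ directly, writing
\[
X_t^\varepsilon-\bar X_t=\mathcal M_t^\varepsilon+\int_0^t S_{t-s}\big[\bar F_1(X_s^\varepsilon)-\bar F_1(\bar X_s)\big]ds+\int_0^t S_{t-s}\big[G_1(X_s^\varepsilon)-G_1(\bar X_s)\big]d\mathbf B_s,
\]
absorbs the last two terms by a rough-path Gr\"onwall/concatenation argument in the full $\mathcal D_{\mathbf B}^{2\gamma,\eta}$ norm, and reduces everything to $\mathbb E\|\mathcal M^\varepsilon,0\|_{\mathcal D_{\mathbf B}^{2\gamma,\eta}}^2\to 0$, where $\mathcal M^\varepsilon=K_1+K_2+K_3$ are purely drift terms. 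This avoids ever having to run a fixed-point for $\hat X^\varepsilon$ or $\tilde X^\varepsilon$; your route is workable but buys you three Gr\"onwall closures instead of one.

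\textbf{Where $\eta>\gamma-1/4$ really comes from.} Your guess (the $3\gamma-\beta$ exponent in the rough convolution bound) is wrong. The restriction arises when estimating the \emph{remainder} seminorm $|R^{K_i}|_{2\gamma,-2\gamma}$ of the drift terms $K_1,K_3$: one must control $\sup_{s<t}|t-s|^{-4\gamma}\|\int_s^t S_{t-r}[\cdots]dr\|_{\mathcal H_{-2\gamma}}^2$, and the paper splits according to $|t-s|\le\delta$ versus $|t-s|>\delta$. In the second regime one picks up a factor $\delta^{1/2-2\gamma}$ multiplied by the fourth-moment discretization error $\delta^{2\eta}$, and the product $\delta^{1/2+2\eta-2\gamma}$ vanishes only when $\eta>\gamma-1/4$. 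Nothing in your proposal accounts for this $|R^{\cdot}|_{2\gamma,-2\gamma}$ estimate; you only discuss the $\hat C^\eta$ seminorm, but closing the rough Gr\"onwall requires the full $\mathcal D_{\mathbf B}^{2\gamma,\eta}$ norm, hence also the $2\gamma$-remainder.

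\textbf{Rates and choice of $\delta$.} Your claimed bound $\lesssim\delta+\varepsilon/\delta$ for step~(b) and the choice $\delta\sim\varepsilon\log(1/\varepsilon)$ are both off. Because of the remainder estimate above, the averaging term $K_2$ (your step~(b)) produces powers like $\varepsilon/\delta^{1+4\gamma}$ and $\varepsilon^2/\delta^{2(1+2\gamma)}$, forcing the paper's choice $\delta=\varepsilon^{1/(2(1+2\gamma))}$. With $\delta\sim\varepsilon\log(1/\varepsilon)$ these terms blow up.

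In short: the skeleton is right, but the genuine difficulty --- controlling the $2\gamma$-H\"older remainder of the time-discretized drift in $\mathcal H_{-2\gamma}$ via the $|t-s|\lessgtr\delta$ split --- is missing from your plan, and this is precisely what pins down both the exponent restriction on $\eta$ and the correct scaling of $\delta$.
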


\section{Slow-fast rough partial differential equations}\label{sect3}

In this section, we provide definitions of slow-fast system (\ref{eq1}) from both deterministic and probabilistic perspectives.
Moreover, we also show that the fast component $Y^\varepsilon$ driven by the Brownian rough path $\mathbf W$ in system (\ref{eq1}) is consistent with an It\^{o} SPDE.

\subsection{Deterministic perspective} \label{sect3.1}

Let $\mathscr H:=\mathcal{H} \times \mathcal{H}$ be the product Hilbert space. For any $u=(u_1,u_2),v=(v_1,v_2)\in\mathscr{H}$, we denote the scalar product and the induced norm by
$$\langle u,v \rangle_{\mathscr{H}}=\langle u_1,v_1\rangle_{\mathcal{H}}+\langle u_2,
v_2\rangle_{\mathcal{H}},~~~~\|u\|_{\mathscr{H}}=\sqrt{\langle u,u\rangle_{\mathscr{H}}}=\sqrt{\|u_1\|_{\mathcal{H}}^2+\|u_2\|_{\mathcal{H}}^2}.$$
Let $Z^\varepsilon=(X^\varepsilon, Y^\varepsilon)^\top \in \mathscr H$, we will rewrite system (\ref{eq1}) (in a deterministic sense) as
\begin{equation} \label{eq2}
Z_t^\varepsilon= \mathcal S_t Z_0^\varepsilon+\int_0^t \mathcal{S}_{t-s} \mathcal F(Z_s^\varepsilon) ds+\int_0^t \mathcal{S}_{t-s} \mathcal G(Z_s^\varepsilon) d \mathbf \Xi_s,
~~~~Z_0^\varepsilon=( x, y)^\top,
\end{equation}
where $\mathbf \Xi=(\Xi, \Xi^2)$ is a rough path over $\mathbb R^{d+m}:=\mathbb R^d \oplus \mathbb R^m$ and goes as follows
\begin{equation} \label{mixRP}
\Xi_t=\begin{pmatrix} B_t \\ W_t  \end{pmatrix},   ~~~~\Xi^2_{t,s}=\begin{pmatrix} B^2_{t,s} &\displaystyle I[B,W]_{t,s}:=\int_s^t \delta B_{r,s} \otimes d W_r   \\ \displaystyle I[W,B]_{t,s}:=\int_s^t \delta W_{r,s} \otimes d B_r  & W^2_{t,s} \end{pmatrix},
\end{equation}
and
\begin{align*}
\mathcal S_t&:=diag \big( S_t ,S_{t/\varepsilon} \big),\\
\mathcal F(Z_s^\varepsilon)&:= \Big( F_1(X_s^\varepsilon, Y_s^\varepsilon), \frac 1 \varepsilon  F_2 (X_s^\varepsilon, Y_s^\varepsilon)\Big)^\top, \\
\mathcal G(Z_s^\varepsilon)&:= diag \Big( G_1(X_s^\varepsilon), \frac 1 {\sqrt{\varepsilon}}  G_2 (X_s^\varepsilon, Y_s^\varepsilon)\Big).
\end{align*}

Under assumptions $(\mathbf{H1})$ and $(\mathbf{H2})$,
it can be inferred by the concatenation procedure that Eq. (\ref{eq2}) has a unique global solution given by $(Z^\varepsilon, \mathcal G(Z^\varepsilon)) \in \mathcal D_{\mathbf \Xi}^{2\gamma ,\eta}([0,T];\mathscr{H}), \eta \in [0,\gamma)$,
which is controlled by $\mathbf \Xi=(\Xi, \Xi^2) \in \mathscr{C}^\gamma ([0,T]; \mathbb R^{d+m})$ for some $\gamma \in (1/3, 1/2)$ (cf. \cite[Theorem 2.3]{MG23}).

Next, we will verify that if $Z^\varepsilon$ solves Eq. (\ref{eq2}) on $\mathscr{H}$, then the component $X^\varepsilon$ solves a RPDE driven by rough path $\mathbf B$ with parameter $Y^\varepsilon$.

\begin{lemma}
For each $T>0$, let $(Z^\varepsilon, \mathcal G(Z^\varepsilon)) \in \mathcal D_{\mathbf \Xi}^{2\gamma ,\eta}([0,T];\mathscr{H})$ be the unique solution of Eq. \emph{(\ref{eq2})} on $[0, T]$,
then $(X^\varepsilon, G_1(X^\varepsilon)) \in \mathcal D_{\mathbf B}^{2\gamma ,\eta}([0,T];\mathcal{H})$ is the unique solution of the following RPDE
\begin{equation*}
X_t^\varepsilon=S_t x+\int_0^t S_{t-s} F_1 (X_s^\varepsilon,Y_s^\varepsilon)ds+ \int_0^t S_{t-s} G_1 (X_s^\varepsilon) d\mathbf{B}_s, ~~~~(X^\varepsilon)'_t=G_1 (X_t^\varepsilon).
\end{equation*}
\end{lemma}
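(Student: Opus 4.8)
The plan is to exploit the fact that the entire structure of Eq.~(\ref{eq2}) respects the product decompositions $\mathscr{H}=\mathcal{H}\oplus\mathcal{H}$ and $\mathbb{R}^{d+m}=\mathbb{R}^d\oplus\mathbb{R}^m$: the semigroup $\mathcal{S}_t=\mathrm{diag}(S_t,S_{t/\varepsilon})$, the nonlinearities $\mathcal{F}$ and $\mathcal{G}=\mathrm{diag}(G_1,\tfrac1{\sqrt\varepsilon}G_2)$, and the driver $\mathbf\Xi$ of (\ref{mixRP}) are all block structured. I would let $\Pi_1:\mathscr{H}\to\mathcal{H}$ be the projection onto the first factor and $\mathrm{pr}_1:\mathbb{R}^{d+m}\to\mathbb{R}^d$ the coordinate projection, apply $\Pi_1$ to (\ref{eq2}), and verify that each of the three terms on the right collapses to the corresponding term of the claimed RPDE.

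First I would check that $\mathbf{B}=(B,B^2)$ is precisely the projection of $\mathbf\Xi$ onto the first $d$ coordinates, which is what (\ref{mixRP}) says; hence Chen's relation (\ref{chen}) and the H\"older bounds for $\mathbf B$ follow by restriction, so $\mathbf B\in\mathscr{C}^\gamma([0,T];\mathbb{R}^d)$. Next, block-diagonality of $\mathcal{G}$ gives $\Pi_1\mathcal{G}(Z_s^\varepsilon)=G_1(X_s^\varepsilon)\circ\mathrm{pr}_1$, so the Gubinelli derivative $(Z^\varepsilon)'=\mathcal{G}(Z^\varepsilon)$ restricts in its first component to $G_1(X^\varepsilon)$ regarded as an $\mathcal{H}^d$-valued path, its $\mathbb{R}^m$-directions vanishing. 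Since $\mathcal{S}$ is block diagonal one has $\hat{\delta}(\Pi_1 Z^\varepsilon)_{t,s}=\Pi_1\hat{\delta}Z^\varepsilon_{t,s}$, and combining this with the previous identity the remainder of $(X^\varepsilon,G_1(X^\varepsilon))$ relative to $\mathbf B$, defined by (\ref{remainder}), equals $R^{X^\varepsilon}_{t,s}=\Pi_1 R^{Z^\varepsilon}_{t,s}$, since again the $\mathbb{R}^m$-part of $\delta\Xi_{t,s}$ drops out. Inheriting the $\hat{C}^\eta([0,T];\mathcal{H})$ and $\mathcal L^\infty([0,T];\mathcal{H}^d)$ bounds from $(Z^\varepsilon,\mathcal{G}(Z^\varepsilon))\in\mathcal{D}_{\mathbf\Xi}^{2\gamma,\eta}([0,T];\mathscr{H})$, this gives $(X^\varepsilon,G_1(X^\varepsilon))\in\mathcal{D}_{\mathbf B}^{2\gamma,\eta}([0,T];\mathcal{H})$ and identifies $(X^\varepsilon)'=G_1(X^\varepsilon)$.

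The hard part will be the consistency of the rough convolution, namely $\Pi_1\int_0^t\mathcal{S}_{t-s}\mathcal{G}(Z_s^\varepsilon)\,d\mathbf\Xi_s=\int_0^t S_{t-s}G_1(X_s^\varepsilon)\,d\mathbf B_s$. By Theorem~\ref{RoughInt} both sides exist as limits of compensated Riemann sums, and since $\Pi_1$ is continuous and linear it commutes with the limit, so it suffices to identify the projected sums. Using $\Pi_1\mathcal{S}_{t-u}=S_{t-u}\Pi_1$, block-diagonality of $\mathcal{G}$, and $\delta\Xi_{v,u}=(\delta B_{v,u},\delta W_{v,u})$, the first-order term projects to $\sum S_{t-u}G_1(X_u^\varepsilon)\,\delta B_{v,u}$. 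For the second-order term I would use Lemma~\ref{funCRP} to write $(\mathcal{G}(Z^\varepsilon))'=D\mathcal{G}(Z^\varepsilon)\circ\mathcal{G}(Z^\varepsilon)$; because $\mathcal{G}$ is block diagonal and $G_1$ does not depend on the fast variable, $\Pi_1$ of this operator contracted against the full $2\times2$ block object $\Xi^2_{v,u}$ of (\ref{mixRP}) picks out exactly the $(1,1)$-block $B^2_{v,u}$, the mixed-area entries $I[B,W]$, $I[W,B]$ and the block $W^2$ being annihilated, so the second-order term projects to $\sum S_{t-u}\big(DG_1(X_u^\varepsilon)\circ G_1(X_u^\varepsilon)\big)B^2_{v,u}$. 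Hence the projected sums coincide term by term with those defining the $\mathbf B$-rough integral of the controlled rough path $\big(G_1(X^\varepsilon),DG_1(X^\varepsilon)\circ G_1(X^\varepsilon)\big)$, and passing to the limit (again Theorem~\ref{RoughInt}, now over $\mathbb{R}^d$) yields the claimed identity. The delicate point here is precisely this compatibility of the Gubinelli-derivative/$\Xi^2$ contraction with the projection, carried out at the level of the approximating sums so that Theorem~\ref{RoughInt} applies on both sides.

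Finally, applying $\Pi_1$ to (\ref{eq2}) term by term: the initial term gives $\Pi_1\mathcal{S}_t Z_0^\varepsilon=S_t x$; the Bochner integral gives $\Pi_1\int_0^t\mathcal{S}_{t-s}\mathcal{F}(Z_s^\varepsilon)\,ds=\int_0^t S_{t-s}F_1(X_s^\varepsilon,Y_s^\varepsilon)\,ds$ by block-diagonality and linearity; and the rough integral gives $\int_0^t S_{t-s}G_1(X_s^\varepsilon)\,d\mathbf B_s$ by the previous step. This is exactly the stated RPDE, with $(X^\varepsilon)'=G_1(X^\varepsilon)$ and $(X^\varepsilon,G_1(X^\varepsilon))\in\mathcal{D}_{\mathbf B}^{2\gamma,\eta}([0,T];\mathcal{H})$. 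For uniqueness I would note that any $(X,G_1(X))\in\mathcal{D}_{\mathbf B}^{2\gamma,\eta}([0,T];\mathcal{H})$ solving this RPDE with the fixed continuous parameter path $Y^\varepsilon$ is unique by the contraction argument of \cite[Theorem~2.3]{MG23} carried out over $\mathbb{R}^d$; alternatively, such an $X$ together with $Y^\varepsilon$ reassembles into a solution of (\ref{eq2}) on $\mathscr{H}$, whose uniqueness has already been recorded.
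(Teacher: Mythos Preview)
Your proposal is correct and follows essentially the same route as the paper: both arguments exploit the block-diagonal structure of $\mathcal{S}$, $\mathcal{F}$, $\mathcal{G}$ and the driver $\mathbf\Xi$, identify the first component of the remainder $R^{Z^\varepsilon}$ as the remainder of $(X^\varepsilon,G_1(X^\varepsilon))$ relative to $\mathbf B$, and then verify consistency of the rough convolutions by computing the compensated Riemann summands explicitly and observing that, because $G_1$ depends only on $X^\varepsilon$, the first component picks out only the $B^2$-block of $\Xi^2$. Your write-up is somewhat more systematic (introducing the projections $\Pi_1$, $\mathrm{pr}_1$ and treating uniqueness explicitly), but the underlying argument is the same.
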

\begin{proof}
According to $(Z^\varepsilon, \mathcal G(Z^\varepsilon)) \in \mathcal D_{\mathbf \Xi}^{2\gamma ,\eta}([0,T];\mathscr{H})$, we have $(Z^\varepsilon, \mathcal G(Z^\varepsilon)) \in \mathscr D_{\mathcal S, \mathbf \Xi}^{2\gamma}([0,T];\mathscr{H}_{-2 \gamma})$, $\mathscr{H}_{-2 \gamma}:=\mathcal H_{-2\gamma} \times \mathcal H_{-2\gamma}$.
By Definition \ref{CRP}, we have the remainder term
\begin{align*}
R_{t,s}^{Z^\varepsilon}&=\hat{\delta}Z_{t,s}^\varepsilon-\mathcal S_{t-s} \mathcal G(Z_s^\varepsilon) \delta \Xi_{t,s} = \Bigg( \begin{aligned}
&\hat{\delta} X_{t,s}^\varepsilon \\
&\hat{\delta} Y_{t,s}^\varepsilon
\end{aligned}
\Bigg)
-\Bigg ( \begin{aligned}
S_{t-s} G_1(X_s^\varepsilon) &~~~~~~~~~~~~0 \\
0~~~~~~~~~& \displaystyle \frac 1 {\sqrt{\varepsilon}} S_{(t-s)/\varepsilon} G_2(X_s^\varepsilon, Y_s^\varepsilon)
\end{aligned}
\Bigg)
\Bigg ( \begin{aligned}
&\delta B_{t,s} \\
&\delta W_{t,s}
\end{aligned}
\Bigg)
\end{align*}
as a two-parameter function on $\mathscr{H}_{-2\gamma}$ and is $2\gamma$-H\"{o}lder continuous.
So is its component $\hat{\delta} X_{t,s}^\varepsilon-S_{t-s} G_1(X_s^\varepsilon)\delta B_{t,s}$.

Next, it is sufficient to show that the first component of rough integral $\displaystyle\int_0^t \mathcal{S}_{t-s} \mathcal G(Z_s^\varepsilon) d \mathbf \Xi_s $ is equal to $ \displaystyle\int_0^t S_{t-s} G_1 (X_s^\varepsilon) d\mathbf{B}_s $.
Notice that
\begin{align}\label{ZXInt}
&\mathcal S_{t-s} \big(\mathcal G(Z_s^\varepsilon)\delta \Xi_{t,s}+\{\mathcal G(Z^\varepsilon)\}'_s \Xi_{t,s}^2  \big)  \nonumber\\
= & \begin{pmatrix} S_{t-s} G_1(X_s^\varepsilon) \delta B_{t,s}+ S_{t-s} D_x G_1(X_s^\varepsilon)G_1(X_s^\varepsilon) B_{t,s}^2\\
\displaystyle \frac 1 {\sqrt{\varepsilon}} S_{(t-s)/\varepsilon} G_2(X_s^\varepsilon, Y_s^\varepsilon) \delta W_{t,s}
+\frac 1\varepsilon  S_{(t-s)/\varepsilon} D_y G_2(X_s^\varepsilon, Y_s^\varepsilon) G_2(X_s^\varepsilon, Y_s^\varepsilon)W_{t,s}^2
+M_{t,s} \end{pmatrix},
\end{align}
where $ M_{t,s}:= \displaystyle \frac 1 {\sqrt{\varepsilon}} S_{(t-s)/\varepsilon} D_x G_2(X_s^\varepsilon, Y_s^\varepsilon)G_1(X_s^\varepsilon) I[B,W]_{t,s}$, $D_x$ and $ D_y $ represent partial gradient operators w.r.t. $x$ and $y$, respectively.

According to the definition of rough convolution in Theorem \ref{RoughInt}.
The left hand side of (\ref{ZXInt}) is a summand in the modified Riemann sum that approximates the rough integral $\displaystyle\int_0^t \mathcal{S}_{t-s} \mathcal G(Z_s^\varepsilon) d \mathbf \Xi_s$.
The first component of the right hand side of (\ref{ZXInt}) is a summand in the Riemann sum that approximates integral $\displaystyle\int_0^t S_{t-s} G_1 (X_s^\varepsilon) d\mathbf{B}_s$.
Thus the lemma is proven.
\hfill $\square$

\end{proof}

\subsection{Probabilistic perspective} \label{proba}

All our analyses so far have been purely deterministic.
From now on, however, we will work in a stochastic setting,
since our main goal is to prove the stochastic averaging principle for (\ref{eq1}).
Therefore, a probability space $(\Omega, \mathscr F, \mathbb P)$ is given.
Let $\frac 13 < \gamma_0  \leq \frac 12$,
$\mathbf B= ( B, B^2) $ be a $\mathscr{C}^\gamma([0,T];\mathbb{R}^d)$-valued random variable (i.e. random rough path) defined on $(\Omega,\mathscr F,\mathbb P)$ for every $\gamma \in(1/3,\gamma_0)$,
and let $W=\{W_t\}_{t \in [0,T]}$ be a standard $m$-dimensional BM on the probability space $(\Omega, \mathscr F, \mathbb P)$.
We also assume that $\mathbf B$ and $W$ are independent.

Below we give the definition of the random rough path $\mathbf \Xi$, where components of $\mathbf \Xi$ are shown in (\ref{mixRP}).

\begin{definition}\label{Xi}
Let $\mathbf B= ( B, B^2) $ be a $\mathscr{C}^\gamma([0,T];\mathbb{R}^d)$-valued random variable for $\gamma \in (1/3, \gamma_0)$ and $\{W_t\}_{t\in [0,T]}$ is a $m$-dimensional BM.
Define the ``enhancement" of $W$ by
$$W_{t,s}^2:= \int_s^t \delta W_{r,s} \otimes dW_r,  ~~~~ \delta W_{r,s}=W_r-W_s,$$
where the stochastic integration is understood in the sense of It\^{o} integral.
Sometimes we indicate this by writing $W_{t,s}^{It\hat{o}}$.
Then $\mathbf W=(W,W^2)$ is an It\^{o}-type Brownian rough path
and for any $\gamma \in (1/3, 1/2)$, $\mathbf W=(W,W^2) \in \mathscr{C}^\gamma ([0,T]; \mathbb R^m)$, $\mathbb P$-a.s..
Moreover, we define
$$I[B,W]_{t,s}=\int_s^t \delta B_{r,s} \otimes dW_r$$
as an It\^{o} stochastic integral, and then it holds that
$$I[W,B]_{t,s}=\int_s^t \delta W_{r,s} \otimes dB_r=\delta W_{t,s} \otimes \delta B_{t,s} -\int_s^t dW_r \otimes \delta B_{r,s} $$
by imposing integration by parts.
By now we have set all components of the random rough path $\mathbf \Xi=(\Xi, \Xi^2)$.
\end{definition}

Define
$$\bar{\mathscr F}_t:=\sigma \Big\{ \big( ( \delta B_{r,s}, B^2_{r,s}),( \delta W_{r,s}, W^2_{r,s}) \big) : 0\leq s\leq r\leq t \Big \}=\sigma \big\{(B_r,W_r): 0 \leq r \leq  t \big\} ,$$
and denote by $\{ \mathscr F_t \}_{t \in [0,T]}$ the larger augmented filtration of $\{ \bar{\mathscr F}_t \}_{t \in [0,T]}$ and which satisfies the usual conditions. Then we have

(i) $\{W_t\}_{t\in [0,T]}$ is a $m$-dimensional BM w.r.t. the filtration $\{\mathscr F_t \}_{t\in [0,T]} $;

(ii) $\big\{ (\delta B_{t,0}, B_{t,0}^2 ) \big\} _{t\in [0,T]}$ is adapted to the filtration $\{ \mathscr F_t \}_{t\in [0,T]} $.

By Kolmogorov's continuity criterion (see \cite[Theorem 3.1]{FH14}) we obtain the following lemma, and since its proof is similar to  \cite[Lemma 4.6]{In22} or \cite[Lemma 4.3]{YJYM24},
the details of the proof will be omitted.

\begin{lemma} \label{mixrandom}
Assume that condition $(\mathbf{H5})$ holds. For each $\gamma \in (1/3, \gamma_0)$ and $\mathbf \Xi$ introduced in Definition $\ref{Xi}$, we have $\mathbf \Xi(\omega)=\mathbf \Xi=(\Xi, \Xi^2) \in \mathscr{C}^\gamma ([0,T]; \mathbb R^{d+m})$, $\mathbb P$-a.s..
Moreover, we have $\mathbb E \big[\interleave \mathbf \Xi \interleave_\gamma^p \big] <\infty$ for any $p \in [1,\infty)$.
\end{lemma}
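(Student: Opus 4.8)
The final statement to prove is Lemma \ref{mixrandom}: under hypothesis $(\mathbf{H5})$, the mixed rough path $\mathbf\Xi=(\Xi,\Xi^2)$ built in Definition \ref{Xi} is almost surely a genuine $\gamma$-H\"older rough path over $\mathbb R^{d+m}$ for every $\gamma\in(1/3,\gamma_0)$, with all moments of its homogeneous norm finite.

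\medskip

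\textbf{Plan of proof.} The plan is to verify the two defining conditions of a $\gamma$-H\"older rough path for $\mathbf\Xi$ -- the analytic (H\"older regularity) bounds and Chen's relation -- and then to promote the analytic bounds to $L^p$-moment estimates. First, I would recall that $\mathbf\Xi$ decomposes into four blocks: the diagonal blocks are $\mathbf B=(B,B^2)$ and the It\^o Brownian rough path $\mathbf W=(W,W^2)$, while the off-diagonal blocks are the cross-integrals $I[B,W]_{t,s}=\int_s^t\delta B_{r,s}\otimes dW_r$ and $I[W,B]_{t,s}=\int_s^t\delta W_{r,s}\otimes dB_r$. For $\mathbf B$, hypothesis $(\mathbf{H5})$ directly gives both the $\mathbb P$-a.s. membership in $\mathscr C^\gamma$ and the bound $\mathbb E[\interleave\mathbf B\interleave_\gamma^p]<\infty$ for all $p$. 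For $\mathbf W$, the stated fact (cited from \cite[Section 3]{FH14}) is that $\mathbf W\in\mathscr C^\gamma([0,T];\mathbb R^m)$ a.s.\ for every $\gamma\in(1/3,1/2)$, and the moment bounds $\mathbb E[|W|_\gamma^p]<\infty$, $\mathbb E[|W^2|_{2\gamma}^p]<\infty$ are classical consequences of the Burkholder--Davis--Gundy inequality plus the Kolmogorov--Chentsov criterion \cite[Theorem 3.1]{FH14}. So the real content is the two cross terms and checking Chen's relation for the full $(d+m)$-dimensional object.

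\medskip

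\textbf{Key steps.} (1) For the mixed iterated integral $I[B,W]_{t,s}$: since $B$ is $\{\mathscr F_t\}$-adapted and $W$ is an $\{\mathscr F_t\}$-Brownian motion (both recorded in the excerpt just before the lemma), $r\mapsto\int_s^r\delta B_{u,s}\otimes dW_u$ is a genuine It\^o martingale integral with integrand of finite moments. Applying BDG gives, for any $p\geq 2$,
\[
\mathbb E\big[\,|I[B,W]_{t,s}|^p\,\big]\lesssim \mathbb E\Big[\Big(\int_s^t|\delta B_{r,s}|^2\,dr\Big)^{p/2}\Big]\lesssim \mathbb E\big[\,|B|_\gamma^p\,\big]\,|t-s|^{(2\gamma+1)p/2},
\]
using $|\delta B_{r,s}|\leq|B|_\gamma|r-s|^\gamma$ and integrating; hence after the Kolmogorov--Chentsov criterion \cite[Theorem 3.1]{FH14} one gets $I[B,W]\in C_2^{2\gamma}$ a.s.\ (any exponent strictly below $\gamma+1/2>2\gamma$ works) with $\mathbb E[|I[B,W]|_{2\gamma}^p]<\infty$. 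Here I would note that condition $(\mathbf{H5})$ is exactly what is needed to make $\mathbb E[|B|_\gamma^p]$ appear on the right-hand side and be finite; the independence of $B$ and $W$ lets one condition on $B$ first, treat the integral as a Wiener integral against $W$, and then take expectation in $B$. (2) For $I[W,B]_{t,s}$: use the integration-by-parts identity already given in Definition \ref{Xi}, namely $I[W,B]_{t,s}=\delta W_{t,s}\otimes\delta B_{t,s}-\int_s^t dW_r\otimes\delta B_{r,s}$, so that $I[W,B]$ is a sum of a product of two $\gamma$-H\"older increments (hence $2\gamma$-H\"older in the $C_2$ sense) and another It\^o-type integral handled exactly as in step (1). (3) Chen's relation: the block-diagonal parts inherit Chen's relation from $\mathbf B$ and $\mathbf W$; for the off-diagonal blocks one checks the algebraic identity $I[B,W]_{t,s}-I[B,W]_{t,u}-I[B,W]_{u,s}=\delta B_{u,s}\otimes\delta W_{t,u}$ (and the symmetric one for $I[W,B]$) directly from the additivity of the It\^o integral over $[s,u]\cup[u,t]$ and the decomposition $\delta B_{r,s}=\delta B_{r,u}+\delta B_{u,s}$ on $[u,t]$ -- this is a pathwise-style computation valid a.s. (4) Finally, assemble: $\interleave\mathbf\Xi\interleave_\gamma\leq \interleave\mathbf B\interleave_\gamma+\interleave\mathbf W\interleave_\gamma+C(|I[B,W]|_{2\gamma}^{1/2}+|I[W,B]|_{2\gamma}^{1/2})$ (up to block-norm equivalence on $\mathbb R^{d+m}$), and Minkowski plus steps (1)--(2) give $\mathbb E[\interleave\mathbf\Xi\interleave_\gamma^p]<\infty$ for all $p\in[1,\infty)$; a.s.\ finiteness of each summand gives $\mathbf\Xi\in\mathscr C^\gamma$ a.s.

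\medskip

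\textbf{Main obstacle.} The routine part is the BDG/Kolmogorov machinery; the one place requiring care is the mixed term $I[B,W]$, because $B$ is only a \emph{random} rough path with no a priori martingale or Gaussian structure -- it is merely $\{\mathscr F_t\}$-adapted with $\interleave\mathbf B\interleave_\gamma\in L^p$. The clean way around this is precisely the conditioning argument: freeze a path of $B$, use that $\delta B_{\cdot,s}$ is then a deterministic $\gamma$-H\"older integrand against the Brownian motion $W$, apply BDG to get the $|t-s|^{(2\gamma+1)p/2}$ bound with the (now deterministic) factor $|B|_\gamma^p$, and then integrate out over $B$ using $(\mathbf{H5})$. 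One must also make sure the Kolmogorov criterion is applied to the two-parameter process $I[B,W]_{t,s}$ in the correct form (e.g.\ as in \cite[Theorem 3.1]{FH14} or the sewing-type version used in \cite{FH14} for iterated integrals), choosing $p$ large enough that $(2\gamma+1)/2-1/p>2\gamma$, which is possible since $2\gamma<\gamma+1/2$. Since the proof is, as the authors note, essentially identical to \cite[Lemma 4.6]{In22} or \cite[Lemma 4.3]{YJYM24}, I would at this point simply refer to those references for the remaining details rather than reproduce the full computation.
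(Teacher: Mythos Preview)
Your proposal is correct and follows essentially the same approach as the paper: the paper itself omits the proof entirely, pointing to Kolmogorov's continuity criterion \cite[Theorem 3.1]{FH14} and noting that the argument is identical to \cite[Lemma 4.6]{In22} or \cite[Lemma 4.3]{YJYM24}, which is precisely the BDG-plus-Kolmogorov treatment of the cross terms and the block-wise verification of Chen's relation that you outline. Your sketch in fact supplies more detail than the paper does.
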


Next we want to show that the component $Y^\varepsilon$ defined against the Brownian rough path $\mathbf W=(W,W^2)$ in the slow-fast system (\ref{eq1}) is consistent with a SPDE in the sense of It\^{o}.
To do this, we introduce the space of controlled rough paths that are allowed to explode in finite time.
Define a new Banach space $\bar{V}=V \cup \{\infty\}$ for any Banach space $V$.
The topology on the space $\bar{V}$ is induced by the basis containing open balls of $V$
and the sets of the form $\{v\in V:\|v\|_V\geq N\} \cup \{\infty\}$ for every $N>0$.
Then we define the space of controlled rough paths that can blow up in finite time as follows:
\begin{align*}
\hat{\mathscr{D}}_{S,\mathbf X}^{2\gamma,\beta,\eta}(\mathbb{R}_+;\mathcal{H}_{\alpha})=&\big\{(Y,Y^{\prime})\in \hat{C}^\eta (\mathbb{R}_{+};\bar{\mathcal{H}}_{\alpha+\beta})\times C(\mathbb{R}_{+};\bar{\mathcal{H}}_{\alpha+\beta}^d):\exists ~\tau>0 ~\text{such that}\\
&(Y,Y^{\prime})\restriction_{[0,\tau)} \in \mathscr{D}_{S,\mathbf X}^{2\gamma,\beta,\eta}([0,\tau);\mathcal{H}_{\alpha})~~\text{and}~~(Y_t,Y_t^\prime)=(\infty,\infty),~\forall t\geq\tau \big\},
\end{align*}
where controlled rough paths are controlled by the rough path $\mathbf X$ according to the semigroup $S$, the $\tau$ denotes a blow up time of $(Y,Y^\prime)$ and can be taken $\infty$ if controlled rough paths have finite $\mathscr{D}_{S,\mathbf X}^{2\gamma,\beta,\eta}$ norm on each compact interval.

Let $N>0$, we define the following stopping time
$$\tau_N^\varepsilon:=\inf \big\{ t \geq 0: \|Z_t^\varepsilon\|_{\mathscr H} \geq N \big\}.$$

\begin{lemma}\label{RSPDE}
Assume that $(\mathbf{H5})$ holds. Let $\{W_t\}_{t \geq 0} $ be a $m$-dimensional BM defined on the filtered probability space $(\Omega, \mathscr F, \{\mathscr F_t\}_{t\geq 0},\mathbb P)$,
and let $(Z^\varepsilon,\mathcal G(Z^\varepsilon)) \in \mathcal D_{\mathbf \Xi}^{2\gamma ,\eta}([0,T];\mathscr{H})$ with $\eta \in [0,\gamma)$ and $\gamma \in (1/3, \gamma_0)$ which is the unique solution to Eq. $(\ref{eq2})$
and is adapted to the filtration $\{ \mathscr{F}_t\}_{t \geq 0}$ when viewed as an element of $\hat{\mathscr D}_{\mathcal S,\mathbf \Xi}^{2\gamma ,2\gamma, \eta}(\mathbb R_+;\mathscr{H}_{-2\gamma})$.
Then for each fixed $\varepsilon \in (0,1]$, we have
that the component $Y^\varepsilon$ of $Z^\varepsilon$ satisfies the following It\^{o} SPDE on $[0,T]$,
\begin{equation}\label{eqY}
Y_t^\varepsilon= S_{t/\varepsilon} y+ \frac 1\varepsilon \int_0^{t} S_{(t-s)/\varepsilon} F_2(X_s^\varepsilon, Y_s^\varepsilon)ds
+\frac 1{\sqrt{\varepsilon}} \int_0^{t} S_{(t-s)/\varepsilon} G_2(X_s^\varepsilon, Y_s^\varepsilon) dW_s.
\end{equation}
\end{lemma}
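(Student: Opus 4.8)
The plan is to reduce the statement to the single identity ``rough convolution $=$ It\^{o} stochastic convolution'' and then to prove the latter by comparing Riemann sums. By Lemma~\ref{wellpose}, the $Y^\varepsilon$-component of the (deterministically constructed) solution of Eq.~(\ref{eq2}) satisfies, for every $t\in[0,T]$,
\begin{equation*}
Y_t^\varepsilon = S_{t/\varepsilon}y + \frac1\varepsilon\int_0^t S_{(t-s)/\varepsilon}F_2(X_s^\varepsilon,Y_s^\varepsilon)\,ds + \frac1{\sqrt\varepsilon}\int_0^t S_{(t-s)/\varepsilon}G_2(X_s^\varepsilon,Y_s^\varepsilon)\,d\mathbf W_s,
\end{equation*}
where, as in Section~\ref{sect3.1}, the last term is understood as the $Y$-row of the rough convolution $\int_0^t\mathcal S_{t-s}\mathcal G(Z_s^\varepsilon)\,d\mathbf\Xi_s$. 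Since its drift term coincides with the one in (\ref{eqY}), and since by hypothesis $Z^\varepsilon$ --- hence also $s\mapsto G_2(X_s^\varepsilon,Y_s^\varepsilon)$ and all controlled-rough-path norms of $Z^\varepsilon$ --- is $\{\mathscr F_t\}$-adapted, it suffices to show that for each fixed $t\in[0,T]$,
\begin{equation*}
\int_0^t S_{(t-s)/\varepsilon}G_2(X_s^\varepsilon,Y_s^\varepsilon)\,d\mathbf W_s = \int_0^t S_{(t-s)/\varepsilon}G_2(X_s^\varepsilon,Y_s^\varepsilon)\,dW_s \qquad \mathbb P\text{-a.s.},
\end{equation*}
the right-hand side being the It\^{o} stochastic convolution, which is well defined because $s\mapsto S_{(t-s)/\varepsilon}G_2(Z_s^\varepsilon)$ is a bounded (recall $G_2\in C_{-2\gamma,0}^3$), continuous, adapted process and $W$ is finite-dimensional. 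As both sides are $t$-continuous, the identity then holds for all $t$ simultaneously, $\mathbb P$-a.s., which is (\ref{eqY}). It is convenient first to work on $[0,\tau_N^\varepsilon\wedge T]$, where all quantities appearing below are bounded, and then to let $N\to\infty$, using that $Z^\varepsilon$ has no blow-up in finite time.

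I would next write out the relevant Riemann sums. Fix a sequence of partitions $\mathcal P$ of $[0,t]$ with $|\mathcal P|\to0$. By Theorem~\ref{RoughInt}, the rough convolution equals the $\mathbb P$-a.s.\ limit of $\sum_{[u,v]\in\mathcal P}\mathcal S_{t-u}\big(\mathcal G(Z_u^\varepsilon)\,\delta\Xi_{v,u}+\{\mathcal G(Z^\varepsilon)\}'_u\,\Xi^2_{v,u}\big)$, whose $Y$-row, by the computation (\ref{ZXInt}), is $\sum_{[u,v]\in\mathcal P}\big(a_{[u,v]}+b_{[u,v]}+c_{[u,v]}\big)$ with
\begin{align*}
a_{[u,v]} &:= \tfrac1{\sqrt\varepsilon}\,S_{(t-u)/\varepsilon}\,G_2(Z_u^\varepsilon)\,\delta W_{v,u}, \\
b_{[u,v]} &:= \tfrac1\varepsilon\,S_{(t-u)/\varepsilon}\,D_yG_2(Z_u^\varepsilon)\,G_2(Z_u^\varepsilon)\,W^2_{v,u}, \\
c_{[u,v]} &:= \tfrac1{\sqrt\varepsilon}\,S_{(t-u)/\varepsilon}\,D_xG_2(Z_u^\varepsilon)\,G_1(X_u^\varepsilon)\,I[B,W]_{v,u}.
\end{align*}
On the other hand, by the standard approximation of the It\^{o} integral by its left-point Riemann sums, $\sum_{[u,v]\in\mathcal P}a_{[u,v]}\to\frac1{\sqrt\varepsilon}\int_0^t S_{(t-s)/\varepsilon}G_2(Z_s^\varepsilon)\,dW_s$ in $L^2(\Omega;\mathcal H)$ as $|\mathcal P|\to0$. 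Hence everything reduces to showing that $\sum_{[u,v]}b_{[u,v]}\to0$ and $\sum_{[u,v]}c_{[u,v]}\to0$ in $L^2(\Omega;\mathcal H)$.

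This is where the It\^{o} (rather than Stratonovich) nature of $W^2$ and of $I[B,W]$ enters decisively. After ordering the partition, both $(b_{[u,v]})_{[u,v]\in\mathcal P}$ and $(c_{[u,v]})_{[u,v]\in\mathcal P}$ are sequences of $\{\mathscr F_t\}$-martingale increments: their operator prefactors are $\mathscr F_u$-measurable, while $W^2_{v,u}=\int_u^v\delta W_{r,u}\otimes dW_r$ and $I[B,W]_{v,u}=\int_u^v\delta B_{r,u}\otimes dW_r$ are genuine It\^{o} integrals against the $\{\mathscr F_t\}$-Brownian motion $W$ --- for the latter, $r\mapsto\delta B_{r,u}$ is $\{\mathscr F_r\}$-adapted and square-integrable --- so $\mathbb E[W^2_{v,u}\mid\mathscr F_u]=\mathbb E[I[B,W]_{v,u}\mid\mathscr F_u]=0$. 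Consequently all cross terms vanish and, using $\|S_\cdot\|_{\mathcal L(\mathcal H)}\lesssim1$, the boundedness of $G_1,G_2,D_xG_2,D_yG_2$, and the moment bounds $\mathbb E|W^2_{v,u}|^2\lesssim|v-u|^2$ and $\mathbb E|I[B,W]_{v,u}|^2=\int_u^v\mathbb E|\delta B_{r,u}|^2\,dr\lesssim|v-u|^{2\gamma+1}$ (the last one by $(\mathbf{H5})$),
\begin{equation*}
\mathbb E\Big\|\sum_{[u,v]}b_{[u,v]}\Big\|_{\mathcal H}^2 = \sum_{[u,v]}\mathbb E\|b_{[u,v]}\|_{\mathcal H}^2 \lesssim \frac{T}{\varepsilon^2}\,|\mathcal P| \to 0, \qquad \mathbb E\Big\|\sum_{[u,v]}c_{[u,v]}\Big\|_{\mathcal H}^2 \lesssim \frac{T}{\varepsilon}\,|\mathcal P|^{2\gamma} \to 0 .
\end{equation*}
To conclude, I would pass to a subsequence along which the rough-sum convergence, the $L^2$-convergence of $\sum a$, and the $L^2$-convergence $\sum(b+c)\to0$ all hold $\mathbb P$-a.s.; then the rough convolution equals the It\^{o} stochastic convolution $\mathbb P$-a.s., which gives (\ref{eqY}) on $[0,\tau_N^\varepsilon\wedge T]$, and letting $N\to\infty$ completes the proof. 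The main obstacle is precisely this comparison: showing that the area-type correction terms $b$ and $c$ in the (pathwise) rough integral are asymptotically negligible, which rests on the It\^{o} choice for $W^2$ and $I[B,W]$, hence on the conditional-centering / martingale structure, and on reconciling the a.s.\ convergence of the rough Riemann sums with the $L^2$-convergence of the It\^{o} ones.
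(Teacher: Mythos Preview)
Your proposal is correct and follows essentially the same route as the paper: localize via the stopping time $\tau_N^\varepsilon$, split the rough Riemann sum for the $Y$-row into the three summands $a$, $b$, $c$ coming from (\ref{ZXInt}), identify $\sum a$ with the It\^{o} approximation, and kill $\sum b$ and $\sum c$ in $L^2(\Omega)$ using the martingale-increment (orthogonality) structure together with the moment bounds $\mathbb E|W^2_{v,u}|^2\lesssim|v-u|^2$ and $\mathbb E|I[B,W]_{v,u}|^2\lesssim|v-u|^{2\gamma+1}$ (the latter via $(\mathbf{H5})$), then pass to a subsequence and let $N\to\infty$. The paper additionally invokes Fatou's lemma to pass the $L^2$-bound through the limit, but otherwise the argument is the same.
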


\begin{proof}
Note that the map
$$W \!\restriction_{[0,t]} \mapsto (W, W^2)\!\restriction_{[0,t]} \in \mathscr{C}^\gamma ([0,t]; \mathbb R^m) $$
is measurable.
For every $t < T$, the solution $(Y^\varepsilon,G_2(X^\varepsilon, Y^\varepsilon)) \in \hat{\mathscr D}_{S,\mathbf W}^{2\gamma ,2\gamma, \eta}([0,t];\mathcal{H}_{-2\gamma})$ to the component $Y^\varepsilon$ of $Z^\varepsilon$ is a continuous image of the noise $(W,W^2)\!\restriction_{[0,t]}$ (see \cite[Theorem 4.5]{GH19}).
Considering $(Y_t^\varepsilon(\omega),G_2(X_t^\varepsilon(\omega),Y_t^\varepsilon(\omega)))$ as an element of $ \hat{\mathscr D}_{S,\mathbf W}^{2\gamma ,2\gamma, \eta}(\mathbb R_+;\mathcal{H}_{-2\gamma})$
we deduce that is adapted to the filtration $\{ \mathscr F_t \}_{t \geq 0}$.
Therefore, for the proof of our result, it is sufficient to show that the second component of the random rough integral $\displaystyle \int_0^t \mathcal{S}_{t-s} \mathcal G(Z_s^\varepsilon) d \mathbf \Xi_s$ is equivalent to an It\^{o} integral.

For $t>0$, let
\begin{equation}\label{ZL}
V_t^N= \frac 1{\sqrt{\varepsilon}} \int_0^{t\wedge \tau_N^\varepsilon} S_{(t-s)/\varepsilon} G_2(X_s^\varepsilon, Y_s^\varepsilon) dW_s.
\end{equation}
According to the definition of stopping time $\tau_N^\varepsilon$ and the local boundedness of $G_1$ and $G_2$,
there exists a constant $\hat{N}>0$ such that almost surely for $t < \tau_N^\varepsilon$, one has
\begin{align} \label{G_2Bound}
&\|G_2(X_t^\varepsilon, Y_t^\varepsilon)\|_{\mathcal H^m}
+\|D_x G_2(X_t^\varepsilon, Y_t^\varepsilon)G_1(X_t^\varepsilon)\|_{\mathcal H^{d \times m}} +\|D_y G_2(X_t^\varepsilon, Y_t^\varepsilon)G_2(X_t^\varepsilon, Y_t^\varepsilon) \|_{\mathcal H^{m \times m}} <\hat{N}.
\end{align}
Therefore, It\^{o} integral (\ref{ZL}) exists because $\displaystyle \int_0^t\|G_2(X_s^\varepsilon, Y_s^\varepsilon)\|_{\mathcal H^m}^2 1_{\{s < \tau_N^\varepsilon\}} ds \leq \hat{N}^2 t$, and since $\tau_N^\varepsilon$ is a stopping time, the integrand is adapted to the filtration $\mathscr F_t$.

For all $n,k \in \mathbb{N}$, let $\mathcal{P}_{n}=\{s_{k}^{n}\}_{k=0}^{\infty}$ be a sequence of increasing countable subsets of $\mathbb R_+$ such that $\bigcup_n \mathcal{P}_{n}$ is dense in $\mathbb{R}_+$ and $s_k^n<s_{k+1}^n$.
We pick the partition sequence $\pi_n=\big \{[s_k^n,s_{k+1}^n]:s_k^n\in\mathcal{P}_n,k\in\mathbb{N} \big\}$ that is formed by $\mathcal{P}_n$
and we define $|\pi_n|=\displaystyle \max_{k\geq1}\{|s_{k+1}^n-s_k^n|\}$ as the size of mesh.
Then by the definition of It\^{o} stochastic integration, $V_t^N$ is defined as the limit in probability as follows.

\begin{equation}\label{ZL1}
V_t^N= \lim_{n\to\infty}\sum_{\substack{[u,v]\in\pi_{n}\\u<t}} \frac 1{\sqrt{\varepsilon}} S_{(t-u)/\varepsilon} G_2(X_u^\varepsilon, Y_u^\varepsilon) \delta W_{v,u}1_{\{u<\tau_N^\varepsilon\}}.
\end{equation}
We can extract a subsequence of partitions (which we still denote by $\pi_{n}$ for simplicity) such that the above limit holds almost surely.

On the other hand, since $(Z^\varepsilon,\mathcal G(Z^\varepsilon)) \in \hat{\mathscr{D}}_{\mathcal S,\mathbf \Xi}^{2\gamma,2\gamma,\eta}(\mathbb{R}_+;\mathscr{H}_{-2 \gamma})$,
we conclude from Lemma \ref{funCRP} that
$(\mathcal G(Z^\varepsilon),\mathcal G(Z^\varepsilon)^\prime ) \in \hat{\mathscr{D}}_{\mathcal S,\mathbf \Xi}^{2\gamma,2\gamma,0}(\mathbb{R}_+;\mathscr{H}_{-2\gamma}^{d+m})$,
which implies that the rough integral
$$\int_{0}^{t\wedge \tau_N^\varepsilon(\omega)} \mathcal S_{t-s} \mathcal G (Z_s^\varepsilon) d \mathbf \Xi_s (\omega) $$
exists and one can verify that
$$
\tilde{V}_t^N(\omega):=\displaystyle \frac 1{\sqrt{\varepsilon}} \int_0^{t\wedge \tau_N^\varepsilon(\omega)}  S_{(t-s)/\varepsilon} G_2 (X_s^\varepsilon,Y_s^\varepsilon) d \mathbf W_s (\omega)
$$
is equal to (see (\ref{ZXInt})):
\begin{align}\label{ZL2}
 \tilde{V}_t^N=&\lim_{n\to\infty}\sum_{\substack{[u,v]\in\pi_{n}\\u<t\wedge \tau_N^\varepsilon}} \Big( \frac 1{\sqrt{\varepsilon}} S_{(t-u)/\varepsilon} G_2(X_u^\varepsilon, Y_u^\varepsilon) \delta W_{v,u}
+\frac 1{\sqrt{\varepsilon}} S_{(t-u)/\varepsilon} D_x G_2(X_u^\varepsilon, Y_u^\varepsilon) G_1(X_u^\varepsilon) I[B,W]_{v,u}  \nonumber\\
&~~~~~~~~~~~~~~~~~~+\frac 1{\varepsilon} S_{(t-u)/\varepsilon} D_y G_2(X_u^\varepsilon, Y_u^\varepsilon) G_2(X_u^\varepsilon, Y_u^\varepsilon) W_{v,u}^2 \Big).
\end{align}
By (\ref{ZL1}) and (\ref{ZL2}), it is easy to obtain that the $L^2(\Omega)$ norm of the difference between $V_t^N$ and $\tilde{V}_t^N$ is
\begin{align}\label{Zdif}
\big| V_t^N-\tilde{V}_t^N \big|_{L^2(\Omega)}=&\Bigg| \lim_{n\to\infty}\sum_{\substack{[u,v]\in\pi_{n}\\u<t\wedge \tau_N^\varepsilon}}
\Big(\frac 1{\sqrt{\varepsilon}}
S_{(t-u)/\varepsilon} D_x G_2(X_u^\varepsilon, Y_u^\varepsilon) G_1(X_u^\varepsilon) I[B,W]_{v,u}  \nonumber\\
&+\frac 1{\varepsilon} S_{(t-u)/\varepsilon} D_y G_2(X_u^\varepsilon, Y_u^\varepsilon) G_2(X_u^\varepsilon, Y_u^\varepsilon) W_{v,u}^2 \Big) \Bigg|_{L^2(\Omega)}, ~~~~\text{for any}~t>0.
\end{align}

Next, we will verify that the right hand side of (\ref{Zdif}) is equal to zero.
Firstly, we prove that
$$\Bigg| \lim_{n\to\infty}\sum_{\substack{[u,v]\in\pi_{n}\\u<t\wedge \tau_N^\varepsilon}}
S_{(t-u)/\varepsilon} D_y G_2(X_u^\varepsilon, Y_u^\varepsilon)  G_2(X_u^\varepsilon, Y_u^\varepsilon) W_{v,u}^2 \Bigg|_{L^2(\Omega)}^2=0.$$
Define a (discrete time) martingale $M^n$ started at $M_{0}^{n}=0$ and with increments
$M_{k+1}^{n}-M_{k}^{n}=S_{(t-s_k^n)/\varepsilon} D_y G_2(X_{s_k^n}^\varepsilon, Y_{s_k^n}^\varepsilon)  G_2(X_{s_k^n}^\varepsilon, Y_{s_k^n}^\varepsilon) 1_{\{s_k^n < t\wedge \tau_N^\varepsilon\} } W_{s_{k+1}^n,s_k^n}^2$.
Then by (\ref{G_2Bound}) we have
$$
\begin{aligned}
&\Bigg|\sum_{\substack{[u,v]\in\pi_n \\u<t\wedge \tau_N^\varepsilon}} S_{(t-u)/\varepsilon} D_y G_2(X_u^\varepsilon, Y_u^\varepsilon) G_2(X_u^\varepsilon, Y_u^\varepsilon) W_{v,u}^2 \Bigg|_{L^{2}(\Omega)}^{2}  \\
=&\Bigg|\sum_{k=0}^{\infty}(M_{k+1}^n-M_k^n) \Bigg|_{L^2(\Omega)}^2
=\sum_{k=0}^{\infty} \big|M_{k+1}^{n}-M_{k}^{n} \big|_{L^{2}(\Omega)}^{2}
\lesssim \hat{N}^2 \sum_{k=0}^{\infty} \big|W_{s_{k+1}^n,s_k^n}^2 1_{\{s_k^n<t\}} \big|_{L^2(\Omega)}^2
\lesssim \hat{N}^2 t|\pi_{n}|,
\end{aligned}
$$
where the last inequality is because the Brownian scaling gives $\big| W _{s_{k+1}^n,s_k^n}^2 \big| _{L^2( \Omega ) }^2 \lesssim | s_{k+1}^n- s_k^n |^2$.
We also use the fact that all the above infinite sums are finite
due to the presence of the indicator function.

Since $\bigcup _{n}\mathcal{P} _{n}\text{ is dense in } \mathbb R_+$ we have $| \pi _{n}| \to 0$ as $n\to \infty $.
Therefore, Fatou's lemma implies that
\begin{align}\label{W0}
&\Bigg| \lim_{n\to\infty}\sum_{\substack{[u,v]\in\pi_{n}\\u<t\wedge \tau_N^\varepsilon}}
S_{(t-u)/\varepsilon} D_y G_2(X_u^\varepsilon, Y_u^\varepsilon)  G_2(X_u^\varepsilon, Y_u^\varepsilon) W_{v,u}^2 \Bigg|_{L^2(\Omega)}^2 \nonumber\\
\leq & \lim_{n\to\infty} \Bigg| \sum_{\substack{[u,v]\in\pi_{n}\\u<t\wedge \tau_N^\varepsilon}}
S_{(t-u)/\varepsilon} D_y G_2(X_u^\varepsilon, Y_u^\varepsilon)  G_2(X_u^\varepsilon, Y_u^\varepsilon) W_{v,u}^2 \Bigg|_{L^2(\Omega)}^2 \lesssim  \lim_{n\to\infty} \hat{N}^2 t |\pi _{n}|=0.
\end{align}

Secondly, we prove that
$$\Bigg| \lim_{n\to\infty}\sum_{\substack{[u,v]\in\pi_{n}\\u<t\wedge \tau_N^\varepsilon}}
S_{(t-u)/\varepsilon} D_x G_2(X_u^\varepsilon, Y_u^\varepsilon)  G_1(X_u^\varepsilon) I[B,W]_{v,u} \Bigg|_{L^2(\Omega)}^2=0.$$
Define a new (discrete time) martingale $\tilde{M}^n$ started at $\tilde{M}_0^n=0$
and increments
$\tilde{M}_{k+1}^{n}-\tilde{M}_{k}^{n}=S_{(t-s_k^n)/\varepsilon} D_x G_2(X_{s_k^n}^\varepsilon, Y_{s_k^n}^\varepsilon)  G_1(X_{s_k^n}^\varepsilon) 1_{\{s_k^n < t\wedge \tau_N^\varepsilon\} } I[B,W]_{s_{k+1}^n,s_k^n}$.
Similarly, using (\ref{G_2Bound}) one has the bound
$$
\begin{aligned}
&\Bigg|\sum_{\substack{[u,v]\in\pi_n \\u<t\wedge \tau_N^\varepsilon}} S_{(t-u)/\varepsilon} D_x G_2(X_u^\varepsilon, Y_u^\varepsilon) G_1(X_u^\varepsilon) I[B,W]_{v,u} \Bigg|_{L^{2}(\Omega)}^{2}  \\
= & \Bigg|\sum_{k=0}^{\infty}(\tilde{M}_{k+1}^n-\tilde{M}_k^n) \Bigg|_{L^2(\Omega)}^2
=\sum_{k=0}^{\infty}\big |\tilde{M}_{k+1}^{n}-\tilde{M}_{k}^{n} \big|_{L^{2}(\Omega)}^{2}\\
\lesssim & \hat{N}^2 \sum_{k=0}^{\infty} \big|I[B,W]_{s_{k+1}^n,s_k^n} 1_{\{s_k^n<t\}} \big|_{L^2(\Omega)}^2
\lesssim \hat{N}^2 t|\pi_{n}|^{2\gamma} \mathbb E [\interleave \mathbf{B} \interleave_\gamma^2 ],
\end{aligned}
$$
where the fact that we have used is the following
\begin{align*}
\big|I[B,W]_{s_{k+1}^n,s_k^n}  \big|_{L^2(\Omega)}^2&=\mathbb E \Big (\int_{s_k^n}^{s_{k+1}^n} (B_u-B_{s_k^n}) \otimes d W_u \Big)^2 = \mathbb E \int_{s_k^n}^{s_{k+1}^n} (B_u-B_{s_k^n})^2  d u  \\
&\leq  \int_{s_k^n}^{s_{k+1}^n} \mathbb E [\interleave \mathbf{B} \interleave_\gamma^2] (u-s_k^n)^{2\gamma} du \lesssim  |s_{k+1}^n-s_k^n|^{1+2 \gamma} \mathbb E [\interleave \mathbf{B} \interleave_\gamma^2].
\end{align*}
By condition $(\mathbf{H5})$ we know that $\mathbb E [\interleave \mathbf{B} \interleave_\gamma^2] <\infty$, it is straightforward that
\begin{align}\label{BW0}
&\Bigg| \lim_{n\to\infty}\sum_{\substack{[u,v]\in\pi_{n}\\u<t\wedge \tau_N^\varepsilon}}
S_{(t-u)/\varepsilon} D_x G_2(X_u^\varepsilon, Y_u^\varepsilon)  G_1(X_u^\varepsilon) I[B,W]_{v,u} \Bigg|_{L^2(\Omega)}^2 \nonumber\\
\leq & \lim_{n\to\infty} \Bigg| \sum_{\substack{[u,v]\in\pi_{n}\\u<t\wedge \tau_N^\varepsilon}}
S_{(t-u)/\varepsilon} D_x G_2(X_u^\varepsilon, Y_u^\varepsilon)  G_1(X_u^\varepsilon) I[B,W]_{v,u}  \Bigg|_{L^2(\Omega)}^2 \nonumber\\
\lesssim & \lim_{n\to\infty} \hat{N}^2 t|\pi_{n}|^{2\gamma} \mathbb E [\interleave \mathbf{B} \interleave_\gamma^2]=0.
\end{align}

Finally, combining (\ref{W0}) and (\ref{BW0}) we have $V_t^N=\tilde{V}_t^N$, $\mathbb P$-a.s. for all $t>0$.
Now one can choose a continuous version of It\^{o} integral $V_t^N$ which is still equal almost surely to $\tilde{V}_t^N$ for every $t>0$.
Therefore, we have proven that for any $t(\omega)< \tau_N^\varepsilon$
\begin{equation*}
Y_t^\varepsilon= S_{t/\varepsilon} y+ \frac 1\varepsilon \int_0^{t\wedge \tau_N^\varepsilon} S_{(t-s)/\varepsilon} F_2(X_s^\varepsilon, Y_s^\varepsilon)ds
+\frac 1{\sqrt{\varepsilon}} \int_0^{t\wedge \tau_N^\varepsilon} S_{(t-s)/\varepsilon} G_2(X_s^\varepsilon, Y_s^\varepsilon) dW_s, ~~~~\mathbb P\text{-a.s.}.
\end{equation*}

We know that there is a unique global solution $(Z^\varepsilon,\mathcal G(Z^\varepsilon)) \in \mathcal D_{\mathbf \Xi}^{2\gamma ,\eta}([0,T];\mathscr{H})$ to Eq. (\ref{eq2}) on the time interval $[0,T], 0<T<\infty$, which implies that Eq. (\ref{eq2}) will not blow up in finite time.
More precisely, $\tau_N^\varepsilon \to \infty , ~\mathbb P$-a.s. as $N\to\infty$.
Therefore, whenever $\|Z_t^\varepsilon\|_{\mathscr H}$ is finite we can always restart the Eq. (\ref{eq2}) with initial condition $Z_t^\varepsilon$ and extend the solution further in time.
Moreover, $\tau_N^\varepsilon$ clearly increases as $N$ increases,
thus showing that the component $(Y^\varepsilon, G_2(X^\varepsilon, Y^\varepsilon)) \in \mathcal D_{\mathbf W}^{2\gamma ,\eta}([0,T];\mathcal{H})$ of $Z^\varepsilon$ is the solution to the following It\^{o} SPDE on $[0,T]$,
$$Y_t^\varepsilon= S_{t/\varepsilon} y+ \frac 1\varepsilon \int_0^{t} S_{(t-s)/\varepsilon} F_2(X_s^\varepsilon, Y_s^\varepsilon)ds
+\frac 1{\sqrt{\varepsilon}} \int_0^{t} S_{(t-s)/\varepsilon} G_2(X_s^\varepsilon, Y_s^\varepsilon) dW_s.$$
The proof is complete.
\hfill $\square$
\end{proof}

\section{Proof of averaging principle}\label{Proof}

In this section, we intend to prove the main result, namely Theorem \ref{MainRes}.
Some necessary a priori estimates are given before a concrete proof is given.
Furthermore, inspired by the work \cite{K68} of Khasminskii, the time discretization scheme is used in the proof of the averaging principle.
For this purpose, we construct an auxiliary process $\hat{Y}^\varepsilon$ and give an estimate of the error between the solution $Y^\varepsilon$ to Eq. (\ref{eqY}) and the auxiliary process $\hat{Y}^\varepsilon$, which plays a key role in the main proof.
Throughout this section, as before, $1/3< \gamma_0 \leq 1/2$.

\subsection{Some a priori estimates}\label{XYest}

We now give estimates of the solution $(X_t^\varepsilon, Y_t^\varepsilon)$ to system (\ref{eq1}), which will be used frequently in the sequel.

\begin{lemma}\label{avlemma1}
Assume that the conditions in Theorem $\ref{MainRes}$ hold.  Then for any $(x,y) \in \mathcal H \times \mathcal H$ and $\varepsilon >0$ small enough,
there exists a constant $C_T> 0$ such that
	\begin{equation}\label{Xest}
     \mathbb{E}\big[\|X^{\varepsilon}\|_{\eta ,0}^4 \big] \leq  C_T (1+\|x\|_{\mathcal H}^4+\|y\|_{\mathcal H}^4),
     \end{equation}
and
   \begin{equation}\label{Yest}
    \sup_{t\in[0,T]}\mathbb{E}\big[\|Y_{t}^{\varepsilon}\|_{\mathcal H}^4 \big]
    \leq  C_T(1+\|x\|_{\mathcal H}^4+\|y\|_{\mathcal H}^4).
	\end{equation}
\end{lemma}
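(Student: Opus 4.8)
The plan is to estimate the two components separately, treating the fast component first since its bound feeds into the slow one. For $Y^\varepsilon$, I would work with the It\^o mild formulation \eqref{eqY} established in Lemma \ref{RSPDE}, which avoids the rough machinery entirely. Applying the semigroup bound \eqref{semi} (here with $\beta=\alpha=0$, so $\|S_{t/\varepsilon}\|_{\mathcal L(\mathcal H)}\lesssim 1$ up to the exponential decay $e^{-\lambda_1 t/\varepsilon}$ coming from $(\mathbf H3)$), together with the linear growth of $F_2$ and $G_2$ that follows from the Lipschitz bounds \eqref{F2Lip} and \eqref{Glip}, and using the Burkholder--Davis--Gundy inequality on the stochastic convolution, I would derive a Gr\"onwall-type inequality for $\sup_{t}\mathbb E[\|Y_t^\varepsilon\|_{\mathcal H}^4]$. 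The dissipativity assumption $(\mathbf H4)$, $\lambda_1-L_{F_2}-3L_{G_2}^2>0$, is exactly what is needed to absorb the bad terms and close the estimate \emph{uniformly in} $\varepsilon$ and $t$; the factors $1/\varepsilon$ and $1/\sqrt\varepsilon$ in front of the drift and diffusion are compensated by the rescaled semigroup $S_{(t-s)/\varepsilon}$ via the elementary identities $\frac1\varepsilon\int_0^t e^{-\lambda(t-s)/\varepsilon}\,ds\le \lambda^{-1}$ and $\frac1\varepsilon\int_0^t e^{-2\lambda(t-s)/\varepsilon}\,ds\le (2\lambda)^{-1}$. This yields \eqref{Yest}.

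For $X^\varepsilon$, I would use the mild/controlled-rough-path formulation from Lemma \ref{wellpose}: $X_t^\varepsilon=S_tx+\int_0^tS_{t-s}F_1(X_s^\varepsilon,Y_s^\varepsilon)\,ds+\int_0^tS_{t-s}G_1(X_s^\varepsilon)\,d\mathbf B_s$, with Gubinelli derivative $(X^\varepsilon)'=G_1(X^\varepsilon)$. The drift term is easy: $\|F_1\|_\infty<\infty$ by \eqref{F1Bound}, and $\|\int_0^tS_{t-s}F_1(X_s^\varepsilon,Y_s^\varepsilon)\,ds\|_{\eta,0}$ is bounded by a constant depending on $T$ using \eqref{semi}. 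For the initial term, $x\in\mathcal H_\eta$ gives $\|S_\cdot x\|_{\eta,0}\lesssim\|x\|_{\mathcal H_\eta}$. The rough integral is handled by the continuity of the rough integration map in Theorem \ref{RoughInt} combined with the a priori estimate for $\|Z,Z'\|_{X,2\gamma,\alpha}$ with $(Z,Z')=(G_1(X^\varepsilon),DG_1(X^\varepsilon)\circ(X^\varepsilon)')$ from Lemma \ref{funCRP}; since $G_1\in C^3_{-2\gamma,0}(\mathcal H;\mathcal H^d)$ has bounded derivatives, Lemma \ref{funCRP} bounds this in terms of $(1+|B|_\gamma)^2(1+\|X^\varepsilon,(X^\varepsilon)'\|_{\mathcal D^{2\gamma,\eta}_{\mathbf B}})^2$. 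Assembling these via the fixed-point / contraction structure underlying Lemma \ref{wellpose} (on small time intervals whose length depends only on $\interleave\mathbf B\interleave_\gamma$, then concatenating over $[0,T]$) gives a pathwise bound $\|X^\varepsilon\|_{\eta,0}\le C_T\,\Phi(\interleave\mathbf B\interleave_\gamma)(1+\|x\|_{\mathcal H_\eta}+\sup_{t\le T}\|Y_t^\varepsilon\|_{\mathcal H})$ for some polynomial $\Phi$. Taking fourth moments, using the Cauchy--Schwarz inequality to split the product of $\Phi(\interleave\mathbf B\interleave_\gamma)$ and the $Y^\varepsilon$-term, invoking $(\mathbf H5)$ (all moments of $\interleave\mathbf B\interleave_\gamma$ are finite) and the just-proved \eqref{Yest}, yields \eqref{Xest}.

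The main obstacle I anticipate is the bookkeeping in the pathwise estimate for $X^\varepsilon$: the contraction argument behind Lemma \ref{wellpose} only gives a bound on a random time interval of length $\sim(1+\interleave\mathbf B\interleave_\gamma)^{-c}$, so turning this into a bound on the fixed interval $[0,T]$ requires a concatenation over $\sim\interleave\mathbf B\interleave_\gamma^{c}$ sub-intervals, and one must track how the constant grows with the number of steps --- typically it grows at most polynomially in $\interleave\mathbf B\interleave_\gamma$, which is harmless after taking expectations thanks to $(\mathbf H5)$, but it has to be done carefully so that the growth in the number of pieces does not become exponential. A secondary technical point is that $F_1$ and $G_1$ only map into $\mathcal H$ after losing $2\gamma$ derivatives, so one has to keep the indices in the interpolation scale consistent (the solution lives in $\mathcal D^{2\gamma,\eta}_{\mathbf B}([0,T];\mathcal H)=\mathscr D^{2\gamma,2\gamma,\eta}_{S,\mathbf B}([0,T];\mathcal H_{-2\gamma})$), and the smoothing of the semigroup in \eqref{semi} must be spent appropriately to land back in $\hat C^\eta([0,T];\mathcal H)$ with $\eta\in(\gamma-1/4,\gamma)$; but this is exactly the regularity trade-off already encoded in Theorem \ref{RoughInt} and Lemma \ref{funCRP}, so no new idea is needed.
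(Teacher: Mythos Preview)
Your proposal contains the right ingredients but the \emph{order of the two estimates is reversed}, and this creates a genuine circularity. You plan to prove \eqref{Yest} first and then feed it into \eqref{Xest}. But the Gr\"onwall/comparison argument for $Y^\varepsilon$ --- whether via the mild formulation with BDG or via It\^o's formula as the paper does --- necessarily produces a term $\mathbb E\|X_t^\varepsilon\|_{\mathcal H}^4$ on the right-hand side (because $F_2,G_2$ have linear growth in $x$ by \eqref{F2Lip}, \eqref{Glip}), and you have no bound on this yet. Conversely, \eqref{Xest} does \emph{not} need \eqref{Yest} at all: you yourself note that $\|F_1\|_\infty<\infty$ by \eqref{F1Bound}, and $G_1$ is independent of $Y^\varepsilon$, so the pathwise rough-path estimate for $X^\varepsilon$ closes with a bound of the form $\|X^\varepsilon\|_{\eta,0}\le C_T\,\Phi(\interleave\mathbf B\interleave_\gamma)(1+\|x\|_{\mathcal H})$ --- the $\sup_{t}\|Y_t^\varepsilon\|_{\mathcal H}$ term you include is spurious. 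The paper proceeds in exactly this order: \eqref{Xest} first (deferred to \cite[Lemma 2.6]{MG23}, which is your fixed-point/concatenation argument), then \eqref{Yest} via It\^o's formula for $\|Y_t^\varepsilon\|_{\mathcal H}^4$, using $(\mathbf H4)$ for dissipativity and plugging in the already-established \eqref{Xest} at the last step.

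There is also a secondary gap in your plan for \eqref{Xest} as written: even if \eqref{Yest} were available first, it gives $\sup_t\mathbb E[\|Y_t^\varepsilon\|^4]$, whereas your pathwise bound followed by taking expectations would require $\mathbb E[\sup_t\|Y_t^\varepsilon\|^4]$, and the sup and expectation cannot be swapped for free. This issue evaporates once you drop the unnecessary $Y^\varepsilon$-dependence and reverse the order. Apart from the ordering, your analysis of the rough integral via Theorem~\ref{RoughInt} and Lemma~\ref{funCRP}, the concatenation bookkeeping, and the use of $(\mathbf H5)$ to control moments of $\interleave\mathbf B\interleave_\gamma$ are all correct and match the paper's (and \cite{MG23}'s) strategy; for \eqref{Yest} your mild-formulation-plus-BDG route is a legitimate alternative to the paper's direct It\^o-formula computation.
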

\begin{proof}
The proof for estimate (\ref{Xest}) is similar to that in \cite[Lemma 2.6]{MG23}, but we will not go into details here.

Applying It\^{o}'s formula to  $\|Y_t^\varepsilon\|_{\mathcal H}^4$ (cf. \cite[Theorem 4.2.5]{LR15}), we have
    \begin{align*}
	\|Y_t^{\varepsilon}\|_{\mathcal H}^4
    =&\|y\|_{\mathcal H}^4
	+\frac{4}{\varepsilon} \int_{0}^t \|Y_{s}^{\varepsilon}\|_{\mathcal H}^2
   \langle A Y_s^\varepsilon, Y_{s}^{\varepsilon} \rangle_{\mathcal H} ds
+\frac{4}{\varepsilon} \int_{0}^t \|Y_{s}^{\varepsilon}\|_{\mathcal H}^2
    \langle F_2 (X_s^\varepsilon, Y_s^\varepsilon) , Y_{s}^{\varepsilon} \rangle_{\mathcal H} ds
	\nonumber\\
    &+\frac{2}{\varepsilon}\int_{0}^t \|Y_{s}^{\varepsilon}\|_{\mathcal H}^2
    \|G_2(X_{s}^{\varepsilon},Y_{s}^{\varepsilon})\|_{\mathcal H^m}^2ds+\frac{4}{\varepsilon}\int_{0}^t
    \|G_2(X_{s}^{\varepsilon},Y_{s}^{\varepsilon})^*Y_{s}^{\varepsilon}\|_{\mathbb R^m}^2ds
	\nonumber\\
	&+\frac{4}{\sqrt{\varepsilon}}\int_{0}^t
    \|Y_{s}^{\varepsilon}\|_{\mathcal H}^2 \langle Y_{s}^{\varepsilon},
    G_2( X_{s}^{\varepsilon},Y_{s}^{\varepsilon})dW_s \rangle_{\mathcal H} .
    \end{align*}
Taking the expectation for the above equation and using (\ref{F2Lip}), (\ref{Glip}) and $(\mathbf{H3})$, we obtain
\begin{align*}
\frac d{dt} \mathbb E[\|Y_t^\varepsilon\|_{\mathcal H}^4] &\leq  \frac{4}{\varepsilon} \mathbb E \big[ \|Y_t^\varepsilon\|_{\mathcal H}^2
    \langle A Y_t^\varepsilon, Y_t^\varepsilon \rangle_{\mathcal H} \big]+\frac{4}{\varepsilon} \mathbb E \big[ \|Y_t^\varepsilon\|_{\mathcal H}^2
    \langle F_2 (X_t^\varepsilon, Y_t^\varepsilon) , Y_t^\varepsilon \rangle_{\mathcal H} \big]
	\nonumber\\
    &~~~+\frac{6}{\varepsilon}\mathbb E \big[ \|Y_t^\varepsilon\|_{\mathcal H}^2
    \|G_2(X_t^\varepsilon,Y_t^\varepsilon)\|_{\mathcal H^m}^2 \big] \nonumber\\
    & \leq  -\frac{4\lambda_1}{\varepsilon} \mathbb E \|Y_t^\varepsilon\|_{\mathcal H}^4
+\frac{4}{\varepsilon} \mathbb E \big[ \|Y_t^\varepsilon\|_{\mathcal H}^3
    \big( C+C\|X_t^\varepsilon\|_{\mathcal H} +L_{F_2} \|Y_t^\varepsilon\|_{\mathcal H} \big) \big]
	\nonumber\\
    &~~~+\frac{6}{\varepsilon}\mathbb E \big[ \|Y_t^\varepsilon\|_{\mathcal H}^2
     \big( C+C\|X_t^\varepsilon\|_{\mathcal H}^2 +2L_{G_2}^2 \|Y_t^\varepsilon\|_{\mathcal H}^2 \big)  \big] \nonumber\\
    & \leq -\frac {4(\lambda_1- L_{F_2}-3 L_{G_2}^2)-\kappa}\varepsilon \mathbb E \|Y_t^\varepsilon \|_{\mathcal H}^4+\frac C\varepsilon (1+\mathbb E \|X_t^\varepsilon \|_{\mathcal H}^4),
\end{align*}
where we used Young's inequality for a constant $\kappa$ satisfying $4(\lambda_1- L_{F_2}-3 L_{G_2}^2)-\kappa>0$.
Using the comparison theorem we have
\begin{align*}
\mathbb E[\|Y_t^\varepsilon\|_{\mathcal H}^4] \leq &   \|y\|_{\mathcal H}^4 e^{-\frac {4(\lambda_1- L_{F_2}-3 L_{G_2}^2)-\kappa}\varepsilon  t}+\frac C\varepsilon \int_0^t e^{-\frac {4(\lambda_1- L_{F_2}-3 L_{G_2}^2)-\kappa}\varepsilon  (t-s)} (1+\mathbb E \|X_s^\varepsilon \|_{\mathcal H}^4) ds\\
    \leq & \|y\|_{\mathcal H}^4 e^{-\frac {4(\lambda_1- L_{F_2}-3 L_{G_2}^2)-\kappa}\varepsilon  t}\!+\frac C\varepsilon \int_0^t \!e^{-\frac {4(\lambda_1- L_{F_2}-3 L_{G_2}^2)-\kappa}\varepsilon  (t-s)} (1\!+\!\|S_s x\|_{\mathcal H}^4\!+ \mathbb E \|X_s^\varepsilon \!-\!S_s x\|_{\mathcal H}^4) ds \nonumber\\
    \leq & \|y\|_{\mathcal H}^4 e^{-\frac {4(\lambda_1- L_{F_2}-3 L_{G_2}^2)-\kappa}\varepsilon  t}+\frac C\varepsilon \int_0^t e^{-\frac {4(\lambda_1- L_{F_2}-3 L_{G_2}^2)-\kappa}\varepsilon  (t-s)} (1+\|x\|_{\mathcal H}^4+T^{4\eta} \mathbb E \|X^\varepsilon \|_{\eta,0}^4) ds \nonumber\\
    \leq & C_T (1+\|x\|_{\mathcal H}^4+\|y\|_{\mathcal H}^4),
\end{align*}	
where in third inequality, we used the contractive property of the semigroup $(S_t)_{t \geq 0}$, and in the last inequality, we used the estimate (\ref{Xest}).
Hence, estimate (\ref{Yest}) holds.
The proof is complete.
\hfill $\square$

\end{proof}

For the next two lemmas we assume that there exists $M>0$ such that $|B|_\gamma, |B^2|_{2\gamma}$ and $\|X^\varepsilon, G_1(X^\varepsilon)\|_{\mathcal{D}_{\mathbf B}^{2\gamma,\eta}} <M$, $\mathbb P$-a.s..

\begin{lemma}\label{Xetaest}
Assume that conditions $(\mathbf{H1})$ and $(\mathbf{H2})$ hold.
For each $x \in \mathcal{H}_\eta$ and $y \in \mathcal{H}$ with $\eta \in [0,\gamma)$, $\gamma \in (1/3, \gamma_0)$ and $T>0$, there exists a constant $C_{M,T}>0$ such that
\begin{equation*}
\sup_{\varepsilon \in (0,1)} \mathbb E \Big[ \sup_{t \in [0,T]} \|X_t^\varepsilon\|_{\mathcal{H}_\eta}^4 \Big]
\leq C_{M,T} (1+\|x\|_{\mathcal{H}_\eta}^4+\|y\|_{\mathcal{H}}^4 ) .
\end{equation*}

\end{lemma}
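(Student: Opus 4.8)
\textbf{Proof proposal for Lemma \ref{Xetaest}.}

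The plan is to start from the mild formulation of the slow component,
$$
X_t^\varepsilon = S_t x + \int_0^t S_{t-s} F_1(X_s^\varepsilon, Y_s^\varepsilon)\,ds + \int_0^t S_{t-s} G_1(X_s^\varepsilon)\,d\mathbf B_s,
$$
and estimate each of the three terms in the $\mathcal H_\eta$-norm, then take the supremum over $t\in[0,T]$, raise to the fourth power and take expectations. For the first term, since $x\in\mathcal H_\eta$ and $S_t$ is a contraction on every interpolation space, $\|S_t x\|_{\mathcal H_\eta}\le\|x\|_{\mathcal H_\eta}$. For the drift term, I would use the smoothing estimate $(\ref{semigroup})$: writing $\|S_{t-s}F_1(X_s^\varepsilon,Y_s^\varepsilon)\|_{\mathcal H_\eta}\lesssim (t-s)^{-\eta}\|F_1(X_s^\varepsilon,Y_s^\varepsilon)\|_{\mathcal H}$, and since $F_1$ is bounded by $(\ref{F1Bound})$, the integral is bounded by $\|F_1\|_\infty\int_0^t(t-s)^{-\eta}\,ds\lesssim T^{1-\eta}$ because $\eta<\gamma<1/2<1$; this is a deterministic bound. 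The rough-integral term is the main point: here I would invoke Theorem \ref{RoughInt} with $\alpha=-2\gamma$ and $\beta=\eta+2\gamma<3\gamma$. From $(\ref{IntBound})$ applied on $[0,t]$ (with $s=0$), together with the bounds on the local terms $S_t G_1(x)\delta B_{t,0}$ and $S_t (G_1(x))' B^2_{t,0}$ which land in $\mathcal H_\eta$ after using $(\ref{semigroup})$ to absorb the negative regularity of $G_1(X_0^\varepsilon)=G_1(x)\in\mathcal H_{-2\gamma}$, I can bound $\|X_t^\varepsilon\|_{\mathcal H_\eta}$ in terms of $|B|_\gamma$, $|B^2|_{2\gamma}$, and the controlled-rough-path norm of $(G_1(X^\varepsilon), (G_1(X^\varepsilon))')$. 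The latter is controlled by Lemma \ref{funCRP} in terms of $(1+|B|_\gamma)^2$ and the norm $\|X^\varepsilon,G_1(X^\varepsilon)\|_{\mathcal D_{\mathbf B}^{2\gamma,\eta}}$, which is $\le M$ by hypothesis; similarly $|B|_\gamma,|B^2|_{2\gamma}<M$ by hypothesis. This yields a pathwise bound $\sup_{t\in[0,T]}\|X_t^\varepsilon\|_{\mathcal H_\eta}\le C_{M,T}(1+\|x\|_{\mathcal H_\eta})$ plus a contribution from the local term involving $G_1(X_0^\varepsilon)=G_1(x)$, which is bounded since $G_1$ is bounded on $\mathcal H$.

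A subtlety is that the quantity $\|X^\varepsilon,G_1(X^\varepsilon)\|_{\mathcal D_{\mathbf B}^{2\gamma,\eta}}$, being bounded by $M$ $\mathbb P$-a.s., already controls $\|X^\varepsilon\|_{\eta,0}$ via $(\ref{Y})$, but we additionally need genuine $\mathcal H_\eta$-valued boundedness of the supremum — i.e.\ the $\hat C^\eta$ bound measures increments with the semigroup, not the pointwise size. The pointwise size is recovered by combining $\|X_t^\varepsilon\|_{\mathcal H_\eta}\le\|\hat\delta X_{t,0}^\varepsilon\|_{\mathcal H_\eta}+\|S_t x\|_{\mathcal H_\eta}\le\|X^\varepsilon\|_{\eta,0}T^\eta+\|x\|_{\mathcal H_\eta}$, so in fact the desired supremum bound follows almost directly from the hypothesis $\|X^\varepsilon,G_1(X^\varepsilon)\|_{\mathcal D_{\mathbf B}^{2\gamma,\eta}}<M$ together with $x\in\mathcal H_\eta$, and then $\mathbb E[\sup_t\|X_t^\varepsilon\|_{\mathcal H_\eta}^4]\le C_{M,T}(1+\|x\|_{\mathcal H_\eta}^4)$. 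The dependence on $\|y\|_{\mathcal H}$ enters only through the bound $M$ which, in the intended application, will have been chosen (via a stopping-time / localization argument in the main proof) to dominate the relevant norms uniformly; alternatively, if one does not localize $\|X^\varepsilon\|_{\mathcal D_{\mathbf B}^{2\gamma,\eta}}$, then one reruns the argument above using Theorem \ref{RoughInt} and Lemma \ref{funCRP} and closes a Gronwall-type inequality over subintervals of length $\le 1$, picking up $\|y\|_{\mathcal H}$ through $F_1$ being evaluated at $Y^\varepsilon$ — but since $F_1$ is bounded, in fact no $y$-dependence is strictly necessary and the term $\|y\|_{\mathcal H}^4$ on the right is kept only for uniformity with the other a priori estimates.

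I expect the main obstacle to be bookkeeping the interpolation exponents and the negative-regularity gymnastics: $G_1$ maps $\mathcal H$ into $\mathcal H^d=\mathcal L(\mathbb R^d;\mathcal H)$ but the controlled-rough-path framework is set up at level $\mathcal H_{-2\gamma}$, so one must carefully track where $2\gamma$ of smoothing is spent recovering $\mathcal H$-regularity and where the remaining budget $3\gamma-\beta$ with $\beta=\eta+2\gamma$ allows landing in $\mathcal H_\eta$; this requires $\eta<\gamma$, which is exactly the standing assumption. Once the exponent arithmetic is in place, everything reduces to applying $(\ref{semigroup})$, $(\ref{IntBound})$, Lemma \ref{funCRp}-type bounds, and the $\mathbb P$-a.s.\ hypothesis on $M$, followed by a trivial fourth-power-and-expectation step, so no probabilistic estimates beyond the bounded-$M$ assumption are actually needed in this lemma.
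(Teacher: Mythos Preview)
Your main outline (first paragraph) is correct and coincides with the paper's proof: the paper also splits into the three mild-solution terms, bounds the initial term by contraction, bounds the rough integral via (\ref{IntBound}) at $\alpha=-2\gamma$, $\beta=\eta+2\gamma$ together with explicit estimates on $|R^{G_1(X^\varepsilon)}|_{2\gamma,-2\gamma}$ and $\|DG_1(X^\varepsilon)G_1(X^\varepsilon)\|_{\gamma,-2\gamma}$ (essentially the content of Lemma \ref{funCRP}), and then uses the almost-sure bound $M$. One difference: for the drift, the paper uses only the Lipschitz/linear-growth property (\ref{F1Lip}) and then invokes the a priori bounds (\ref{Xest}) and (\ref{Yest}) on $X^\varepsilon$ and $Y^\varepsilon$ from Lemma \ref{avlemma1}, which is precisely where the $\|y\|_{\mathcal H}^4$ term originates. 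Your route via the uniform bound $\|F_1\|_\infty<\infty$ from (\ref{F1Bound}) is a legitimate simplification and indeed shows that the $\|y\|_{\mathcal H}^4$ on the right-hand side is not strictly necessary.

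However, the shortcut in your second paragraph is flawed: the inequality $\|\hat\delta X_{t,0}^\varepsilon\|_{\mathcal H_\eta}\le\|X^\varepsilon\|_{\eta,0}\,T^\eta$ is false, because in the paper's notation $\|X^\varepsilon\|_{\eta,0}$ is the $\hat C^\eta$ seminorm with values in $\mathcal H_0=\mathcal H$, not in $\mathcal H_\eta$, and the embedding $\mathcal H_\eta\hookrightarrow\mathcal H$ goes the wrong way for what you need. The standing hypothesis $\|X^\varepsilon,G_1(X^\varepsilon)\|_{\mathcal D_{\mathbf B}^{2\gamma,\eta}}<M$ controls $\|X^\varepsilon\|_{\eta,0}$ (increments measured in $\mathcal H$), but to obtain pointwise $\mathcal H_\eta$-bounds you genuinely must go back to the mild formulation and spend the semigroup smoothing to gain $\eta$ spatial regularity, exactly as in your first paragraph and as the paper does. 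So drop the shortcut and keep your first argument; the exponent bookkeeping you anticipate in the last paragraph is indeed the only real work.
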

\begin{proof}
Recall that
$$ X_t^\varepsilon=S_t x+ \int_0^t S_{t-s} F_1 (X_s^\varepsilon,Y_s^\varepsilon)ds+ \int_0^t S_{t-s} G_1 (X_s^\varepsilon) d\mathbf{B}_s. $$
Clearly then,
\begin{align*}
\|X_t^\varepsilon\|_{\mathcal{H}_\eta}^4
\leq C\|S_t x\|_{\mathcal{H}_\eta}^4+ C\Big\| \int_0^t S_{t-s} F_1 (X_s^\varepsilon,Y_s^\varepsilon)ds \Big\|_{\mathcal{H}_\eta}^4
+ C\Big\| \int_0^t S_{t-s} G_1 (X_s^\varepsilon) d\mathbf{B}_s \Big\|_{\mathcal{H}_\eta}^4.
\end{align*}
By the contractive property of the semigroup $(S_t)_{t \geq 0}$, it is clear that
\begin{equation} \label{initest}
\|S_t x\|_{\mathcal{H}_\eta}^4 \leq \|x\|_{\mathcal{H}_\eta}^4.
\end{equation}
For $\Big\| \displaystyle\int_0^t S_{t-s} F_1 (X_s^\varepsilon,Y_s^\varepsilon)ds \Big\|_{\mathcal{H}_\eta}^4$,
using (\ref{semigroup}), (\ref{F1Lip}) and H\"{o}lder's inequality yields that
\begin{align*}
\Big\| \int_0^t S_{t-s} F_1 (X_s^\varepsilon,Y_s^\varepsilon)ds \Big\|_{\mathcal{H}_\eta}^4 & \lesssim  \Big( \int_0^t |t-s|^{-\eta} \|F_1 (X_s^\varepsilon,Y_s^\varepsilon)\|_{\mathcal{H}} ds \Big)^4 \nonumber\\
& \lesssim \Big( \int_0^t |t-s|^{- \frac 43 \eta} ds \Big)^3 \Big( \int_0^t \|F_1 (X_s^\varepsilon,Y_s^\varepsilon)\|_{\mathcal{H}}^4 ds \Big) \nonumber\\
& \leq  C_T \int_0^t (1+\|X_s^\varepsilon\|_{\mathcal{H}}^4+\|Y_s^\varepsilon\|_{\mathcal{H}}^4) ds \nonumber\\
& \leq  C_T \int_0^t (1+\|S_s x\|_{\mathcal{H}}^4+\|X_s^\varepsilon-S_s x\|_{\mathcal{H}}^4+\|Y_s^\varepsilon\|_{\mathcal{H}}^4) ds \nonumber\\
& \leq  C_T \int_0^t (1+\| x\|_{\mathcal{H}}^4+\|X^\varepsilon\|_{\eta,0}^4 s^{4\eta}+\|Y_s^\varepsilon\|_{\mathcal{H}}^4) ds.
\end{align*}
Thus, using estimates (\ref{Xest}) and (\ref{Yest}), we obtain
\begin{align}\label{drifest}
\mathbb E \Big[\sup_{t\in [0,T]} \Big\| \int_0^t S_{t-s} F_1 (X_s^\varepsilon,Y_s^\varepsilon)ds \Big\|_{\mathcal{H}_\eta}^4 \Big] & \leq  C_T \mathbb E \int_0^T (1+\| x\|_{\mathcal{H}}^4+\|X^\varepsilon\|_{\eta,0}^4 s^{4\eta}+\|Y_s^\varepsilon\|_{\mathcal{H}}^4) ds \nonumber\\
& \leq  C_T (1+\|x\|_{\mathcal H}^4+\|y\|_{\mathcal H}^4).
\end{align}
For $\Big\| \displaystyle\int_0^t S_{t-s} G_1 (X_s^\varepsilon) d\mathbf{B}_s \Big\|_{\mathcal{H}_\eta}$,
using (\ref{IntBound}) and $(\mathbf{H2})$ we have
\begin{align}\label{difest}
\Big\| \displaystyle\int_0^t S_{t-s} G_1 (X_s^\varepsilon) d\mathbf{B}_s \Big\|_{\mathcal{H}_\eta} &\leq  \Big\| \displaystyle\int_0^t S_{t-s} G_1 (X_s^\varepsilon) d\mathbf{B}_s-S_t G_1(x) \delta B_{t,0}-S_t DG_1(x)G_1(x) B_{t,0}^2 \Big\|_{\mathcal{H}_\eta} \nonumber\\
&~~~+\|S_t G_1(x) \delta B_{t,0}\|_{\mathcal{H}_\eta}
+\|S_t DG_1(x)G_1(x) B_{t,0}^2\|_{\mathcal{H}_\eta} \nonumber\\
&\lesssim  \big(|R^{G_1(X^\varepsilon)} |_{2\gamma, -2\gamma} |B|_\gamma+ \|DG_1(X^\varepsilon)G_1(X^\varepsilon)\|_{\gamma, -2\gamma} |B^2|_{2\gamma} \big)|t|^{\gamma-\eta} \nonumber\\
&~~~+t^\gamma |B|_\gamma \|G_1(x)\|_{\mathcal{H}_\eta^d}+t^{2\gamma}|B^2|_{2\gamma} \|DG_1(x)G_1(x)\|_{\mathcal{H}_\eta^{d \times d}} \nonumber\\
&\lesssim  T^{\gamma-\eta} \big(| R^{G_1(X^\varepsilon)} |_{2\gamma, -2\gamma} |B|_\gamma+ \|DG_1(X^\varepsilon)G_1(X^\varepsilon)\|_{\gamma, -2\gamma} |B^2|_{2\gamma} \big) \nonumber\\
&~~~+\big(T^\gamma |B|_\gamma+T^{2\gamma}|B^2|_{2\gamma} \big) (1+\|x\|_{\mathcal{H}_\eta}).
\end{align}

Below,
we will estimate $|R^{G_1(X^\varepsilon)} |_{2\gamma, -2\gamma}$ and $\|DG_1(X^\varepsilon)G_1(X^\varepsilon)\|_{\gamma, -2\gamma}$ in (\ref{difest}) respectively.\\
\textbf{Bound of $DG_1(X^\varepsilon)G_1(X^\varepsilon)$}:
We simply write $Z^\prime:=DG_1(X^\varepsilon)G_1(X^\varepsilon)$ and rewrite $\hat{\delta}Z^{\prime}$ as
$$
\begin{aligned}
(\hat{\delta}Z^{\prime})_{t,s}&=\big(DG_1(X_t^\varepsilon)G_1(X_t^\varepsilon)-DG_1(X_t^\varepsilon)S_{t-s}G_1(X_s^\varepsilon) \big) \\
&~~~+\big(DG_1(X_t^\varepsilon)S_{t-s}G_1(X_s^\varepsilon)-DG_1(X_s^\varepsilon)S_{t-s}G_1(X_s^\varepsilon) \big)  \\
&~~~+\big(DG_1(X_s^\varepsilon)S_{t-s}G_1(X_s^\varepsilon)-DG_1(X_s^\varepsilon)G_1(X_s^\varepsilon) \big) \\
&~~~+\big(DG_1(X_s^\varepsilon)G_1(X_s^\varepsilon)-S_{t-s}DG_1(X_s^\varepsilon)G_1(X_s^\varepsilon) \big)  \\
&=:I_1+I_2+I_3+I_4.
\end{aligned}
$$
Using (\ref{semigroup}) and the boundedness of $G_1$ and its derivatives, we can bound these terms as follows:
\begin{align*}
&\|I_1\|_{\mathcal{H}_{-2\gamma}^{d \times d}}
\leq\|DG_1(X_t^\varepsilon)\|_{\mathcal L(\mathcal{H}_{-2\gamma};\mathcal{H}_{-2\gamma}^d)}\|G_1(X^\varepsilon)\|_{\gamma,-2\gamma}|t-s|^{\gamma}
\leq C\|G_1(X^\varepsilon)\|_{\gamma,-2\gamma}|t-s|^{\gamma},   \\
&\|I_2\|_{\mathcal{H}_{-2\gamma}^{d \times d}}\leq\|DG_1(X_t^\varepsilon)-DG_1(X_s^\varepsilon)\|_{\mathcal L(\mathcal{H}_{-2\gamma};\mathcal{H}_{-2\gamma}^d)}\|G_1(X_s^\varepsilon)\|_{\mathcal{H}_{-2\gamma}^d}  \nonumber\\
&~~~~~~~~~~~~\leq C|X^\varepsilon|_{\gamma,-2\gamma}|t-s|^{\gamma}\|G_1(X^\varepsilon)\|_{\infty,0}  \nonumber\\
&~~~~~~~~~~~~\leq C \big(\|X^\varepsilon\|_{\gamma,-2\gamma}+T^{\gamma+\eta}\|X^\varepsilon\|_{\eta,0}+ T^\gamma \|x\|_{\mathcal{H}} \big)\|G_1(X^\varepsilon)\|_{\infty,0}|t-s|^{\gamma}  \nonumber\\
&~~~~~~~~~~~~\leq C_T \big(\|X^\varepsilon\|_{\gamma,-2\gamma}+\|X^\varepsilon\|_{\eta,0}+  \|x\|_{\mathcal{H}} \big)\|G_1(X^\varepsilon)\|_{\infty,0}|t-s|^{\gamma} , \\
&\|I_3\|_{\mathcal{H}_{-2\gamma}^{d \times d}}
\leq \|DG_1(X_s^\varepsilon)\|_{\mathcal L(\mathcal{H}_{-2\gamma};\mathcal{H}_{-2\gamma}^d)} \|(S_{t-s}-Id)G_1(X_s^\varepsilon)\|_{\mathcal{H}_{-2\gamma}^d} \nonumber\\
&~~~~~~~~~~~~\lesssim C \|G_1(X_s^\varepsilon)\|_{\mathcal{H}^d} |t-s|^{2\gamma}
\leq C_T\|G_1(X^\varepsilon)\|_{\infty,0}|t-s|^{\gamma},  \\
&\|I_4\|_{\mathcal{H}_{-2\gamma}^{d \times d}}\lesssim \|DG_1(X_s^\varepsilon)G_1(X_s^\varepsilon)\|_{\mathcal{H}^{d \times d}}|t-s|^{2\gamma}
\leq C_T \|G_1(X^\varepsilon)\|_{\infty,0}|t-s|^{\gamma},
\end{align*}
where the constant $C$ depends on the bounds of $G_1$ and its derivatives and also on $T$.
We also used estimates $|X^\varepsilon|_{\gamma, -2\gamma} \leq \|X^\varepsilon\|_{\gamma, -2\gamma}+T^\gamma \|X^\varepsilon\|_{\infty,0}$
and $\|X^\varepsilon\|_{\infty,0} \leq \|X^\varepsilon\|_{\eta, 0} T^\eta +\|S_t x\|_{\mathcal H} \leq \|X^\varepsilon\|_{\eta, 0} T^\eta+\|x\|_{\mathcal H}$ for the term $I_2$.

Combining these estimates and using (\ref{Y}) yields that
\begin{equation}\label{Zgamma}
\|Z'\|_{\gamma,-2\gamma}\leq C_T (1+|B|_\gamma)\big(1+\|X^\varepsilon, G_1(X^\varepsilon)\|_{\mathcal{D}_{\mathbf B}^{2\gamma,\eta}} \big)^2.
\end{equation}
\textbf{Bound of $R^{G_1(X^\varepsilon)}$}:
By (\ref{remainder}) we have
\begin{align*}
R^{G_1(X^\varepsilon)}_{t,s}&=G_1(X_t^\varepsilon)-S_{t-s}G_1(X_s^\varepsilon)-S_{t-s}DG_1(X_s^\varepsilon)(X^\varepsilon)_s^\prime \delta B_{t,s} \\
&=G_1(X_t^\varepsilon)-S_{t-s}G_1(X_s^\varepsilon)-DG_1(X_t^\varepsilon)S_{t-s}(X^\varepsilon)_s^\prime \delta B_{t,s}  \\
&~~~+DG_1(X_t^\varepsilon)S_{t-s}(X^\varepsilon)_s^\prime \delta B_{t,s}-S_{t-s}DG_1(X_s^\varepsilon)(X^\varepsilon)_s^\prime \delta B_{t,s} \\
&= \big(G_1(X_t^\varepsilon)-S_{t-s}G_1(X_s^\varepsilon)-DG_1(X_t^\varepsilon)(X_t^\varepsilon-S_{t-s}X_s^\varepsilon) \big)
+DG_1(X_t^\varepsilon) R_{t,s}^{X^\varepsilon} \\
&~~~+ (I_2+I_3+I_4) \delta B_{t,s} \\
&=: I_5+I_6+(I_2+I_3+I_4) \delta B_{t,s}.
\end{align*}
For $I_5$ we have
$$
\begin{aligned}
I_5&=\big(G_1(X_t^\varepsilon)-G_1(S_{t-s}X_s^\varepsilon)-DG_1(X_t^\varepsilon)(X_t^\varepsilon-S_{t-s}X_s^\varepsilon)\big)
+\big(G_1(S_{t-s}X_s^\varepsilon)-S_{t-s}G_1(X_s^\varepsilon) \big)\\
&=:I_{51}+I_{52}.
\end{aligned}
$$
Using Taylor's theorem we get
\begin{equation}\label{I51}
\|I_{51}\|_{\mathcal{H}_{-2\gamma}^d}\leq C \|X^\varepsilon\|_{\gamma,-2\gamma}^2 |t-s|^{2\gamma}.
\end{equation}
For the `commutator' $I_{52}$, by (\ref{semigroup}) one has
\begin{align} \label{I52}
\|I_{52}\|_{\mathcal{H}_{-2\gamma}^d}
&\leq \|G_1(S_{t-s}X_s^\varepsilon)-G_1(X_s^\varepsilon)\|_{\mathcal{H}_{-2\gamma}^d}
+\|G_1(X_s^\varepsilon)-S_{t-s}G_1(X_s^\varepsilon)\|_{\mathcal{H}_{-2\gamma}^d}  \nonumber\\
&\leq C (\|X^\varepsilon\|_{\infty,0}+\|G_1(X_s^\varepsilon)\|_{\mathcal{H}^d})|t-s|^{2\gamma}
\leq C(1+\|X^\varepsilon\|_{\infty,0})|t-s|^{2\gamma} \nonumber\\
&\leq C_T(1+\|x\|_{\mathcal{H}}+\|X^\varepsilon\|_{\eta,0})|t-s|^{2\gamma}.
\end{align}
For $I_6$, it is easy to see that
\begin{equation} \label{I6}
\|I_6\|_{\mathcal{H}_{-2\gamma}^d} \leq C |R^{X^\varepsilon}|_{2\gamma, -2\gamma} |t-s|^{2\gamma}.
\end{equation}
Combined with estimates for terms $I_2$-$I_4$ and (\ref{I51})-(\ref{I6}), we have
\begin{equation}\label{Zrem}
|R^{G_1(X^\varepsilon)}|_{2\gamma, -2\gamma} \leq C_T(1+|B|_\gamma)^2 \big(1+\|X^\varepsilon, G_1(X^\varepsilon)\|_{\mathcal{D}_{\mathbf B}^{2\gamma,\eta}} \big)^2.
\end{equation}

Inserting (\ref{Zgamma}) and (\ref{Zrem}) into (\ref{difest}) and using $|B|_\gamma, |B^2|_{2\gamma}$ and $\|X^\varepsilon,  G_1(X^\varepsilon)\|_{\mathcal{D}_{\mathbf B}^{2\gamma,\eta}} <M$, $\mathbb P$-a.s., we get
\begin{equation}\label{difresult}
\mathbb E \Big[\sup_{t\in [0,T]} \Big\| \displaystyle\int_0^t S_{t-s} G_1 (X_s^\varepsilon) d\mathbf{B}_s \Big\|_{\mathcal{H}_\eta}^4 \Big]
\leq  C_{M,T} (1+\|x\|_{\mathcal{H}_\eta}^4).
\end{equation}
Combining estimates (\ref{initest}), (\ref{drifest}) and (\ref{difresult}), we obtain the desired result.
\hfill $\square$

\end{proof}

\begin{lemma}\label{Xdifflem}
Suppose that conditions $(\mathbf{H1})$ and $(\mathbf{H2})$ hold.
Then for each $T>0$ and $\eta \in [0,\gamma)$ with $\gamma \in (1/3, \gamma_0)$, there are constants $M>0$ and $C_{M,T}>0$ such that for any $x \in \mathcal{H}_\eta, y\in \mathcal{H}$ and $\varepsilon ,h \in(0,1)$, we have
$$\sup_{t \in [0,T]} \mathbb E \|X_{t+h}^\varepsilon-X_t^\varepsilon\|_{\mathcal{H}}^4
\leq h^{4\eta}  C_{M,T} (1+\|x\|_{\mathcal{H}_\eta}^4+\|y\|_{\mathcal{H}}^4).$$
\end{lemma}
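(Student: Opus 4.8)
The plan is to split the increment into a ``semigroup'' piece, a ``Lebesgue tail'' piece and a ``rough tail'' piece, and to bound each of them in $\mathcal{H}$. Starting from the mild formula for $X_{t+h}^\varepsilon$, using $S_{t+h-s}=S_hS_{t-s}$ for $s\le t$ in the Lebesgue integral, and using the fact — which is built into the rough convolution of Theorem~\ref{RoughInt}, whose output lies in $\hat{C}^\gamma$ — that
\[
\int_0^{t+h}S_{t+h-u}G_1(X_u^\varepsilon)\,d\mathbf{B}_u=S_h\int_0^{t}S_{t-u}G_1(X_u^\varepsilon)\,d\mathbf{B}_u+\int_t^{t+h}S_{t+h-u}G_1(X_u^\varepsilon)\,d\mathbf{B}_u,
\]
one obtains the decomposition
\[
X_{t+h}^\varepsilon-X_t^\varepsilon=(S_h-Id)X_t^\varepsilon+\int_t^{t+h}S_{t+h-s}F_1(X_s^\varepsilon,Y_s^\varepsilon)\,ds+\int_t^{t+h}S_{t+h-u}G_1(X_u^\varepsilon)\,d\mathbf{B}_u=:J_1+J_2+J_3.
\]

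For $J_1$, the second estimate in (\ref{semigroup}) with $\beta=\sigma=\eta$ gives $\|J_1\|_{\mathcal{H}}\lesssim h^\eta\|X_t^\varepsilon\|_{\mathcal{H}_\eta}$, so that, by Lemma~\ref{Xetaest}, $\mathbb{E}\|J_1\|_{\mathcal{H}}^4\lesssim h^{4\eta}\,\mathbb{E}\big[\sup_{r\in[0,T]}\|X_r^\varepsilon\|_{\mathcal{H}_\eta}^4\big]\le C_{M,T}h^{4\eta}(1+\|x\|_{\mathcal{H}_\eta}^4+\|y\|_{\mathcal{H}}^4)$. For $J_2$, the uniform bound (\ref{F1Bound}) on $F_1$ together with the contractivity of $(S_t)_{t\ge0}$ on $\mathcal{H}$ gives $\|J_2\|_{\mathcal{H}}\le\|F_1\|_\infty h$, whence $\mathbb{E}\|J_2\|_{\mathcal{H}}^4\lesssim h^4\le h^{4\eta}$ because $h<1$ and $4\eta<4$.

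The term $J_3$ is the main one. By Lemma~\ref{funCRP} — applicable since $X^\varepsilon\in\hat{C}^\eta([0,T];\mathcal{H})$ and $(X^\varepsilon)'=G_1(X^\varepsilon)\in\mathcal{L}^\infty([0,T];\mathcal{H}^d)$ — the pair $(G_1(X^\varepsilon),DG_1(X^\varepsilon)G_1(X^\varepsilon))$ belongs to $\mathscr{D}_{S,\mathbf{B}}^{2\gamma}([0,T];\mathcal{H}_{-2\gamma}^d)$. Applying the bound (\ref{IntBound}) on the interval $[t,t+h]$ with $\alpha=-2\gamma$ and $\beta=2\gamma<3\gamma$, and estimating the two ``local'' terms of the rough sum by $\|S_hG_1(X_t^\varepsilon)\delta B_{t+h,t}\|_{\mathcal{H}}\lesssim\|G_1\|_\infty|B|_\gamma h^\gamma$ and $\|S_hDG_1(X_t^\varepsilon)G_1(X_t^\varepsilon)B^2_{t+h,t}\|_{\mathcal{H}}\lesssim|B^2|_{2\gamma}h^{2\gamma}$, one obtains (using $h^{2\gamma}\le h^\gamma$)
\[
\|J_3\|_{\mathcal{H}}\lesssim\big(1+|B|_\gamma+|B^2|_{2\gamma}+|R^{G_1(X^\varepsilon)}|_{2\gamma,-2\gamma}|B|_\gamma+\|DG_1(X^\varepsilon)G_1(X^\varepsilon)\|_{\gamma,-2\gamma}|B^2|_{2\gamma}\big)h^\gamma.
\]
Since $\eta<\gamma$ and $h<1$ we have $h^\gamma\le h^\eta$; and under the standing assumption $|B|_\gamma,|B^2|_{2\gamma},\|X^\varepsilon,G_1(X^\varepsilon)\|_{\mathcal{D}_{\mathbf{B}}^{2\gamma,\eta}}<M$ a.s., the estimates (\ref{Zgamma}) and (\ref{Zrem}) already established in the proof of Lemma~\ref{Xetaest} bound both $\|DG_1(X^\varepsilon)G_1(X^\varepsilon)\|_{\gamma,-2\gamma}$ and $|R^{G_1(X^\varepsilon)}|_{2\gamma,-2\gamma}$ by $C_{M,T}$. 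Hence $\|J_3\|_{\mathcal{H}}\le C_{M,T}h^\eta$ a.s., and therefore $\mathbb{E}\|J_3\|_{\mathcal{H}}^4\le C_{M,T}h^{4\eta}$.

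Combining the three bounds via $\|X_{t+h}^\varepsilon-X_t^\varepsilon\|_{\mathcal{H}}^4\lesssim\|J_1\|_{\mathcal{H}}^4+\|J_2\|_{\mathcal{H}}^4+\|J_3\|_{\mathcal{H}}^4$, taking expectations, using that the three bounds are uniform in $t\in[0,T]$, and absorbing the $h^4$ and $C_{M,T}h^{4\eta}$ contributions into $C_{M,T}h^{4\eta}(1+\|x\|_{\mathcal{H}_\eta}^4+\|y\|_{\mathcal{H}}^4)$, yields the assertion. The delicate point is the regularity bookkeeping in $J_3$: the pair $(G_1(X^\varepsilon),DG_1(X^\varepsilon)G_1(X^\varepsilon))$ is a controlled rough path only at level $\mathcal{H}_{-2\gamma}$, while $J_3$ must be measured in $\mathcal{H}$, which forces the choice $\beta=2\gamma$ in (\ref{IntBound}) and is exactly what produces the gain $h^\gamma$ (hence $h^\eta$); everything else — the $\mathcal{H}_\eta$-moment bound for $X^\varepsilon$ and the controlled-rough-path norms of $G_1(X^\varepsilon)$ — is already available from Lemma~\ref{Xetaest}. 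One also uses, harmlessly since $h<1$, that the constants in (\ref{IntBound}) and Lemma~\ref{funCRP} are uniform over time horizons in $(0,1]$.
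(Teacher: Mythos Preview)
Your proof is correct and follows essentially the same route as the paper: the same three-term decomposition, the same use of the semigroup smoothing estimate together with Lemma~\ref{Xetaest} for $J_1$, and the same application of (\ref{IntBound}) with $\alpha=-2\gamma$, $\beta=2\gamma$ together with the bounds (\ref{Zgamma})--(\ref{Zrem}) under the standing a.s.\ bound $M$ for $J_3$. The only cosmetic difference is that for $J_2$ you invoke the uniform bound $\|F_1\|_\infty$ from (\ref{F1Bound}) directly, whereas the paper uses the Lipschitz condition (\ref{F1Lip}) plus the moment bounds on $X^\varepsilon$ and $Y^\varepsilon$; your shortcut is valid and slightly cleaner.
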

\begin{proof}
By some simple calculations we obtain
\begin{equation}\label{Xdiff}
X_{t+h}^\varepsilon-X_t^\varepsilon=(S_h-Id) X_t^\varepsilon+\int_t^{t+h} S_{t+h-s} F_1(X_s^\varepsilon, Y_s^\varepsilon) ds
+\int_t^{t+h} S_{t+h-s} G_1(X_s^\varepsilon) d \mathbf B_s.
\end{equation}
For the first term of the right hand side of (\ref{Xdiff}), using (\ref{semigroup}) and Lemma \ref{Xetaest} we have
\begin{equation}\label{Xt}
\mathbb E \|(S_h-Id) X_t^\varepsilon\|_{\mathcal{H}}^4 \lesssim h^{4\eta} \mathbb E \|X_t^\varepsilon\|_{\mathcal{H}_\eta}^4
\leq h^{4\eta}  C_{M,T} (1+\|x\|_{\mathcal{H}_\eta}^4+\|y\|_{\mathcal{H}}^4 ).
\end{equation}
For the second term of the right hand side of (\ref{Xdiff}), by H\"{o}lder's inequality, (\ref{F1Lip}), (\ref{Yest}) and Lemma \ref{Xetaest} we get
\begin{align}\label{Xth1}
\mathbb E \Big\|\int_t^{t+h} S_{t+h-s} F_1(X_s^\varepsilon, Y_s^\varepsilon) ds \Big\|_{\mathcal{H}}^4 &\leq  \mathbb E \Big[ \Big(\int_t^{t+h} 1^{\frac 43} ds\Big)^{\frac 34} \Big( \int_t^{t+h} \|S_{t+h-s} F_1(X_s^\varepsilon, Y_s^\varepsilon)\|_{\mathcal{H}}^4 ds\Big)^{\frac 14} \Big]^4 \nonumber\\
&\leq  h^3 \mathbb E \int_t^{t+h} \|F_1(X_s^\varepsilon, Y_s^\varepsilon)\|_{\mathcal{H}}^4 ds \nonumber\\
&\leq  C h^3 \mathbb E \int_t^{t+h} (1+\|X_s^\varepsilon\|_{\mathcal{H}}^4+\| Y_s^\varepsilon\|_{\mathcal{H}}^4) ds \nonumber\\
&\leq  C h^3 \mathbb E \int_t^{t+h} (1+\|X_s^\varepsilon\|_{\mathcal H_\eta}^4 +\|Y_s^\varepsilon\|_{\mathcal{H}}^4) ds \nonumber\\
&\leq  C_{M,T} h^4 (1+\|x\|_{\mathcal H_\eta}^4+\|y\|_{\mathcal H}^4),
\end{align}
where we used the fact that the embedding $\mathcal H_\eta \subset \mathcal H$ is continuous in the fourth inequality.

Next by (\ref{IntBound}) we have
\begin{align*}
& \Big\|\int_t^{t+h} S_{t+h-s} G_1(X_s^\varepsilon) d \mathbf B_s  \Big\|_{\mathcal{H}} \\
\leq & \Big\|\int_t^{t+h} S_{t+h-s} G_1(X_s^\varepsilon) d \mathbf B_s-S_h G_1(X_t^\varepsilon) \delta B_{t+h, t}-S_h DG_1(X_t^\varepsilon)G_1(X_t^\varepsilon) B_{t+h,t}^2 \Big\|_{\mathcal{H}} \\
&+ \|S_h G_1(X_t^\varepsilon) \delta B_{t+h, t}\|_{\mathcal{H}}+\|S_h DG_1(X_t^\varepsilon)G_1(X_t^\varepsilon) B_{t+h,t}^2\|_{\mathcal{H}} \\
\lesssim & \big(|R^{G_1(X^\varepsilon)} |_{2\gamma, -2\gamma} |B|_\gamma+ \|DG_1(X^\varepsilon)G_1(X^\varepsilon)\|_{\gamma, -2\gamma} |B^2|_{2\gamma} \big)|h|^\gamma \\
&+\|G_1(X^\varepsilon)\|_{\infty,0} |B|_\gamma h^\gamma+ \|DG_1(X^\varepsilon)G_1(X^\varepsilon)\|_{\infty,0} |B^2|_{2\gamma} h^{2\gamma}\\
\leq & C_T (|B|_\gamma+|B^2|_{2\gamma})(1+|B|_\gamma)^2 (1+\|X^\varepsilon, G_1(X^\varepsilon)\|_{\mathcal{D}_{\mathbf B}^{2\gamma,\eta}})^2 h^\gamma +C (|B|_\gamma+|B^2|_{2\gamma}) \|G_1(X^\varepsilon)\|_{\infty,0}  h^\gamma \\
\leq & C_{M,T} h^\gamma,
\end{align*}
where we used estimates (\ref{Zgamma}) and (\ref{Zrem}) in the third inequality, and for the last inequality we used $|B|_\gamma, |B^2|_{2\gamma},\|X^\varepsilon, G_1(X^\varepsilon)\|_{\mathcal{D}_{\mathbf B}^{2\gamma,\eta}} <M$, $\mathbb P$-a.s..
Therefore, it is easy to obtain that
\begin{equation}\label{Xth2}
\mathbb E \Big\|\int_t^{t+h} S_{t+h-s} G_1(X_s^\varepsilon) d \mathbf B_s  \Big\|_{\mathcal{H}}^4  \leq C_{M,T} h^{4\gamma}.
\end{equation}
Combining the above computations (\ref{Xt})-(\ref{Xth2}), we have
\begin{equation*}
\mathbb E \|X_{t+h}^\varepsilon-X_t^\varepsilon\|_{\mathcal{H}}^4 \leq h^{4\eta}  C_{M,T} (1+\|x\|_{\mathcal{H}_\eta}^4+\|y\|_{\mathcal{H}}^4 ).
\end{equation*}
The proof is complete.               \hfill $\square$

\end{proof}

\subsection{Frozen equation}\label{SubFrozen}

Given the Hilbert space $\mathcal{H}$, we define the following frozen equation with a fixed slow component $x \in \mathcal{H}$
\begin{equation}\label{frozeneq}
\left\{ \begin{aligned}
	d Y_t&=\big[A Y_t+F_2(x,Y_t)\big]dt+G_2(x,Y_t) d \widetilde{W}_t,\\
	Y_0&= y \in \mathcal{H},
\end{aligned} \right.
\end{equation}
where $\{\widetilde{W}_t\}_{t\geq0}$ is a $m$-dimensional BM defined on another probability space $(\widetilde{\Omega} ,\widetilde{\mathscr F},\{\widetilde{\mathscr F}_t\}_{t\geq0},\widetilde{\mathbb P})$ and is independent of $\{W_t\}_{t\geq0}$.

Obviously, according to conditions (\ref{F2Lip}) and (\ref{Glip}), $F_2$ and $G_2$ satisfy the global Lipschitz condition.
It is easy to verify that Eq. (\ref{frozeneq}) admits a unique solution $\{Y^{x,y}_t\}_{t\in[0,T]}$, which is a homogeneous Markov process.

Let $\{P^x_t\}_{t\geq0}$ be a Markov transition semigroup of process $\{Y^{x,y}_t\}_{t\geq 0}$, i.e., for any bounded measurable function $f:\mathcal{H}\rightarrow \mathcal{H} $ we have
$$P^x_t f(y):=\widetilde{\mathbb E} [f(Y^{x,y}_t)],~~~~ y \in \mathcal{H},~t>0,$$
where $\widetilde{\mathbb E}$ is the expectation operator corresponding to $\widetilde{\mathbb P}$.

By \cite[Theorem 2.2]{HLL20}, there exists a unique invariant measure $\mu^x$ for the transition semigroup $\{P_t^x\}_{t\geq0}$ under the condition $(\mathbf{H4})$ and we have the following exponential ergodicity, the proof of which is similar to \cite[Proposition 4.2]{HLL22}, we omit it here.

\begin{lemma}\label{ergo}
For any $(x,y) \in \mathcal H \times \mathcal H$, there exist constants $\rho>0$ and $C> 0$ such that
$$\|\widetilde{\mathbb E}F_1(x, Y_t^{x,y})-\bar{F}_1(x)\|_{\mathcal{H} }\leq C e^{ -\rho t} (1+\|x\|_{\mathcal H}+\|y\|_{\mathcal H}),$$
where $\bar{F}_1 (x)=\displaystyle\int_{\mathcal{H}} F_1 (x,y) \mu^x (dy)$ for $x\in\mathcal{H}$.
\end{lemma}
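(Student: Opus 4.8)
The plan is to deduce the claim from an exponential synchronous-coupling estimate for the frozen equation $(\ref{frozeneq})$, and then to average against the invariant measure $\mu^x$. First I would take two solutions $Y^{x,y_1}$ and $Y^{x,y_2}$ of $(\ref{frozeneq})$ driven by the \emph{same} Brownian motion $\widetilde W$ with the \emph{same} frozen slow variable $x$, and set $\Theta_t:=Y_t^{x,y_1}-Y_t^{x,y_2}$. Applying It\^{o}'s formula to $\|\Theta_t\|_{\mathcal H}^2$ in the variational sense (cf. \cite[Theorem 4.2.5]{LR15}), taking expectation so the martingale term vanishes, and using $\langle A\Theta_t,\Theta_t\rangle_{\mathcal H}\le-\lambda_1\|\Theta_t\|_{\mathcal H}^2$ from $(\mathbf{H3})$ together with the Lipschitz bounds $(\ref{F2Lip})$ and $(\ref{Glip})$ (with $x_1=x_2=x$, so the $\|x_1-x_2\|_{\mathcal H}$ terms drop), one obtains
\begin{equation*}
\frac{d}{dt}\widetilde{\mathbb E}\|\Theta_t\|_{\mathcal H}^2\le-\big(2\lambda_1-2L_{F_2}-L_{G_2}^2\big)\,\widetilde{\mathbb E}\|\Theta_t\|_{\mathcal H}^2=:-2\rho\,\widetilde{\mathbb E}\|\Theta_t\|_{\mathcal H}^2,
\end{equation*}
where $\rho>0$ because $(\mathbf{H4})$ yields $\lambda_1-L_{F_2}-L_{G_2}^2/2>0$. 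Gronwall's inequality then gives the contraction $\widetilde{\mathbb E}\|Y_t^{x,y_1}-Y_t^{x,y_2}\|_{\mathcal H}^2\le e^{-2\rho t}\|y_1-y_2\|_{\mathcal H}^2$.

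Next, since $\mu^x$ is invariant for $\{P_t^x\}_{t\ge0}$ we have $\bar F_1(x)=\int_{\mathcal H}F_1(x,z)\,\mu^x(dz)=\int_{\mathcal H}\widetilde{\mathbb E}F_1(x,Y_t^{x,z})\,\mu^x(dz)$, hence
\begin{equation*}
\widetilde{\mathbb E}F_1(x,Y_t^{x,y})-\bar F_1(x)=\int_{\mathcal H}\Big(\widetilde{\mathbb E}F_1(x,Y_t^{x,y})-\widetilde{\mathbb E}F_1(x,Y_t^{x,z})\Big)\,\mu^x(dz).
\end{equation*}
Estimating the integrand by the Lipschitz property $(\ref{F1Lip})$ (again with $x_1=x_2=x$), Jensen's inequality, and the contraction just obtained, we get $\|\widetilde{\mathbb E}F_1(x,Y_t^{x,y})-\widetilde{\mathbb E}F_1(x,Y_t^{x,z})\|_{\mathcal H}\le C\,\widetilde{\mathbb E}\|Y_t^{x,y}-Y_t^{x,z}\|_{\mathcal H}\le Ce^{-\rho t}\|y-z\|_{\mathcal H}$, and therefore
\begin{equation*}
\|\widetilde{\mathbb E}F_1(x,Y_t^{x,y})-\bar F_1(x)\|_{\mathcal H}\le Ce^{-\rho t}\Big(\|y\|_{\mathcal H}+\int_{\mathcal H}\|z\|_{\mathcal H}\,\mu^x(dz)\Big).
\end{equation*}
It then remains to bound $\int_{\mathcal H}\|z\|_{\mathcal H}\,\mu^x(dz)\le C(1+\|x\|_{\mathcal H})$, which follows from a uniform-in-time second moment estimate $\sup_{t\ge0}\widetilde{\mathbb E}\|Y_t^{x,0}\|_{\mathcal H}^2\le C(1+\|x\|_{\mathcal H}^2)$ — obtained by a standard dissipative energy estimate using $(\mathbf{H4})$, analogous to the one in Lemma $\ref{avlemma1}$ — combined with the weak convergence of $\mathrm{Law}(Y_t^{x,0})$ to $\mu^x$, the lower semicontinuity of $\|\cdot\|_{\mathcal H}^2$, Fatou's lemma, and Cauchy--Schwarz. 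Substituting this bound into the previous display yields the assertion with the above $\rho$ and some $C>0$.

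The coupling-plus-invariance argument is the routine part, which is why the statement may be attributed to \cite[Proposition 4.2]{HLL22}; the point that requires care is making the energy estimate for $\|\Theta_t\|_{\mathcal H}^2$ (and the uniform second moment bound) rigorous in the infinite-dimensional setting via a Galerkin/variational approximation as in \cite{LR15}, and checking that the dissipativity margin it produces is exactly the one guaranteed by $(\mathbf{H4})$ (rather than a strictly smaller one), so that $\rho>0$.
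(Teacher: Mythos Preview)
Your argument is correct and is precisely the standard synchronous-coupling proof that the paper has in mind when it defers to \cite[Proposition 4.2]{HLL22}: the $L^2$ contraction for $\Theta_t=Y_t^{x,y_1}-Y_t^{x,y_2}$ under $(\mathbf{H3})$--$(\mathbf{H4})$, combined with invariance of $\mu^x$ and the uniform second-moment bound (which is exactly $(\ref{frozenest})$ in Lemma~\ref{lem3.4}, proved independently of the present lemma). Note that your moment bound $\int_{\mathcal H}\|z\|_{\mathcal H}\,\mu^x(dz)\le C(1+\|x\|_{\mathcal H})$ can be obtained slightly more directly by truncation and monotone convergence rather than invoking Fatou for weak limits, but this is a cosmetic point.
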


In the following, estimates of the solution to the frozen equation (\ref{frozeneq}) are given.
\begin{lemma}\label{lem3.4}
For any given $x,x_1,x_2,y\in \mathcal{H}$, there exists a constant $C>0$ such that
\begin{align}
	&\sup_{t\geq0}\widetilde{\mathbb E}\|Y_t^{x_1,y}-Y_t^{x_2,y}\|_{\mathcal{H}}^2\leq C\|x_1-x_2\|_{\mathcal{H} }^2, \label{frozendifference}\\
	&\sup_{t\geq0}\widetilde{\mathbb E}\|Y^{x,y}_t\|_{\mathcal{H}}^2\leq C(1+\|x\|_{\mathcal{H} }^2+\|y\|_{\mathcal{H}}^2).\label{frozenest}
\end{align}
\end{lemma}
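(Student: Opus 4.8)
The plan is to derive both estimates from It\^o's formula applied to the squared $\mathcal{H}$-norm of the relevant process, in the same spirit as the proof of Lemma~\ref{avlemma1}, and to close the resulting differential inequalities by a Gronwall/comparison argument that uses the dissipativity of $A$ together with condition $(\mathbf{H4})$. Since for fixed $x\in\mathcal{H}$ the coefficients $F_2(x,\cdot)$ and $G_2(x,\cdot)$ of the frozen equation (\ref{frozeneq}) are globally Lipschitz in $y$ (hence of at most linear growth), the mild solution $Y^{x,y}$ is also the unique variational solution in the Gelfand triple $\mathcal{H}_{1/2}\hookrightarrow\mathcal{H}\hookrightarrow\mathcal{H}_{-1/2}$, so that It\^o's formula for $\|\cdot\|_{\mathcal{H}}^2$ is available (cf.~\cite[Theorem 4.2.5]{LR15}), and $\sup_{t\le T}\widetilde{\mathbb E}\|Y_t^{x,y}\|_{\mathcal{H}}^2<\infty$ holds a priori; the stochastic integral term is removed from the estimates by first localising with the stopping times $\tau_N:=\inf\{t\ge0:\|Y_t^{x,y}\|_{\mathcal{H}}\ge N\}$, taking expectations, and then letting $N\to\infty$ via Fatou's lemma, exactly as in the argument around (\ref{ZL})--(\ref{W0}).

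For (\ref{frozenest}), applying It\^o's formula to $\|Y_t^{x,y}\|_{\mathcal{H}}^2$ and using $(\mathbf{H3})$ to get $\langle AY_t,Y_t\rangle_{\mathcal{H}}\le-\lambda_1\|Y_t\|_{\mathcal{H}}^2$ leads to
$$\frac{d}{dt}\widetilde{\mathbb E}\|Y_t\|_{\mathcal{H}}^2\le-2\lambda_1\widetilde{\mathbb E}\|Y_t\|_{\mathcal{H}}^2+2\widetilde{\mathbb E}\langle F_2(x,Y_t),Y_t\rangle_{\mathcal{H}}+\widetilde{\mathbb E}\|G_2(x,Y_t)\|_{\mathcal{H}^m}^2.$$
From (\ref{F2Lip}) and (\ref{Glip}) one has $\|F_2(x,Y_t)\|_{\mathcal{H}}\le L_{F_2}\|Y_t\|_{\mathcal{H}}+C(1+\|x\|_{\mathcal{H}})$ and $\|G_2(x,Y_t)\|_{\mathcal{H}^m}\le L_{G_2}\|Y_t\|_{\mathcal{H}}+C(1+\|x\|_{\mathcal{H}})$; inserting these, applying Young's inequality with a small parameter $\kappa>0$ to the cross terms and using $(a+b)^2\le 2a^2+2b^2$ on the diffusion term, we obtain
$$\frac{d}{dt}\widetilde{\mathbb E}\|Y_t\|_{\mathcal{H}}^2\le-\big(2\lambda_1-2L_{F_2}-2L_{G_2}^2-\kappa\big)\widetilde{\mathbb E}\|Y_t\|_{\mathcal{H}}^2+C_\kappa(1+\|x\|_{\mathcal{H}}^2).$$
Since $(\mathbf{H4})$ gives $2\lambda_1-2L_{F_2}-2L_{G_2}^2\ge 2(\lambda_1-L_{F_2}-3L_{G_2}^2)>0$, we may fix $\kappa$ so that $\beta:=2\lambda_1-2L_{F_2}-2L_{G_2}^2-\kappa>0$, and the comparison theorem yields $\widetilde{\mathbb E}\|Y_t\|_{\mathcal{H}}^2\le\|y\|_{\mathcal{H}}^2e^{-\beta t}+\tfrac{C_\kappa}{\beta}(1+\|x\|_{\mathcal{H}}^2)$, which gives (\ref{frozenest}) after taking $\sup_{t\ge0}$.

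For (\ref{frozendifference}), set $\Theta_t:=Y_t^{x_1,y}-Y_t^{x_2,y}$, so that $\Theta_0=0$ and
$$d\Theta_t=\big[A\Theta_t+F_2(x_1,Y_t^{x_1,y})-F_2(x_2,Y_t^{x_2,y})\big]dt+\big[G_2(x_1,Y_t^{x_1,y})-G_2(x_2,Y_t^{x_2,y})\big]d\widetilde W_t.$$
Applying It\^o's formula to $\|\Theta_t\|_{\mathcal{H}}^2$, using dissipativity of $A$ as above, and estimating the drift and diffusion increments by (\ref{F2Lip}) and (\ref{Glip}) --- writing e.g.\ $\|F_2(x_1,Y_t^{x_1,y})-F_2(x_2,Y_t^{x_2,y})\|_{\mathcal{H}}\le C\|x_1-x_2\|_{\mathcal{H}}+L_{F_2}\|\Theta_t\|_{\mathcal{H}}$ and absorbing the $\|x_1-x_2\|_{\mathcal{H}}\|\Theta_t\|_{\mathcal{H}}$ cross term by Young's inequality --- we arrive at
$$\frac{d}{dt}\widetilde{\mathbb E}\|\Theta_t\|_{\mathcal{H}}^2\le-\beta\,\widetilde{\mathbb E}\|\Theta_t\|_{\mathcal{H}}^2+C_\kappa\|x_1-x_2\|_{\mathcal{H}}^2$$
with the same $\beta>0$. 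Since $\Theta_0=0$, the comparison theorem gives $\widetilde{\mathbb E}\|\Theta_t\|_{\mathcal{H}}^2\le\tfrac{C_\kappa}{\beta}\|x_1-x_2\|_{\mathcal{H}}^2$ uniformly in $t\ge0$, which is (\ref{frozendifference}).

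The only genuinely delicate point is the justification of It\^o's formula for $\|\cdot\|_{\mathcal{H}}^2$ with the unbounded generator $A$, and of the vanishing of the stochastic term under expectation; both are standard once $Y^{x,y}$ is viewed as a variational solution in $\mathcal{H}_{1/2}\subset\mathcal{H}\subset\mathcal{H}_{-1/2}$ and one localises by $\tau_N$ before passing to the limit, so no difficulty arises beyond what is already needed in Lemma~\ref{avlemma1}. Everything else --- the extraction of the spectral gap from $(\mathbf{H4})$ and the comparison step --- is routine.
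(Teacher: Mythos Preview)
Your proposal is correct and follows essentially the same approach as the paper: apply It\^o's formula to the squared $\mathcal{H}$-norm, use the dissipativity $\langle AY,Y\rangle_{\mathcal{H}}\le-\lambda_1\|Y\|_{\mathcal{H}}^2$ together with the Lipschitz bounds (\ref{F2Lip}), (\ref{Glip}) and Young's inequality, and close by the comparison theorem with the spectral gap guaranteed by $(\mathbf{H4})$. The paper in fact only writes out the argument for (\ref{frozendifference}) and declares (\ref{frozenest}) analogous, while you spell out both; your added remarks on the variational-solution justification of It\^o's formula are a welcome clarification that the paper omits, though your pointer to (\ref{ZL})--(\ref{W0}) is slightly off target since that passage concerns the identification of rough and It\^o integrals rather than localisation per se.
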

\begin{proof}
Note that the proof of (\ref{frozenest}) is very similar to (\ref{frozendifference}), so we only give the proof of (\ref{frozendifference}) here.
Let $Z_t^{x_1,x_2}:=Y_t^{x_1,y}-Y_t^{x_2,y}$, which satisfies the following equation
\begin{equation*}
	\left\{ \begin{aligned}
		dZ_t^{x_1,x_2}=& A Z_t^{x_1,x_2}dt+\big[F_2(x_1,Y_t^{x_1,y})-F_2(x_2,Y_t^{x_2,y})\big]dt +\big[G_2(x_1,Y_t^{x_1,y})-G_2(x_2,Y_t^{x_2,y})\big] d\widetilde{W}_t,\\
		Z_0^{x_1,x_2}=&0.
	\end{aligned} \right.
\end{equation*}
Applying It\^{o}'s formula to $\|Z_t^{x_1,x_2}\|_{\mathcal H}^2$, we have
\begin{align*}
\|Z_t^{x_1,x_2}\|_{\mathcal H}^2=& 2\int_0^t  \langle A Z_s^{x_1,x_2}, Z_s^{x_1,x_2}\rangle_{\mathcal H} d s+2\int_0^t  \langle F_2(x_1,Y_s^{x_1,y})-F_2(x_2,Y_s^{x_2,y}), Z_s^{x_1,x_2} \rangle_{\mathcal H} d s\\
&+\int_0^t \|G_2(x_1,Y_s^{x_1,y})-G_2(x_2,Y_s^{x_2,y})\|_{\mathcal H^m}^2 d s\\
&+2\int_{0}^{t}\langle Z_s^{x_1,x_2}, (G_2(x_1,Y_s^{x_1,y})-G_2(x_2,Y_s^{x_2,y})) d\widetilde{W}_s \rangle_{\mathcal H}.
\end{align*}
By (\ref{F2Lip}), (\ref{Glip}) and Young's inequality, we have
\begin{align*}
	\frac d{dt}\widetilde{\mathbb E}\|Z_t^{x_1,x_2}\|_{\mathcal{H}}^2&= 2\widetilde{\mathbb E}\big[
       \langle A Z_t^{x_1,x_2},Z_t^{x_1,x_2}\rangle_{\mathcal H }\big] +2\widetilde{\mathbb E}\big[\langle F_2(x_1,Y_t^{x_1,y})-F_2(x_2,Y_t^{x_2,y}),Z_t^{x_1,x_2}\rangle_{\mathcal{H}}\big]\\
    &~~~+\widetilde{\mathbb E}\big[\| G_2(x_1,Y_t^{x_1,y})-G_2(x_2,Y_t^{x_2,y}) \|_{\mathcal{H}^m}^2\big]\\
	& \leq -2(\lambda_1-L_{F_2}-L_{G_2}^2)\widetilde{\mathbb E}\|Z_t^{x_1,x_2}\|_{\mathcal{H}}^2 +C\widetilde{\mathbb E}\big[\|x_1-x_2\|_{\mathcal{H} }\|Z_t^{x_1,x_2}\|_{\mathcal{H}}\big]+C\|x_1-x_2\|_{\mathcal H}^2\\
	&\leq -(\lambda_1-L_{F_2}-L_{G_2}^2) \widetilde{\mathbb E}\|Z_t^{x_1,x_2}\|_{\mathcal{H}}^2+C\|x_1-x_2\|_{\mathcal{H} }^2,
\end{align*}
where we used Young's inequality for the last inequality.
The comparison theorem implies that
\begin{equation*}
\widetilde{\mathbb E}\|Z_t^{x_1,x_2}\|_{\mathcal{H}}^2\leq C\|x_1-x_2\|_{\mathcal{H} }^2 \int_0^t e^{-(\lambda_1-L_{F_2}-L_{G_2}^2)(t-s)} ds\leq C\|x_1-x_2\|_{\mathcal{H} }^2.
\end{equation*}
Combining the above computations, we get the desired result.
\hfill $\square$

\end{proof}

\subsection{Averaged equation}\label{sec3.3}

We now consider the following averaged RPDE
\begin{equation}\label{Averagedeq}
\left\{ \begin{aligned}
	d\bar{X}_t&=\big[A \bar{X}_t+\bar{F}_1(\bar{X}_t)\big]dt+G_1(\bar{X}_t)d \mathbf B_t ,\\
	\bar{X}_0&=x,
\end{aligned} \right.
\end{equation}
where the averaged coefficient $\bar{F}_1(x)=\displaystyle\int_{\mathcal{H}} F_1(x,y)\mu^x(dy),\,x\in\mathcal{H},
\mu^x$ is the unique invariant measure of the Markov semigroup $\{P_t^x\}_{t\geq0}$ of the frozen equation (\ref{frozeneq}).

It can be verified that $\bar{F}_1:\mathcal{H} \to \mathcal{H}$ is also Lipschitz continuous, since it is known that $F_1: \mathcal{H} \times \mathcal{H} \to \mathcal{H}$ is Lipschitz continuous by $(\ref{F1Lip})$.
By referring to \cite[Theorem 2.3]{MG23}, we can obtain the existence and uniqueness of solutions to Eq. (\ref{Averagedeq}).

\begin{lemma}\label{AveragedEstimate}
Suppose that conditions $(\mathbf{H1})$ and $(\mathbf{H2})$ hold.
For any $T>0$, $x\in \mathcal H$ and $\mathbf B=(B,B^2)\in \mathscr{C}^\gamma([0,T]; \mathbb R^d)$ with $\gamma \in (1/3, \gamma_0)$,
Eq.~\eqref{Averagedeq} has a unique global solution $(\bar{X},\bar{X}^\prime) \in \mathcal D_{\mathbf B}^{2\gamma, \eta}([0,T]; \mathcal H)$ with $\eta \in [0,\gamma)$ such that
$\bar{X}^\prime=G_1(\bar{X})$ and
$$\bar{X}_t=S_t x +\int_0^t S_{t-s} \bar{F}_1(\bar{X}_s)ds +\int_0^t S_{t-s} G_1 (\bar{X}_s) d \mathbf B_s, ~~~~ t\in [0,T].$$
Moreover, there exists a constant $C>0$ such that
\begin{equation}\label{AveragedBound}
	 \|\bar{X}\|_{\eta, 0} \leq C(1+\|x\|_{\mathcal H}).
\end{equation}
\end{lemma}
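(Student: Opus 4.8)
The statement splits into two parts: well-posedness of \eqref{Averagedeq} in $\mathcal D_{\mathbf B}^{2\gamma,\eta}([0,T];\mathcal{H})$, and the a priori bound \eqref{AveragedBound}. For the first part my plan is to check that $\bar F_1$ satisfies exactly the hypotheses on a drift coefficient required by \cite[Theorem 2.3]{MG23}: it is bounded, with $\|\bar F_1\|_\infty\le\|F_1\|_\infty$ by \eqref{F1Bound} since $\mu^x$ is a probability measure, and it is globally Lipschitz on $\mathcal{H}$ — here one writes $\bar F_1(x)=\lim_{t\to\infty}\widetilde{\mathbb E}F_1(x,Y_t^{x,y})$ for any fixed $y$ (Lemma \ref{ergo}) and combines \eqref{F1Lip} with the contraction estimate \eqref{frozendifference} before passing to the limit. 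Since in addition $G_1\in C^3_{-2\gamma,0}(\mathcal{H};\mathcal{H}^d)$ by $(\mathbf{H2})$, \cite[Theorem 2.3]{MG23} then yields the unique global controlled-rough-path solution $(\bar X,G_1(\bar X))\in\mathcal D_{\mathbf B}^{2\gamma,\eta}([0,T];\mathcal{H})$ together with its mild/rough formulation, in complete analogy with Lemma \ref{wellpose}.

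For \eqref{AveragedBound} I would re-run the concatenation scheme that builds the solution, this time keeping track of the initial value. Fix a greedy partition $0=T_0<T_1<\cdots<T_N=T$ of $[0,T]$, with $N$ depending only on $\interleave\mathbf B\interleave_\gamma$ and $T$, such that on each subinterval the rough input $\mathbf B$ has homogeneous norm below a fixed ($x$-independent) threshold. On each $[T_i,T_{i+1}]$ I would bound the controlled-path norm of $(\bar X,G_1(\bar X))$ by feeding the mild formulation through the rough-integral estimate \eqref{IntBound}, the composition estimate of Lemma \ref{funCRP}, the bound \eqref{Y}, and the semigroup estimates \eqref{semigroup} — this is the same computation as in the proof of Lemma \ref{Xetaest}, with the fast variable $Y^\varepsilon$ removed and $\bar F_1$ in place of $F_1$. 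Because $\bar F_1$ is bounded and $G_1$ together with its first three derivatives is bounded, the terms that would otherwise contain $\bar X_{T_i}$ are either already controlled by $\|G_1\|_\infty,\|DG_1\|_\infty,\ldots$ or carry a positive power of the subinterval length, so that for the above threshold small enough the resulting inequality closes and gives $\|\bar X,G_1(\bar X)\|_{\mathcal D_{\mathbf B,[T_i,T_{i+1}]}^{2\gamma,\eta}}\le C_{\mathbf B,T}(1+\|\bar X_{T_i}\|_{\mathcal{H}})$. In particular $\|\bar X_{T_{i+1}}\|_{\mathcal{H}}\le(1+C_{\mathbf B,T})\|\bar X_{T_i}\|_{\mathcal{H}}+C_{\mathbf B,T}$, which after $N$ iterations yields $\max_{0\le i\le N}\|\bar X_{T_i}\|_{\mathcal{H}}\le C_{\mathbf B,T}(1+\|x\|_{\mathcal{H}})$; concatenating the per-interval $\hat C^\eta$-seminorm bounds (using the contractivity of $S$ to telescope $\hat\delta\bar X$ across subintervals) then gives $\|\bar X\|_{\eta,0}\le C(1+\|x\|_{\mathcal{H}})$, which is \eqref{AveragedBound}.

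The delicate point — and the step I expect to consume most of the work — is the per-subinterval estimate. Because $(S_t)_{t\ge0}$ is singular at $t=0$, the norm of $\bar X$ as a controlled rough path at level $\mathcal{H}_{-2\gamma}$ is coupled to its supremum norm at level $\mathcal{H}_0$ through the reduction increment $\hat\delta$ and the remainder $R^{\bar X}$, and \eqref{IntBound}–Lemma \ref{funCRP} a priori produce only a quadratic self-referential inequality; one must check that it closes on a length scale depending on $\mathbf B$ and $T$ but not on $x$, and that the dependence on $\|x\|_{\mathcal{H}}$ that survives is linear, which is exactly where the boundedness of $\bar F_1$ and of $G_1$ and its derivatives is used. (This is precisely the a priori estimate underlying \cite[Theorem 2.3]{MG23}, so one may also simply quote it.) Everything else — the bookkeeping for the number of greedy intervals and the gluing of Hölder seminorms — is routine, and once the $\mathcal D_{\mathbf B}^{2\gamma,\eta}$-bound is in hand \eqref{AveragedBound} is immediate since $\|\bar X\|_{\eta,0}$ is one of its constituent seminorms.
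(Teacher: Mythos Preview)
Your proposal is correct and follows the same approach as the paper. The paper gives no detailed proof of this lemma: it simply observes (just before the statement) that $\bar F_1$ inherits Lipschitz continuity from $F_1$, then cites \cite[Theorem 2.3]{MG23} for well-posedness, and the bound \eqref{AveragedBound} is left implicit (it is the a priori estimate underlying that theorem, and the paper later invokes \cite[Corollary 2.1]{MG23} together with the concatenation procedure of \cite{GLS,HN20} in the proof of Theorem~\ref{th3} for exactly this purpose). Your sketch of the Lipschitz verification via Lemma~\ref{ergo} and \eqref{frozendifference} is in fact more careful than the paper's one-line remark, and your outline of the greedy-partition/concatenation argument for \eqref{AveragedBound} is precisely what \cite{MG23} does, so nothing is missing.
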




\subsection{Auxiliary process}\label{sub3.4}

The time discretization technique proposed by Khasminskii in \cite{K68} will be used in the proof of Theorem \ref{MainRes}.
To be more precise, we first divide the time interval $[0,T]$ into some subintervals of size $\delta$,
where $\delta$ is a fixed positive constant depending on $\varepsilon$, the specific value of which will be given later.
Then, on each subinterval $[k\delta,(k+1)\delta \wedge T]$, $k\in\mathbb{N}$, we construct the following auxiliary process
\begin{equation}\label{Auxeq}
\hat{Y}_t^\varepsilon=\hat{Y}_{k\delta}^\varepsilon+\frac 1\varepsilon\int_{k\delta}^t A \hat{Y}_s^\varepsilon ds+\frac 1\varepsilon\int_{k\delta}^t F_2(X_{k\delta}^\varepsilon,\hat{Y}_s^\varepsilon)ds+\frac 1{\sqrt{\varepsilon}}\int_{k\delta}^tG_2(X_{k\delta}^\varepsilon,\hat{Y}_s^\varepsilon)dW_s.
\end{equation}
Obviously, it is equivalent to
\begin{equation*}
\left\{ \begin{aligned}
	d\hat{Y}_t^\varepsilon&=\frac 1\varepsilon \big[A \hat{Y}_t^\varepsilon+F_2 (X_{t(\delta)}^\varepsilon,\hat{Y}_t^\varepsilon)\big]dt+\frac 1{\sqrt{\varepsilon}} G_2(X_{t(\delta)}^\varepsilon,\hat{Y}_t^\varepsilon)dW_t,\\
	\hat{Y}_0^\varepsilon&=y \in \mathcal{H},
\end{aligned} \right.
\end{equation*}
where $t(\delta):=[\frac{t}{\delta}]\delta$ with $[x]$ being the floor function, i.e. $[x]$ is the largest integer not exceeding $x$.

Now we give an estimate of the error between the solution $Y_t^\varepsilon$ to Eq. (\ref{eqY}) and the auxiliary process $\hat{Y}_t^\varepsilon$, which plays an important role in the proof of Theorem \ref{MainRes}.

\begin{lemma}\label{AuxiliaryEstimate}
For any $T>0$ and $(x,y)\in \mathcal H_\eta \times \mathcal{H}$ with $\eta \in [0,\gamma)$ and $\gamma\in (1/3,\gamma_0)$, there exists a constant $C_{M,T}>0$ such that
\begin{equation}\label{AimAuxdifference}
\sup_{t\in [0,T]}\mathbb E \| Y_t^\varepsilon-\hat{Y}_t^\varepsilon\|_{\mathcal H}^4 \leq  C_{M,T} \delta^{4 \eta} (1+\|x\|_{\mathcal{H}_\eta}^4+\|y\|_{\mathcal{H}}^4 ),
\end{equation}
and
\begin{equation}\label{Auxest}
	\sup_{t\in[0,T]}\mathbb E\|\hat{Y}_t^\varepsilon\|_{\mathcal H}^2\leq C_T(1+\|x\|_{\mathcal H}^2+\|y\|_{\mathcal H}^2).
\end{equation}
\end{lemma}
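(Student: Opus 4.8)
The plan is to prove both bounds by energy estimates in the variational (Gelfand‑triple) framework, exactly in the spirit of the proof of Lemma~\ref{avlemma1}, exploiting the dissipativity $\langle A u,u\rangle_{\mathcal H}\le -\lambda_1\|u\|_{\mathcal H}^2$ from $(\mathbf H3)$ together with the gap condition $(\mathbf H4)$. The recurring mechanism is that the fast equation carries a factor $1/\varepsilon$ in front of both the drift and the squared diffusion, so the exponential contraction rate and the forcing term both scale like $1/\varepsilon$ and the small parameter cancels in the final bounds, which is what makes the estimates uniform in $\varepsilon$.

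For the second moment bound \eqref{Auxest}: since $F_2,G_2$ are globally Lipschitz in the second variable and, by \eqref{F2Lip} and \eqref{Glip}, of at most linear growth in the first (i.e. $\|F_2(x,0)\|_{\mathcal H}+\|G_2(x,0)\|_{\mathcal H^m}\le C(1+\|x\|_{\mathcal H})$), the auxiliary process is a well‑posed Itô SPDE with frozen slow input $X_{t(\delta)}^\varepsilon$. Applying Itô's formula to $\|\hat Y_t^\varepsilon\|_{\mathcal H}^2$ (cf. \cite[Theorem 4.2.5]{LR15}), using the above bounds and Young's inequality, one arrives at a differential inequality
$$\frac{d}{dt}\mathbb E\|\hat Y_t^\varepsilon\|_{\mathcal H}^2\le -\frac{2(\lambda_1-L_{F_2}-L_{G_2}^2)-\kappa}{\varepsilon}\,\mathbb E\|\hat Y_t^\varepsilon\|_{\mathcal H}^2+\frac{C}{\varepsilon}\big(1+\mathbb E\|X_{t(\delta)}^\varepsilon\|_{\mathcal H}^2\big),$$
where $\kappa>0$ is chosen so that the bracket remains positive; this is possible since $(\mathbf H4)$ forces $\lambda_1-L_{F_2}-L_{G_2}^2>2L_{G_2}^2\ge 0$. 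The comparison theorem then yields the claim once we insert $\sup_{s\in[0,T]}\mathbb E\|X_s^\varepsilon\|_{\mathcal H}^2\le C_T(1+\|x\|_{\mathcal H}^2+\|y\|_{\mathcal H}^2)$, which follows from Lemma~\ref{avlemma1} together with $\|X_s^\varepsilon\|_{\mathcal H}\le s^\eta\|X^\varepsilon\|_{\eta,0}+\|x\|_{\mathcal H}$.

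For the difference estimate \eqref{AimAuxdifference}: by Lemma~\ref{RSPDE}, $Y^\varepsilon$ solves the Itô SPDE \eqref{eqY}; subtracting \eqref{Auxeq} shows that $D_t^\varepsilon:=Y_t^\varepsilon-\hat Y_t^\varepsilon$ solves
\begin{align*}
dD_t^\varepsilon=&\frac{1}{\varepsilon}\big[AD_t^\varepsilon+F_2(X_t^\varepsilon,Y_t^\varepsilon)-F_2(X_{t(\delta)}^\varepsilon,\hat Y_t^\varepsilon)\big]dt\\
&+\frac{1}{\sqrt\varepsilon}\big[G_2(X_t^\varepsilon,Y_t^\varepsilon)-G_2(X_{t(\delta)}^\varepsilon,\hat Y_t^\varepsilon)\big]dW_t,\qquad D_0^\varepsilon=0.
\end{align*}
I would apply Itô's formula to $\|D_t^\varepsilon\|_{\mathcal H}^4$, bound $\langle AD_t^\varepsilon,D_t^\varepsilon\rangle_{\mathcal H}\le-\lambda_1\|D_t^\varepsilon\|_{\mathcal H}^2$, and split the coefficient increments through the mixed Lipschitz bounds \eqref{F2Lip}, \eqref{Glip} as $\|F_2(X_t^\varepsilon,Y_t^\varepsilon)-F_2(X_{t(\delta)}^\varepsilon,\hat Y_t^\varepsilon)\|_{\mathcal H}\le C\|X_t^\varepsilon-X_{t(\delta)}^\varepsilon\|_{\mathcal H}+L_{F_2}\|D_t^\varepsilon\|_{\mathcal H}$ (and likewise with $L_{G_2}$). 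Collecting the $\|D_t^\varepsilon\|_{\mathcal H}^4$ contributions produces the coefficient $-4(\lambda_1-L_{F_2}-3L_{G_2}^2)$ — which is precisely why $(\mathbf H4)$ is stated with the factor $3$ — while the terms containing $\|X_t^\varepsilon-X_{t(\delta)}^\varepsilon\|_{\mathcal H}$ are absorbed by Young's inequality into a forcing $\tfrac{C}{\varepsilon}\mathbb E\|X_t^\varepsilon-X_{t(\delta)}^\varepsilon\|_{\mathcal H}^4$. After picking a small $\kappa>0$ this gives
$$\frac{d}{dt}\mathbb E\|D_t^\varepsilon\|_{\mathcal H}^4\le-\frac{c_0}{\varepsilon}\mathbb E\|D_t^\varepsilon\|_{\mathcal H}^4+\frac{C}{\varepsilon}\mathbb E\|X_t^\varepsilon-X_{t(\delta)}^\varepsilon\|_{\mathcal H}^4,\qquad c_0:=4(\lambda_1-L_{F_2}-3L_{G_2}^2)-\kappa>0.$$
Since $D_0^\varepsilon=0$, the comparison theorem gives $\mathbb E\|D_t^\varepsilon\|_{\mathcal H}^4\le\tfrac{C}{\varepsilon}\int_0^t e^{-c_0(t-s)/\varepsilon}\mathbb E\|X_s^\varepsilon-X_{s(\delta)}^\varepsilon\|_{\mathcal H}^4\,ds$; applying Lemma~\ref{Xdifflem} with $h=s-s(\delta)\le\delta<1$ bounds the integrand by $\delta^{4\eta}C_{M,T}(1+\|x\|_{\mathcal H_\eta}^4+\|y\|_{\mathcal H}^4)$, and $\tfrac{1}{\varepsilon}\int_0^t e^{-c_0(t-s)/\varepsilon}\,ds\le 1/c_0$ yields the asserted bound, uniformly in $t\in[0,T]$ and $\varepsilon\in(0,1)$.

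The main obstacle is not the structure of the argument but the bookkeeping that makes $(\mathbf H4)$ do exactly what is needed. One must track the constant from the quadratic‑variation term in Itô's formula for $\|D_t^\varepsilon\|_{\mathcal H}^4$: writing $\Delta G_t:=G_2(X_t^\varepsilon,Y_t^\varepsilon)-G_2(X_{t(\delta)}^\varepsilon,\hat Y_t^\varepsilon)$, it contributes $2\|D_t^\varepsilon\|_{\mathcal H}^2\|\Delta G_t\|_{\mathcal H^m}^2+4\|\Delta G_t^{*}D_t^\varepsilon\|_{\mathbb R^m}^2\le 6\|D_t^\varepsilon\|_{\mathcal H}^2\|\Delta G_t\|_{\mathcal H^m}^2$, so that the surviving coefficient of $\|D_t^\varepsilon\|_{\mathcal H}^4$ is $-4\lambda_1+4L_{F_2}+12L_{G_2}^2$ and not something larger; one should similarly check that the second‑moment computation needs only the weaker inequality $\lambda_1-L_{F_2}-L_{G_2}^2>0$. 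A secondary point is to legitimately invoke Itô's formula in the variational setting for $D^\varepsilon$, which is licensed exactly as in Lemma~\ref{avlemma1}. Everything else — in particular the uniformity in $\varepsilon$ — is automatic from the shared $1/\varepsilon$ scaling.
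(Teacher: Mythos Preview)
Your proposal is correct and follows essentially the same approach as the paper: the paper also derives the SPDE for $Y^\varepsilon-\hat Y^\varepsilon$, applies It\^o's formula to its fourth power, uses the Lipschitz bounds \eqref{F2Lip}, \eqref{Glip} together with $(\mathbf H3)$ and Young's inequality to obtain the differential inequality with rate $c_0=4(\lambda_1-L_{F_2}-3L_{G_2}^2)-\kappa$, and then invokes the comparison theorem and Lemma~\ref{Xdifflem}; for \eqref{Auxest} the paper simply refers back to the argument for \eqref{Yest}, which is exactly the energy computation you sketch. Your bookkeeping of the quadratic-variation contribution ($2+4=6$, hence $12L_{G_2}^2$) matches the paper's and correctly explains the factor $3$ in $(\mathbf H4)$.
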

\begin{proof}
The proof of estimate (\ref{Auxest}) is similar to that of (\ref{Yest}), we only prove (\ref{AimAuxdifference}).
It is easy to see that
\begin{align*}
	 d ( Y_t^\varepsilon-\hat{Y}_t^\varepsilon)=&\frac 1\varepsilon  A (Y_t^\varepsilon-
	\hat{Y}_t^\varepsilon) dt
	+\frac 1\varepsilon \big[F_2(X_t^\varepsilon, Y_t^\varepsilon)-
	F_2(X_{t(\delta)}^\varepsilon,\hat{Y}_t^\varepsilon)\big]dt \\
&+ \frac 1{\sqrt{\varepsilon}}  \big[G_2(X_t^\varepsilon, Y_t^\varepsilon)-
	G_2(X_{t(\delta)}^\varepsilon,\hat{Y}_t^\varepsilon)\big]dW_t.
\end{align*}
Using It\^{o}'s formula we have
\begin{align*}
\| Y_t^\varepsilon-\hat{Y}_t^\varepsilon \|_{\mathcal{H}}^4
=&\frac 4\varepsilon \int_0^t \| Y_s^\varepsilon-\hat{Y}_s^\varepsilon \|_{\mathcal{H}}^2 \langle A (Y_s^\varepsilon-\hat{Y}_s^\varepsilon), Y_s^\varepsilon-\hat{Y}_s^\varepsilon \rangle_{\mathcal{H}} ds \\
&+\frac 4\varepsilon \int_0^t \| Y_s^\varepsilon-\hat{Y}_s^\varepsilon \|_{\mathcal{H}}^2 \langle F_2 (X_s^\varepsilon, Y_s^\varepsilon)-F_2(X_{s(\delta)}^\varepsilon,\hat{Y}_s^\varepsilon), Y_s^\varepsilon-\hat{Y}_s^\varepsilon \rangle_{\mathcal{H}} ds \\
&+\frac 4{\sqrt{\varepsilon}} \int_0^t \| Y_s^\varepsilon-\hat{Y}_s^\varepsilon \|_{\mathcal{H}}^2 \langle  [ G_2 (X_s^\varepsilon, Y_s^\varepsilon)-G_2(X_{s(\delta)}^\varepsilon,\hat{Y}_s^\varepsilon) ]dW_s, Y_s^\varepsilon-\hat{Y}_s^\varepsilon \rangle_{\mathcal{H}}  \\
&+\frac 2\varepsilon \int_0^t \| Y_s^\varepsilon-\hat{Y}_s^\varepsilon \|_{\mathcal{H}}^2 \| G_2 (X_s^\varepsilon, Y_s^\varepsilon)-G_2(X_{s(\delta)}^\varepsilon,\hat{Y}_s^\varepsilon) \|_{\mathcal{H}^m}^2 ds \\
&+\frac 4\varepsilon \int_0^t \| (G_2 (X_s^\varepsilon, Y_s^\varepsilon)-G_2(X_{s(\delta)}^\varepsilon,\hat{Y}_s^\varepsilon) )^* (Y_s^\varepsilon-\hat{Y}_s^\varepsilon)\|_{\mathbb R^m}^2 ds .
\end{align*}
It is straightforward that
\begin{align*}
	\frac d{dt}\mathbb E\| Y_t^\varepsilon-\hat{Y}_t^\varepsilon \|_{\mathcal{H}}^4
\leq &\frac 4\varepsilon \mathbb E \big[ \| Y_t^\varepsilon-\hat{Y}_t^\varepsilon \|_{\mathcal{H}}^2  \langle A (Y_t^\varepsilon-\hat{Y}_t^\varepsilon), Y_t^\varepsilon-\hat{Y}_t^\varepsilon \rangle_{\mathcal{H}} \big] \\
&+\frac 4\varepsilon \mathbb E \big[ \| Y_t^\varepsilon-\hat{Y}_t^\varepsilon \|_{\mathcal{H}}^2 \langle F_2 (X_t^\varepsilon, Y_t^\varepsilon)-F_2(X_{t(\delta)}^\varepsilon,\hat{Y}_t^\varepsilon), Y_t^\varepsilon-\hat{Y}_t^\varepsilon \rangle_{\mathcal{H}} \big] \\
&+\frac 6\varepsilon \mathbb E \big[ \| Y_t^\varepsilon-\hat{Y}_t^\varepsilon \|_{\mathcal{H}}^2 \| G_2 (X_t^\varepsilon, Y_t^\varepsilon)-G_2(X_{t(\delta)}^\varepsilon,\hat{Y}_t^\varepsilon) \|_{\mathcal{H}^m}^2 \big].
\end{align*}
By conditions (\ref{F2Lip}), (\ref{Glip}) and Young's inequality, we get
\begin{align*}
	\frac d{dt}\mathbb E\|  Y_t^\varepsilon-\hat{Y}_t^\varepsilon\|_{\mathcal{H}}^4
	\leq& -\frac {4 \lambda_1}\varepsilon \mathbb E \|Y_t^\varepsilon-\hat{Y}_t^\varepsilon \|_{\mathcal{H}}^4 +\frac 4\varepsilon \mathbb E \big[ \| Y_t^\varepsilon-\hat{Y}_t^\varepsilon \|_{\mathcal{H}}^3 \big(C \|X_t^\varepsilon-X_{t(\delta)}^\varepsilon\|_{\mathcal{H}} +L_{F_2} \| Y_t^\varepsilon-\hat{Y}_t^\varepsilon \|_{\mathcal{H}} \big) \big] \\
&+\frac 6\varepsilon \mathbb E \big[ \| Y_t^\varepsilon-\hat{Y}_t^\varepsilon \|_{\mathcal{H}}^2 \big(C \|X_t^\varepsilon-X_{t(\delta)}^\varepsilon\|_{\mathcal{H}}^2 +2L_{G_2}^2 \| Y_t^\varepsilon-\hat{Y}_t^\varepsilon \|_{\mathcal{H}}^2 \big) \big] \\
\leq& -\frac {4(\lambda_1- L_{F_2}-3 L_{G_2}^2) }\varepsilon \mathbb E \|Y_t^\varepsilon-\hat{Y}_t^\varepsilon \|_{\mathcal{H}}^4 + \frac C\varepsilon \mathbb E \big[\| Y_t^\varepsilon-\hat{Y}_t^\varepsilon \|_{\mathcal{H}}^3\|X_t^\varepsilon-X_{t(\delta)}^\varepsilon\|_{\mathcal{H}}  \big] \\
&+ \frac C\varepsilon \mathbb E \big[\| Y_t^\varepsilon-\hat{Y}_t^\varepsilon \|_{\mathcal{H}}^2\|X_t^\varepsilon-X_{t(\delta)}^\varepsilon\|_{\mathcal{H}}^2  \big]\\
\leq & -\frac {4(\lambda_1- L_{F_2}-3 L_{G_2}^2)-\kappa }\varepsilon \mathbb E \|Y_t^\varepsilon-\hat{Y}_t^\varepsilon \|_{\mathcal{H}}^4
+\frac C\varepsilon \mathbb E \|X_t^\varepsilon-X_{t(\delta)}^\varepsilon\|_{\mathcal{H}}^4,
\end{align*}
where we used Young's inequality for a constant $\kappa$ satisfying $4(\lambda_1- L_{F_2}-3 L_{G_2}^2)-\kappa>0$.
The comparison theorem implies that
\begin{align*}
\mathbb E\|  Y_t^\varepsilon-\hat{Y}_t^\varepsilon\|_{\mathcal{H}}^4
&\leq  \frac C\varepsilon \int_0^t e^{-\frac {4(\lambda_1- L_{F_2}-3 L_{G_2}^2)-\kappa }\varepsilon (t-s)} \mathbb E \|X_s^\varepsilon-X_{s(\delta)}^\varepsilon\|_{\mathcal{H}}^4 ds  \\
&\leq \sup_{s\in [0,T] }\mathbb E\|X_s^\varepsilon-X_{s(\delta)}^\varepsilon\|_{\mathcal{H}}^4 \cdot \frac C\varepsilon \int_0^t e^{-\frac {4(\lambda_1- L_{F_2}-3 L_{G_2}^2)-\kappa }\varepsilon (t-s)}ds  \\
&\leq  \delta^{4\eta}  C_{M,T} (1+\|x\|_{\mathcal{H}_\eta}^4+\|y\|_{\mathcal{H}}^4),
\end{align*}
where we used Lemma \ref{Xdifflem} for the last inequality. The proof is complete.     \hfill $\square$
\end{proof}

\subsection{Proof of Theorem \ref{MainRes}}\label{sub3.5}

We are now going to prove Theorem \ref{MainRes}.
Before giving the specific proof, we give the following two lemmas, the proofs of which are similar to \cite[Lemma 2.2 and Lemma 2.3]{MG23},
and we omit the details of the proof.

\begin{lemma}\label{Glem1}
 Let $T> 0$ and $\mathbf B \in \mathscr{C}^\gamma([0,T]; \mathbb R^d)$ for some $\gamma \in (1/3, \gamma_0)$. Then for any $(Y, Y^\prime) \in \mathcal{D} _{\mathbf B}^{2\gamma , \eta }([ 0, T] ; \mathcal{H} )$ with $\eta \in [0,\gamma)$ and $ G_1 \in C_{- 2\gamma , 0}^3( \mathcal{H} ; \mathcal{H}^d)$, we have
$$\Big(\int_0^\cdot S_{\cdot-u} G_1 (Y_u) d\mathbf{B}_u, G_1(Y) \Big) \in \mathcal{D}_{\mathbf B}^{2\gamma,\eta}([0,T];\mathcal{H}),$$
and
\begin{align}\label{IntG}
&\Big\| \int_0^{\cdot} S_{\cdot-u} G_1 (Y_u) d\mathbf{B}_u , G_1 (Y) \Big\|_{\mathcal{D}_{\mathbf B}^{2\gamma,\eta}}  \nonumber\\
 \lesssim & \|G_1 (Y)_0^\prime\|_{\mathcal H_{-2\gamma}^{d \times d}}+\|G_1 (Y)\|_{\infty,0}
+T^{\gamma-\eta}(1+|B|_\gamma+|B^2|_{2\gamma})\|G_1 (Y),G_1 (Y)^{\prime}\|_{\mathscr{D}_{S,\mathbf B}^{2\gamma,2\gamma,0}}.
\end{align}
\end{lemma}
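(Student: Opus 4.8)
The plan is to realize $(U,G_1(Y))$, with $U:=\int_0^{\cdot}S_{\cdot-u}G_1(Y_u)\,d\mathbf B_u$, as the image of $(Y,Y')$ under two operations already at our disposal: composition with a smooth map (Lemma \ref{funCRP}) followed by rough convolution (Theorem \ref{RoughInt}). Since $(Y,Y')\in\mathcal D_{\mathbf B}^{2\gamma,\eta}([0,T];\mathcal H)=\mathscr D_{S,\mathbf B}^{2\gamma,2\gamma,\eta}([0,T];\mathcal H_{-2\gamma})$, we have $(Y,Y')\in\mathscr D_{S,\mathbf B}^{2\gamma}([0,T];\mathcal H_{-2\gamma})$ together with $Y\in\hat C^{\eta}([0,T];\mathcal H_0)$ and $Y'\in\mathcal L^{\infty}([0,T];\mathcal H_0^d)$. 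Because $G_1\in C_{-2\gamma,0}^3(\mathcal H;\mathcal H^d)\subset C_{-2\gamma,0}^2(\mathcal H;\mathcal H^d)$, Lemma \ref{funCRP} with $\alpha=-2\gamma$ applies and gives that $(Z,Z'):=(G_1(Y),G_1(Y)')$, where $G_1(Y)'=DG_1(Y)\circ Y'$, belongs to $\mathscr D_{S,\mathbf B}^{2\gamma}([0,T];\mathcal H_{-2\gamma}^d)$; the uniform boundedness of $G_1$ and $DG_1$ moreover yields $\|G_1(Y)\|_{\infty,0}\le\|G_1\|_{\infty}$ and $\|G_1(Y)'\|_{\infty,0}\le\|DG_1\|_{\infty}\|Y'\|_{\infty,0}$, so in fact $(G_1(Y),G_1(Y)')\in\mathscr D_{S,\mathbf B}^{2\gamma,2\gamma,0}([0,T];\mathcal H_{-2\gamma}^d)$ with finite norm. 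Theorem \ref{RoughInt} applied to $(Z,Z')$ then shows that $U$ exists in $\hat C^{\gamma}([0,T];\mathcal H_{-2\gamma})$, that $(U,Z)=(U,G_1(Y))\in\mathscr D_{S,\mathbf B}^{2\gamma}([0,T];\mathcal H_{-2\gamma})$, and — importantly — that the Gubinelli derivative of $U$ is exactly $G_1(Y)$.

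The one step that is not purely formal is upgrading the spatial regularity of $U$ from $\mathcal H_{-2\gamma}$ to $\mathcal H_0$, which membership in $\mathscr D_{S,\mathbf B}^{2\gamma,2\gamma,\eta}([0,T];\mathcal H_{-2\gamma})$ additionally demands. Writing $\hat\delta U_{t,s}=\int_s^t S_{t-u}G_1(Y_u)\,d\mathbf B_u$, I would invoke the bound (\ref{IntBound}) of Theorem \ref{RoughInt} with $\alpha=-2\gamma$ and $\beta=2\gamma$ (permitted since $2\gamma<3\gamma$), which controls $\int_s^t S_{t-u}G_1(Y_u)\,d\mathbf B_u-S_{t-s}G_1(Y_s)\delta B_{t,s}-S_{t-s}G_1(Y)_s'B^2_{t,s}$ in $\mathcal H_0$ by $\big(|R^{G_1(Y)}|_{2\gamma,-2\gamma}|B|_{\gamma}+\|G_1(Y)'\|_{\gamma,-2\gamma}|B^2|_{2\gamma}\big)|t-s|^{\gamma}$. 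The two subtracted terms are estimated directly in $\mathcal H_0$: contractivity of $(S_t)_{t\ge0}$ on $\mathcal H_0$, together with $G_1(Y_s)\in\mathcal H_0^d$ and $G_1(Y)_s'\in\mathcal H_0^{d\times d}$ and their uniform bounds, gives $\|S_{t-s}G_1(Y_s)\delta B_{t,s}\|_{\mathcal H_0}\lesssim\|G_1(Y)\|_{\infty,0}|B|_{\gamma}|t-s|^{\gamma}$ and $\|S_{t-s}G_1(Y)_s'B^2_{t,s}\|_{\mathcal H_0}\lesssim\|G_1(Y)'\|_{\infty,0}|B^2|_{2\gamma}|t-s|^{2\gamma}$. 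Adding these and using $|t-s|^{\gamma}\le T^{\gamma-\eta}|t-s|^{\eta}$, $|t-s|^{2\gamma}\le T^{2\gamma-\eta}|t-s|^{\eta}$ shows $U\in\hat C^{\eta}([0,T];\mathcal H_0)$, with $\|U\|_{\eta,0}\lesssim T^{\gamma-\eta}(1+|B|_{\gamma}+|B^2|_{2\gamma})\,\|G_1(Y),G_1(Y)'\|_{\mathscr D_{S,\mathbf B}^{2\gamma,2\gamma,0}}$, once one notes that $|R^{G_1(Y)}|_{2\gamma,-2\gamma}$, $\|G_1(Y)'\|_{\gamma,-2\gamma}$, $\|G_1(Y)'\|_{\infty,0}$ and $\|G_1(Y)\|_{\infty,0}$ are all constituents of $\|G_1(Y),G_1(Y)'\|_{\mathscr D_{S,\mathbf B}^{2\gamma,2\gamma,0}}$.

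It then remains to assemble the norm. With $(U,G_1(Y))$ playing the roles of $(Y,Y')$ in Definition \ref{CRP}, one has $\|U,G_1(Y)\|_{\mathcal D_{\mathbf B}^{2\gamma,\eta}}=\|U_0\|_{\mathcal H_0}+\|G_1(Y_0)\|_{\mathcal H_{-2\gamma}^d}+\|U\|_{\eta,0}+\|G_1(Y)\|_{\infty,0}+\|G_1(Y)\|_{\gamma,-2\gamma}+|R^U|_{2\gamma,-2\gamma}$. Here $U_0=0$; $\|G_1(Y_0)\|_{\mathcal H_{-2\gamma}^d}\le\|G_1(Y)\|_{\infty,0}$; $\|G_1(Y)\|_{\gamma,-2\gamma}$ is bounded, via (\ref{Y}) applied to the controlled path $G_1(Y)$, by $C(1+|B|_{\gamma})\,\|G_1(Y),G_1(Y)'\|_{\mathscr D_{S,\mathbf B}^{2\gamma,2\gamma,0}}$; $\|U\|_{\eta,0}$ was bounded in the previous step; and $|R^U|_{2\gamma,-2\gamma}$ is handled by applying (\ref{IntBound}) once more with $\beta=0$, writing $R^U_{t,s}=\big(\hat\delta U_{t,s}-S_{t-s}G_1(Y_s)\delta B_{t,s}-S_{t-s}G_1(Y)_s'B^2_{t,s}\big)+S_{t-s}G_1(Y)_s'B^2_{t,s}$ and estimating the first group by $(\,|R^{G_1(Y)}|_{2\gamma,-2\gamma}|B|_\gamma+\|G_1(Y)'\|_{\gamma,-2\gamma}|B^2|_{2\gamma}\,)|t-s|^{3\gamma}$ and the last term by $\|G_1(Y)'\|_{\infty,0}|B^2|_{2\gamma}|t-s|^{2\gamma}$. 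Collecting all of the above — and letting the implicit constant depend on $T$ and on the bounds of $G_1,DG_1,D^2G_1$ — yields precisely the estimate (\ref{IntG}).

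As for difficulty: none of the steps is deep, and the argument mirrors \cite[Lemma 2.2 and Lemma 2.3]{MG23} and the corresponding statements in \cite{GH19}. The only place calling for real care is the second paragraph — the gain of regularity from $\mathcal H_{-2\gamma}$ to $\mathcal H_0$ — where one must check that the exponent delivered by (\ref{IntBound}) with $\beta=2\gamma$ is $3\gamma-2\gamma=\gamma>\eta$, so that it can be traded for the $\eta$-H\"older bound at the cost of the factor $T^{\gamma-\eta}$, and where one must verify that the two ``local expansion'' terms $S_{t-s}G_1(Y_s)\delta B_{t,s}$ and $S_{t-s}G_1(Y)_s'B^2_{t,s}$ are themselves $\mathcal H_0$-bounded: this is precisely why the hypotheses $G_1\in C_{-2\gamma,0}^3(\mathcal H;\mathcal H^d)$ (so that $G_1$ and $DG_1$ send $\mathcal H_0$-valued arguments to uniformly bounded $\mathcal H_0$-valued quantities) and $Y'\in\mathcal L^{\infty}([0,T];\mathcal H_0^d)$ are needed. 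Everything else is bookkeeping of norms across the scale of interpolation spaces.
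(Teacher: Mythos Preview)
Your approach is correct and is precisely the one implicit in the paper's reference to \cite[Lemma 2.2]{MG23}: push $(Y,Y')$ through Lemma~\ref{funCRP} to obtain $(G_1(Y),G_1(Y)')\in\mathscr{D}_{S,\mathbf B}^{2\gamma,2\gamma,0}([0,T];\mathcal H_{-2\gamma}^d)$, feed this into Theorem~\ref{RoughInt}, and then use (\ref{IntBound}) with $\beta=2\gamma$ to upgrade $U$ to $\hat C^{\eta}([0,T];\mathcal H_0)$ at the cost of the factor $T^{\gamma-\eta}$. One small refinement: when you bound $\|G_1(Y)\|_{\gamma,-2\gamma}$ via (\ref{Y}), the term $\|G_1(Y)'\|_{\infty,-2\gamma}|B|_\gamma$ does not directly carry a $T^{\gamma-\eta}$ prefactor; to match (\ref{IntG}) exactly you should instead write $\|G_1(Y)'\|_{\infty,-2\gamma}\le \|G_1(Y)_0'\|_{\mathcal H_{-2\gamma}^{d\times d}}+T^\gamma\|G_1(Y)'\|_{\gamma,-2\gamma}$ before multiplying by $|B|_\gamma$, which splits it into the stated ``non-small'' initial term and a piece with the required $T$-power.
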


\begin{lemma}\label{Glem2}
Let $T> 0$, $G_1 \in C_{- 2\gamma , 0}^3( \mathcal{H} ; \mathcal{H}^d)$ and $ (Y, Y^{\prime}), ( V, V^{\prime }) \in \mathcal{D} _{\mathbf B}^{2\gamma , \eta }( [ 0, T] ; \mathcal{H} ) $ for some $\mathbf{B}\in \mathscr{C}^{\gamma }( [ 0, T] ; \mathbb{R}^d)$ with $\gamma \in (1/3, \gamma_0)$
and there exists $M > 0$ such that $|B| _{\gamma }, |B^2| _{2\gamma}, \|Y, Y^{\prime}\|_{\mathcal{D}_{\mathbf B}^{2\gamma,\eta}}$, $\|V, V^{\prime }\|_{\mathcal{D}_{\mathbf B}^{2\gamma,\eta}} <M$. Then the following estimate holds true
$$\|G_1(Y)-G_1(V),(G_1(Y)-G_1(V))^\prime\|_{\mathscr{D}_{S,\mathbf B}^{2\gamma,2\gamma,0}}\leq C_{M,T}(1+|B|_{\gamma})^2 \|Y-V,(Y-V)^\prime\|_{\mathcal{D}_{\mathbf B}^{2\gamma,\eta}}.$$
\end{lemma}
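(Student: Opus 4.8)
The plan is to expand the Gubinelli derivative and the remainder of $G_1(Y)-G_1(V)$ into telescoping sums, following the computation in the proof of Lemma~\ref{Xetaest} but carrying a difference factor throughout, and to bound each resulting term by a power of $|t-s|$ times $C_{M,T}(1+|B|_\gamma)^2\|W,W'\|_{\mathcal D_{\mathbf B}^{2\gamma,\eta}}$, where I abbreviate $W:=Y-V$, $W':=Y'-V'$. Note first that, by subadditivity of $\|\cdot\|_{\mathcal D_{\mathbf B}^{2\gamma,\eta}}$ and the hypothesis, $\|W,W'\|_{\mathcal D_{\mathbf B}^{2\gamma,\eta}}<2M$, so whenever a product of two difference factors appears one of them may be absorbed into the constant; similarly, via (\ref{Y}), $\|Y\|_{\gamma,-2\gamma},\|V\|_{\gamma,-2\gamma}\le C(1+|B|_\gamma)M$, hence $|Y|_{\gamma,-2\gamma},|V|_{\gamma,-2\gamma}\le C_{M,T}(1+|B|_\gamma)$, while $\|W\|_{\gamma,-2\gamma}\le C(1+|B|_\gamma)\|W,W'\|_{\mathcal D_{\mathbf B}^{2\gamma,\eta}}$. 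Recall also that $\|G_1(Y)-G_1(V),(G_1(Y)-G_1(V))'\|_{\mathscr D_{S,\mathbf B}^{2\gamma,2\gamma,0}}$ is the sum of the six quantities $\|G_1(Y_0)-G_1(V_0)\|_{\mathcal H^d}$, $\|DG_1(Y_0)\circ Y_0'-DG_1(V_0)\circ V_0'\|_{\mathcal H_{-2\gamma}^{d\times d}}$, $\|G_1(Y)-G_1(V)\|_{\infty,0}$, $\|DG_1(Y)\circ Y'-DG_1(V)\circ V'\|_{\infty,0}$, $\|DG_1(Y)\circ Y'-DG_1(V)\circ V'\|_{\gamma,-2\gamma}$ and $|R^{G_1(Y)}-R^{G_1(V)}|_{2\gamma,-2\gamma}$, and I treat them in turn.

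The first four quantities are elementary. For $\|G_1(Y_0)-G_1(V_0)\|_{\mathcal H^d}$ and $\|G_1(Y)-G_1(V)\|_{\infty,0}$ apply the mean value inequality with the bound on $DG_1\colon\mathcal H\to\mathcal H^d$ together with $\|W\|_{\infty,0}\le T^\eta\|W\|_{\eta,0}+\|W_0\|_{\mathcal H}\le C_T\|W,W'\|_{\mathcal D_{\mathbf B}^{2\gamma,\eta}}$. For the Gubinelli derivatives write $DG_1(u)\circ u'-DG_1(v)\circ v'=(DG_1(u)-DG_1(v))\circ u'+DG_1(v)\circ(u'-v')$, bound the first summand by $\|D^2G_1\|_\infty\|u-v\|\,\|u'\|$ and the second by $\|DG_1\|_\infty\|u'-v'\|$, and specialise to $(u,v)=(Y_0,V_0)$, respectively $(Y_t,V_t)$, using $\|Y_0'\|_{\mathcal H_{-2\gamma}^d},\|Y'\|_{\infty,0}<M$.

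For the fifth quantity one expands $\hat\delta\bigl(DG_1(Y)\circ Y'-DG_1(V)\circ V'\bigr)_{t,s}$ via the same splitting as a sum of two brackets: the first, $DG_1(Y_t)\circ\hat\delta W'_{t,s}+\bigl(DG_1(Y_t)-DG_1(Y_s)\bigr)\circ S_{t-s}W_s'+\bigl(S_{t-s}DG_1(Y_s)-DG_1(Y_s)S_{t-s}\bigr)W_s'$, and the second of the same shape with $DG_1(Y)$ replaced by the increment $DG_1(Y)-DG_1(V)$ and $W'$ by $V'$. Each term is controlled by the smoothing estimates (\ref{semigroup}), the boundedness of $DG_1,D^2G_1$, the Hölder bound $\|\hat\delta W'_{t,s}\|\le\|W'\|_{\gamma,-2\gamma}|t-s|^\gamma$, and the increment bounds recalled above; the commutator terms are written as $(S_{t-s}-Id)(\cdot)-(\cdot)(S_{t-s}-Id)$ and, since the relevant quantities lie in $\mathcal H_0$, gain an extra factor $|t-s|^\gamma$, while the difference $\bigl(DG_1(Y_t)-DG_1(V_t)\bigr)-\bigl(DG_1(Y_s)-DG_1(V_s)\bigr)$ is handled by a second-order Taylor expansion of $DG_1$. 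This yields $\|DG_1(Y)\circ Y'-DG_1(V)\circ V'\|_{\gamma,-2\gamma}\le C_{M,T}(1+|B|_\gamma)\|W,W'\|_{\mathcal D_{\mathbf B}^{2\gamma,\eta}}$.

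The sixth quantity is the crux. Since the Gubinelli derivative of $G_1(Y)$ is exactly $DG_1(Y)\circ Y'$, one has $R^{G_1(Y)}_{t,s}-R^{G_1(V)}_{t,s}=\bigl(\hat\delta(G_1(Y))_{t,s}-S_{t-s}(DG_1(Y_s)\circ Y_s')\delta B_{t,s}\bigr)-(\text{same with }V)$, so it suffices to expand each $R^{G_1(Y)}_{t,s}$ as in the proof of Lemma~\ref{Xetaest} — namely $R^{G_1(Y)}_{t,s}=I_5+DG_1(Y_t)R^Y_{t,s}+(I_2+I_3+I_4)\delta B_{t,s}$, with $I_5$ a second-order Taylor term plus the commutator $G_1(S_{t-s}Y_s)-S_{t-s}G_1(Y_s)$, and $I_2,I_3,I_4$ the increment/commutator pieces of $\hat\delta(DG_1(Y)\circ Y')$ — subtract the corresponding $V$-expansion, and estimate term by term. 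The Taylor part of $I_5$ now needs a third-order expansion of $G_1$ to extract one factor $\|W\|$ while retaining $|t-s|^{2\gamma}$ (this is why $G_1\in C^3$ is used); $DG_1(Y_t)R^Y_{t,s}-DG_1(V_t)R^V_{t,s}$ is split as $(DG_1(Y_t)-DG_1(V_t))R^Y_{t,s}+DG_1(V_t)R^W_{t,s}$ using $R^Y_{t,s}-R^V_{t,s}=R^W_{t,s}$; and the $(I_2+I_3+I_4)\delta B$ pieces pick up the factor $|\delta B_{t,s}|\le|B|_\gamma|t-s|^\gamma$ which, multiplied by the $(1+|B|_\gamma)$ already produced when estimating the increments of $Y,V$ through (\ref{Y}), gives the quadratic factor $(1+|B|_\gamma)^2$. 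Summing the six bounds yields the claim. The main obstacle is precisely the bookkeeping in this last step: tracking the correct Hölder exponents ($2\gamma$ for the remainder, $\gamma$ apiece for $\delta B$ and for the increments of $Y,V,W$), ensuring each difference of a second-order object is Taylor-expanded one further order so as to stay linear in $\|W,W'\|_{\mathcal D_{\mathbf B}^{2\gamma,\eta}}$, and checking that no term produces more than two powers of $1+|B|_\gamma$.
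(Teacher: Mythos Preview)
Your proposal is correct and follows precisely the route the paper has in mind: the paper does not spell out a proof but defers to \cite[Lemma~2.3]{MG23}, and your decomposition---splitting $DG_1(Y)\circ Y'-DG_1(V)\circ V'=DG_1(Y)\circ W'+(DG_1(Y)-DG_1(V))\circ V'$, then expanding each $\hat\delta$-increment and each remainder exactly as in the $I_2$--$I_6$ computation of Lemma~\ref{Xetaest} while carrying one extra difference factor and using one more order of Taylor expansion (hence the $C^3$ hypothesis)---is that argument. The only cosmetic slip is a sign in your commutator term for the fifth quantity (it should read $(DG_1(Y_s)S_{t-s}-S_{t-s}DG_1(Y_s))W_s'$), which of course does not affect the estimate.
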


\begin{theorem}\label{th3}
Assume that the conditions in Theorem \emph{\ref{MainRes}} hold. For any $x\in \mathcal{H}_\eta$ and $y\in \mathcal{H}$ with $\eta \in (\gamma-1/4, \gamma)$, $\gamma \in (1/3, \gamma_0)$ and $1/3 <\gamma_0 \leq 1/2$, we have
\begin{equation*}
	\lim_{\varepsilon\rightarrow0}\mathbb E\big[\|X^\varepsilon-\bar{X}\|_{\eta, 0}^{2}\big]=0.
\end{equation*}
\end{theorem}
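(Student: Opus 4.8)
The plan is to translate the problem to controlled rough paths, localise onto a set where the relevant norms are controlled, and then run Khasminskii's time‑discretisation argument, estimating everything in the norm of $\mathcal D_{\mathbf B}^{2\gamma,\eta}([0,T];\mathcal H)$ (which dominates $\|\cdot\|_{\eta,0}$). For $M>0$ let $\Omega_M$ be the event on which $|B|_\gamma$, $|B^2|_{2\gamma}$, $\|X^\varepsilon,G_1(X^\varepsilon)\|_{\mathcal D_{\mathbf B}^{2\gamma,\eta}}$ and $\|\bar X,G_1(\bar X)\|_{\mathcal D_{\mathbf B}^{2\gamma,\eta}}$ are all $<M$. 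Splitting $\mathbb E[\|X^\varepsilon-\bar X\|_{\eta,0}^2]$ over $\Omega_M$ and $\Omega_M^c$, on $\Omega_M^c$ one uses Cauchy--Schwarz together with the uniform fourth‑moment bounds of Lemma~\ref{avlemma1} and Lemma~\ref{AveragedEstimate} and the integrability of the rough‑path norms from $(\mathbf H5)$ to get $\mathbb E[\|X^\varepsilon-\bar X\|_{\eta,0}^2\mathbf 1_{\Omega_M^c}]\le C(1+\|x\|_{\mathcal H_\eta}^2+\|y\|_{\mathcal H}^2)\,\mathbb P(\Omega_M^c)^{1/2}$, which tends to $0$ as $M\to\infty$, uniformly in $\varepsilon$. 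Hence it suffices to prove, for each fixed $M$, that $\mathbb E[\|X^\varepsilon-\bar X\|_{\eta,0}^2\mathbf 1_{\Omega_M}]\to0$ as $\varepsilon\to0$.

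Working on $\Omega_M$, the mild formulations give $X_t^\varepsilon-\bar X_t=\int_0^t S_{t-s}\big(F_1(X_s^\varepsilon,Y_s^\varepsilon)-\bar F_1(\bar X_s)\big)ds+\int_0^t S_{t-s}\big(G_1(X_s^\varepsilon)-G_1(\bar X_s)\big)d\mathbf B_s$, so $(X^\varepsilon-\bar X,G_1(X^\varepsilon)-G_1(\bar X))$ splits into a drift convolution (a controlled rough path whose initial Gubinelli derivative vanishes, since $DG_1(x)G_1(x)$ cancels) plus a rough convolution. For the rough convolution I would use linearity of the rough integral together with Lemmas~\ref{Glem1}--\ref{Glem2}: on a subinterval of length $\ell$ its $\mathcal D_{\mathbf B}^{2\gamma,\eta}$‑norm is $\le C_M(\ell^\eta+\ell^{\gamma-\eta})\big(\|(X^\varepsilon-\bar X)_{t_i}\|_{\mathcal H}+\|X^\varepsilon-\bar X,\cdot\,\|_{\mathcal D_{\mathbf B,[t_i,t_{i+1}]}^{2\gamma,\eta}}\big)$, the last term being absorbed once $\ell=\ell(M)$ is chosen small. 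For the drift part I would insert the auxiliary process $\hat Y^\varepsilon$ of \eqref{Auxeq} and the frozen slow value $X_{s(\delta)}^\varepsilon$, writing $F_1(X_s^\varepsilon,Y_s^\varepsilon)-\bar F_1(\bar X_s)$ as the sum of (I) $F_1(X_s^\varepsilon,Y_s^\varepsilon)-F_1(X_s^\varepsilon,\hat Y_s^\varepsilon)$, (II) $F_1(X_s^\varepsilon,\hat Y_s^\varepsilon)-F_1(X_{s(\delta)}^\varepsilon,\hat Y_s^\varepsilon)$, (III) $F_1(X_{s(\delta)}^\varepsilon,\hat Y_s^\varepsilon)-\bar F_1(X_{s(\delta)}^\varepsilon)$, (IV) $\bar F_1(X_{s(\delta)}^\varepsilon)-\bar F_1(\bar X_s)$. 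Using the standard bound $\mathbb E[\|\int_0^\cdot S_{\cdot-u}\Phi_u\,du\|_{\eta,0}^2]\le C_T\sup_u\mathbb E\|\Phi_u\|_{\mathcal H}^2$ (a consequence of \eqref{semigroup}), $(\mathbf H1)$, Lipschitz continuity of $\bar F_1$, Lemma~\ref{AuxiliaryEstimate} for (I) and Lemma~\ref{Xdifflem} for (II) and for the term $\bar F_1(X_{s(\delta)}^\varepsilon)-\bar F_1(X_s^\varepsilon)$, the convolutions of (I), (II) and of that term of (IV) each contribute an $L^2(\Omega;\Omega_M)$‑quantity of order $\delta^{2\eta}C_{M,T}(1+\|x\|_{\mathcal H_\eta}^2+\|y\|_{\mathcal H}^2)$; the remaining part $\bar F_1(X_s^\varepsilon)-\bar F_1(\bar X_s)$ of (IV), bounded by $C\|X_s^\varepsilon-\bar X_s\|_{\mathcal H}$, is kept for the concluding Gronwall step.

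The crux is term (III), which carries the actual averaging. Here I would freeze the slow component on each block $[k\delta,(k+1)\delta)$ and exploit that, conditionally on $\mathscr F_{k\delta}$, the process $r\mapsto\hat Y_{k\delta+\varepsilon r}^\varepsilon$ has the law of the frozen solution $Y_r^{x,y}$ with $x=X_{k\delta}^\varepsilon$, $y=\hat Y_{k\delta}^\varepsilon$ (by Brownian scaling applied to \eqref{Auxeq}), so that the exponential ergodicity of Lemma~\ref{ergo} yields $\|\mathbb E[F_1(X_{k\delta}^\varepsilon,\hat Y_r^\varepsilon)-\bar F_1(X_{k\delta}^\varepsilon)\mid\mathscr F_{k\delta}]\|_{\mathcal H}\le Ce^{-\rho(r-k\delta)/\varepsilon}(1+\|X_{k\delta}^\varepsilon\|_{\mathcal H}+\|\hat Y_{k\delta}^\varepsilon\|_{\mathcal H})$. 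Combining this with the moment bounds of Lemma~\ref{lem3.4} and Lemma~\ref{AuxiliaryEstimate} and the classical block‑by‑block Khasminskii estimate of the time integral (treating within‑block and cross‑block contributions separately, and separating the short near‑diagonal part of the convolution, where no averaging is available, from the far part), one obtains that the $L^2(\Omega;\Omega_M)$‑norm of the $\mathcal D_{\mathbf B}^{2\gamma,\eta}$‑norm of the term‑(III) convolution is $\le C_{M,T}\big(\delta^\theta+(\varepsilon/\delta)^{\theta'}\big)$ for some $\theta,\theta'>0$; the restriction $\eta\in(\gamma-1/4,\gamma)$ in the statement is precisely what makes this argument work in the semigroup‑weighted H\"older topology. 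Finally I would assemble everything by concatenation over the finitely many subintervals of $[0,T]$ of length $\ell(M)$, passing the endpoint difference $X_{t_i}^\varepsilon-\bar X_{t_i}$ along in the $\mathcal H$‑norm (this is how the initial datum enters the $\mathcal D_{\mathbf B}^{2\gamma,\eta}$‑norm, and $X_0^\varepsilon-\bar X_0=0$), applying a discrete Gronwall across these intervals to absorb the (IV)‑remainder, and using $S_{t-s}=S_{t-t_i}S_{t_i-s}$ to recover the full $\|\cdot\|_{\eta,0}$‑seminorm on $[0,T]$ from the pieces. This gives $\mathbb E[\|X^\varepsilon-\bar X\|_{\eta,0}^2\mathbf 1_{\Omega_M}]\le C_{M,T}\big(\delta^\theta+(\varepsilon/\delta)^{\theta'}\big)$; choosing $\delta=\delta(\varepsilon)$ with $\delta\to0$ and $\varepsilon/\delta\to0$ (for instance $\delta=\sqrt\varepsilon$) makes the right‑hand side vanish as $\varepsilon\to0$ for each fixed $M$, and then letting $M\to\infty$ yields \eqref{Averaging}. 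I expect the main obstacle to be term (III): carrying the ergodic/averaging gain through the controlled‑rough‑path (semigroup‑weighted H\"older) norm rather than a plain supremum norm, while keeping control of the $\mathcal H_{-2\gamma}$‑valued remainder terms.
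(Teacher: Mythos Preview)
Your proposal follows essentially the same strategy as the paper: work in the $\mathcal D_{\mathbf B}^{2\gamma,\eta}$ norm, decompose the drift via Khasminskii's discretisation and the auxiliary process $\hat Y^\varepsilon$, control the rough convolution difference with Lemmas~\ref{Glem1}--\ref{Glem2}, concatenate over short subintervals with a discrete Gronwall, and use the exponential ergodicity of Lemma~\ref{ergo} block by block for the averaging term (III). The one structural difference is the localisation step: you cut to an event $\Omega_M$ and let $M\to\infty$ at the end, whereas the paper asserts at the outset (via the concatenation argument of \cite{MG23}) that $|B|_\gamma$, $|B^2|_{2\gamma}$ and $\|X^\varepsilon,G_1(X^\varepsilon)\|_{\mathcal D_{\mathbf B}^{2\gamma,\eta}}$ are bounded by some $M$ $\mathbb P$-a.s.; your version is cleaner and sidesteps any question about whether that $M$ can be taken independent of $\omega$.

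One minor correction: the ``standard bound'' you invoke, $\mathbb E\big[\|\int_0^\cdot S_{\cdot-u}\Phi_u\,du\|_{\eta,0}^2\big]\le C_T\sup_u\mathbb E\|\Phi_u\|_{\mathcal H}^2$, cannot hold as written because the supremum over $s<t$ sits inside the expectation. What the paper actually uses (and what you need) is, via H\"older and Fubini, $\mathbb E\big[\|\int_0^\cdot S_{\cdot-u}\Phi_u\,du\|_{\eta,0}^2\big]\le C_T\int_0^T\mathbb E\|\Phi_u\|_{\mathcal H}^2\,du$, and for the $|R^\cdot|_{2\gamma,-2\gamma}$ part of the $\mathcal D_{\mathbf B}^{2\gamma,\eta}$ norm the paper further splits $|t-s|\le\delta$ versus $|t-s|>\delta$, the latter case requiring fourth moments and producing the exponent $\frac12+2\eta-2\gamma$ that forces $\eta>\gamma-\frac14$. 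You anticipated this (``separating the short near-diagonal part \ldots\ from the far part''), but be aware that it is needed already for terms (I), (II) and the $K_3$-part of (IV), not only for (III).
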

\begin{proof}
First, referring to \cite[Corollary 2.1]{MG23},
from the perspective of rough paths, we can show that $\|X^\varepsilon,G_1(X^\varepsilon)\|_{\mathcal{D}_{\mathbf B}^{2\gamma, \eta}}, \|\bar{X},G_1(\bar{X})\|_{\mathcal{D}_{\mathbf B}^{2\gamma, \eta}} \leq C(1+\|x\|_{\mathcal{H}}), \mathbb P$-a.s., when $T > 0$ is sufficiently small.
Therefore, based on a concatenation procedure (cf. \cite{GLS,HN20}),
whenever $\|X^\varepsilon,G_1(X^\varepsilon)\|_{\mathcal{D}_{\mathbf B,[0,t]}^{2\gamma, \eta}}$ is finite we can restart the slow equation $X^\varepsilon$ with initial condition $X_t^\varepsilon$,
and extend the bound further in time such that for any $T>0$ there exists a constant $M>0$ that depends on the initial value $x$, one has
$$\|X^\varepsilon,G_1(X^\varepsilon)\|_{\mathcal{D}_{\mathbf B}^{2\gamma, \eta}} <M, ~~\mathbb P \text{-a.s.}.$$
Similarly, for the averaged equation (\ref{Averagedeq}), we also have $\|\bar{X},G_1(\bar{X})\|_{\mathcal{D}_{\mathbf B}^{2\gamma, \eta}} <M$, $\mathbb P$-a.s. for any $T>0$.
This guarantees that all bounds with respect to $M$ in Lemma \ref{Xetaest}, Lemma \ref{Xdifflem} and Lemma \ref{Glem2} are satisfied.

Next, we will divide the proof into the following three steps:\\
\textbf{Step1:} From Eq. (\ref{eq1}) and Eq. (\ref{Averagedeq}), we have
\begin{equation*}
\left\{ \begin{aligned}
d(X_t^\varepsilon-\bar{X}_t)&=\big[ A (X_t^\varepsilon-\bar{X}_t)+ F_1(X_t^\varepsilon, Y_t^\varepsilon)-\bar{F}_1(\bar{X}_t) \big]dt
+\big[G_1(X_t^\varepsilon)-G_1(\bar{X}_t) \big] d \mathbf B_t, \\
X_0^\varepsilon-\bar{X}_0& = 0.
\end{aligned} \right.
\end{equation*}
Then it is easy to see that
\begin{align*}
X_t^\varepsilon-\bar{X}_t
=& \int_0^t S_{t-s} \big[ F_1(X_s^\varepsilon,Y_s^\varepsilon)-F_1(X_{s(\delta)}^\varepsilon,\hat{Y}_s^\varepsilon) \big] ds +\int_0^t S_{t-s} \big[F_1(X_{s(\delta)}^\varepsilon,\hat{Y}_s^\varepsilon)-\bar{F}_1(X_{s(\delta)}^\varepsilon) \big]ds \nonumber\\
&+\int_0^t S_{t-s} \big[\bar{F}_1(X_{s(\delta)}^\varepsilon)-\bar{F}_1(X_s^\varepsilon) \big]ds  \nonumber+\int_0^t S_{t-s} \big[\bar{F}_1(X_s^\varepsilon)-\bar{F}_1(\bar{X}_s) \big]ds  \nonumber\\
&+ \int_0^t S_{t-s} \big[G_1(X_s^\varepsilon)-G_1(\bar{X}_s) \big] d \mathbf B_s \\
=:& \mathcal M_t^\varepsilon+\int_0^t S_{t-s} \big[\bar{F}_1(X_s^\varepsilon)-\bar{F}_1(\bar{X}_s) \big]ds + \int_0^t S_{t-s} \big[G_1(X_s^\varepsilon)-G_1(\bar{X}_s) \big] d \mathbf B_s,
\end{align*}
where
\begin{align*}
\mathcal M_t^\varepsilon
:=& \int_0^t S_{t-s} \big[F_1(X_s^\varepsilon,Y_s^\varepsilon)-F_1(X_{s(\delta)}^\varepsilon,\hat{Y}_s^\varepsilon) \big]ds +\int_0^t S_{t-s} \big[F_1(X_{s(\delta)}^\varepsilon,\hat{Y}_s^\varepsilon)-\bar{F}_1(X_{s(\delta)}^\varepsilon) \big]ds\nonumber\\
&+\int_0^t S_{t-s} \big[\bar{F}_1(X_{s(\delta)}^\varepsilon)-\bar{F}_1(X_s^\varepsilon) \big]ds.
\end{align*}
Since the Gubinelli derivative of the deterministic integral is zero, we have
\begin{align}\label{J12}
&\big\|(X^\varepsilon-\bar{X}, G_1(X^\varepsilon)-G_1(\bar{X}))-(\mathcal M^\varepsilon,0) \big\|_{\mathcal D_{\mathbf B}^{2\gamma, \eta}} \nonumber\\
= &\Big\| \int_0^\cdot S_{\cdot-s} \big[\bar{F}_1(X_s^\varepsilon)-\bar{F}_1(\bar{X}_s) \big]ds+\int_0^\cdot S_{\cdot-s} \big[G_1(X_s^\varepsilon)-G_1(\bar{X}_s) \big] d \mathbf B_s, G_1(X^\varepsilon)-G_1(\bar{X})  \Big\|_{\mathcal D_{\mathbf B}^{2\gamma, \eta}} \nonumber\\
\leq & \Big\| \int_0^\cdot S_{\cdot-s} \big[\bar{F}_1(X_s^\varepsilon)-\bar{F}_1(\bar{X}_s) \big]ds,0 \Big\|_{\mathcal D_{\mathbf B}^{2\gamma, \eta}} \nonumber\\
&+\Big\|\int_0^\cdot S_{\cdot-s} \big[G_1(X_s^\varepsilon)-G_1(\bar{X}_s) \big] d \mathbf B_s, G_1(X^\varepsilon)-G_1(\bar{X})  \Big\|_{\mathcal D_{\mathbf B}^{2\gamma, \eta}} \nonumber\\
=:& J_1+J_2.
\end{align}
For $J_1$ we have
\begin{equation}\label{J1}
J_1
= \Big\| \int_0^\cdot S_{\cdot-s} \big[\bar{F}_1(X_s^\varepsilon)-\bar{F}_1(\bar{X}_s) \big]ds \Big\|_{\eta,0}
+\Big| R^{\int_0^\cdot S_{\cdot-s} [\bar{F}_1(X_s^\varepsilon)-\bar{F}_1(\bar{X}_s)]ds} \Big|_{2\gamma, -2\gamma}.
\end{equation}
For the first term of the right hand side of (\ref{J1}), by the Lipschitz continuity of $\bar{F}_1$, we have
\begin{align*}
&\Big\| \int_0^t S_{t-u}  \big[\bar{F}_1(X_u^\varepsilon)-\bar{F}_1(\bar{X}_u) \big]du-
S_{t-s} \int_0^s S_{s-u} \big[\bar{F}_1(X_u^\varepsilon)-\bar{F}_1(\bar{X}_u)\big] du  \Big\|_{\mathcal H} \\
= &\Big\| \int_s^t S_{t-u}  \big[\bar{F}_1(X_u^\varepsilon)-\bar{F}_1(\bar{X}_u) \big]du  \Big\|_{\mathcal H} \leq \int_s^t  \big\|S_{t-u}  \big[\bar{F}_1(X_u^\varepsilon)-\bar{F}_1(\bar{X}_u)\big]  \big\|_{\mathcal H} du \\
\leq & \int_s^t  \big\|\bar{F}_1(X_u^\varepsilon)-\bar{F}_1(\bar{X}_u) \big\|_{\mathcal H} du \leq C \|X^\varepsilon-\bar{X}\|_{\infty,0} |t-s|.
\end{align*}
Therefore, it is easy to deduce that
\begin{equation}\label{Feta}
\Big\| \int_0^\cdot S_{\cdot-s} \big[\bar{F}_1(X_s^\varepsilon)-\bar{F}_1(\bar{X}_s) \big]ds \Big\|_{\eta,0}
\leq C \|X^\varepsilon-\bar{X}\|_{\infty,0} T^{1-\eta}.
\end{equation}
For the second term of the right hand side of (\ref{J1}), by (\ref{remainder}) and the Lipschitz continuity of $\bar{F}_1$ we have
\begin{align*}
\Big\|R^{\int_0^\cdot S_{\cdot-u} [\bar{F}_1(X_u^\varepsilon)-\bar{F}_1(\bar{X}_u) ]du }_{t,s} \Big\|_{\mathcal{H}_{-2\gamma}} &=\Big\| \int_s^t S_{t-u}  \big[\bar{F}_1(X_u^\varepsilon)-\bar{F}_1(\bar{X}_u) \big]du \Big\|_{\mathcal{H}_{-2\gamma}} \\
& \leq  \int_s^t  \big\|S_{t-u}  \big[\bar{F}_1(X_u^\varepsilon)-\bar{F}_1(\bar{X}_u)\big]  \big\|_{\mathcal{H}_{-2\gamma}} du \\
& \leq  \int_s^t \|\bar{F}_1(X_u^\varepsilon)-\bar{F}_1(\bar{X}_u)\|_{\mathcal{H}_{-2\gamma}} du \\
& \leq  C\int_s^t \|\bar{F}_1(X_u^\varepsilon)-\bar{F}_1(\bar{X}_u)\|_{\mathcal{H}} du \\
& \leq  C \|X^\varepsilon-\bar{X}\|_{\infty,0} |t-s|,
\end{align*}
where for the second inequality we used the contractive property of the semigroup $(S_t)_{t\geq 0}$,
and for the third inequality we used the fact that $\mathcal H \subset \mathcal H_{-2\gamma}$ is continuous.
Thus we get that
\begin{equation}\label{Frem}
\Big| R^{\int_0^\cdot S_{\cdot-s} [\bar{F}_1(X_s^\varepsilon)-\bar{F}_1(\bar{X}_s) ]ds} \Big|_{2\gamma, -2\gamma}
\leq C \|X^\varepsilon-\bar{X}\|_{\infty,0} T^{1-2\gamma}.
\end{equation}

For $J_2$, by (\ref{IntG}) and Lemma \ref{Glem2} we have
\begin{align}\label{J2}
J_2&=\Big\|\int_0^\cdot S_{\cdot-s} \big[G_1(X_s^\varepsilon)-G_1(\bar{X}_s) \big] d \mathbf B_s, G_1(X^\varepsilon)-G_1(\bar{X}) \Big\|_{\mathcal D_{\mathbf B}^{2\gamma, \eta}}  \nonumber\\
& \lesssim  \|(G_1(X^\varepsilon)-G_1(\bar{X}))_0^\prime\|_{\mathcal H_{-2\gamma}^{d \times d}}
+\|G_1(X^\varepsilon)-G_1(\bar{X})\|_{\infty,0}  \nonumber\\
&~~~+T^{\gamma-\eta}(1+|B|_\gamma+|B^2|_{2\gamma})\|G_1(X^\varepsilon)-G_1(\bar{X}),(G_1(X^\varepsilon)-G_1(\bar{X}))^\prime\|_{\mathscr{D}_{S,\mathbf B}^{2\gamma,2\gamma,0}} \nonumber\\
& \leq  C\|X_0^\varepsilon-\bar{X}_0\|_{\mathcal{H}_{-2\gamma}}+C \|X_0^\varepsilon-\bar{X}_0\|_{\mathcal H}+C T^\eta \|X^\varepsilon-\bar{X}\|_{\eta,0} \nonumber\\
&~~~+T^{\gamma-\eta} (1+|B|_\gamma+|B^2|_{2\gamma}) (1+|B|_\gamma)^2 C_{M,T} \|X^\varepsilon-\bar{X},G_1(X^\varepsilon)-G_1(\bar{X})\|_{\mathcal D_{\mathbf B}^{2\gamma, \eta}} \nonumber\\
& \leq  T^{\gamma-\eta} C_{M,T}\|X^\varepsilon-\bar{X},G_1(X^\varepsilon)-G_1(\bar{X})\|_{\mathcal D_{\mathbf B}^{2\gamma, \eta}}
+C T^\eta \|X^\varepsilon-\bar{X}\|_{\eta,0} +C \|X_0^\varepsilon-\bar{X}_0\|_{\mathcal H},
\end{align}
where we used the fact that the embedding $\mathcal{H} \subset \mathcal H_{-2\gamma}$ is continuous in the last inequality.
Substituting (\ref{Feta})-(\ref{J2}) into (\ref{J12}), we have
\begin{align*}
&\|(X^\varepsilon-\bar{X}, G_1(X^\varepsilon)-G_1(\bar{X}))-(\mathcal M^\varepsilon,0)\|_{\mathcal D_{\mathbf B}^{2\gamma, \eta}} \nonumber\\
\leq&  C\|X^\varepsilon-\bar{X}\|_{\infty,0} T^{1-\eta}+ C\|X^\varepsilon-\bar{X}\|_{\infty,0} T^{1-2\gamma}+C \|X_0^\varepsilon-\bar{X}_0\|_{\mathcal H} \nonumber\\
&+T^{\gamma-\eta} C_{M,T} \|X^\varepsilon-\bar{X},G_1(X^\varepsilon)-G_1(\bar{X})\|_{\mathcal D_{\mathbf B}^{2\gamma, \eta}}
+C T^\eta  \|X^\varepsilon-\bar{X}\|_{\eta,0} \nonumber\\
\leq &  C (T^1 +T^\eta +T^{1+\eta-2\gamma})\|X^\varepsilon-\bar{X}\|_{\eta,0} +C_T \|X_0^\varepsilon-\bar{X}_0\|_{\mathcal H} \nonumber\\
&+T^{\gamma-\eta} C_{M,T} \|X^\varepsilon-\bar{X},G_1(X^\varepsilon)-G_1(\bar{X})\|_{\mathcal D_{\mathbf B}^{2\gamma, \eta}} \nonumber\\
\leq & C_{M,T} (T^1 +T^\eta +T^{1+\eta-2\gamma}+T^{\gamma-\eta}) \|X^\varepsilon-\bar{X},G_1(X^\varepsilon)-G_1(\bar{X})\|_{\mathcal D_{\mathbf B}^{2\gamma, \eta}}+C_T \|X_0^\varepsilon-\bar{X}_0\|_{\mathcal H},
\end{align*}
where in the second inequality, we used the estimate $\|X^\varepsilon-\bar{X}\|_{\infty,0} \leq T^\eta \|X^\varepsilon-\bar{X}\|_{\eta,0} +\|X_0^\varepsilon-\bar{X}_0\|_{\mathcal H}$.

Let $0<T<1$ be small enough such that $C_{M,T} T^\sigma <1/2$, where $\sigma=1 \wedge \eta \wedge (1+\eta-2\gamma) \wedge (\gamma-\eta)$, then we have
\begin{align*}
&\|X^\varepsilon-\bar{X}, G_1(X^\varepsilon)-G_1(\bar{X})\|_{\mathcal D_{\mathbf B}^{2\gamma, \eta}} \nonumber\\
\leq &\|\mathcal M^\varepsilon,0\|_{\mathcal D_{\mathbf B}^{2\gamma, \eta}}
+ C_{M,T} T^\sigma \|X^\varepsilon-\bar{X},G_1(X^\varepsilon)-G_1(\bar{X})\|_{\mathcal D_{\mathbf B}^{2\gamma, \eta}} +C_T \|X_0^\varepsilon-\bar{X}_0\|_{\mathcal H}  \nonumber\\
\leq &\|\mathcal M^\varepsilon,0\|_{\mathcal D_{\mathbf B}^{2\gamma, \eta}}
+ \frac 12 \|X^\varepsilon-\bar{X},G_1(X^\varepsilon)-G_1(\bar{X})\|_{\mathcal D_{\mathbf B}^{2\gamma, \eta}} +C_T \|X_0^\varepsilon-\bar{X}_0\|_{\mathcal H}.
\end{align*}
It is easy to obtain that
$$\|X^\varepsilon-\bar{X}, G_1(X^\varepsilon)-G_1(\bar{X})\|_{\mathcal D_{\mathbf B}^{2\gamma, \eta}} \leq 2 \|\mathcal M^\varepsilon,0\|_{\mathcal D_{\mathbf B}^{2\gamma, \eta}} +C_T \|X_0^\varepsilon-\bar{X}_0\|_{\mathcal H}.$$

For any $T>0$, we divide the time interval $[0,T]$ into $N \in \mathbb N$ identical subintervals such that $ {(\frac T N)}^\sigma C_{M,T} <1/2$,
then on each subinterval $[t_k,t_{k+1}]$ with $t_k=\frac k N T ,k=0, \cdots , N-1$, we have
\begin{equation}\label{Xk}
\|X^\varepsilon-\bar{X}, G_1(X^\varepsilon)-G_1(\bar{X})\|_{\mathcal D_{\mathbf B, [t_k,t_{k+1}]}^{2\gamma, \eta}} \leq 2 \|\mathcal M^\varepsilon,0\|_{\mathcal D_{\mathbf B}^{2\gamma, \eta}}+C_T \|X_k^\varepsilon-\bar{X}_k\|_{\mathcal H},
\end{equation}
where we write $X_k^\varepsilon=X_{t_k}^\varepsilon$ and $\bar{X}_k=\bar{X}_{t_k}$ for simplicity.

Next, we will estimate $\|X_k^\varepsilon-\bar{X}_k\|_{\mathcal H}$ on the subinterval $[t_{k-1},t_k]$ for $k=1, \cdots, N-1$.
Using (\ref{Xk}) one has
\begin{align}\label{xik}
\|X_k^\varepsilon-\bar{X}_k\|_{\mathcal H}
&\leq \|X_k^\varepsilon-\bar{X}_k-S_{t_k-t_{k-1}} (X^\varepsilon_{k-1}-\bar{X}_{k-1})\|_{\mathcal H}
+\|S_{t_k-t_{k-1}} (X^\varepsilon_{k-1}-\bar{X}_{k-1})\|_{\mathcal H}  \nonumber\\
&\leq  \|X^\varepsilon-\bar{X}\|_{\eta,0,[t_{k-1},t_k]} \Big(\frac T N \Big)^\eta+\|X^\varepsilon_{k-1}-\bar{X}_{k-1}\|_{\mathcal H}  \nonumber\\
&\leq  \|X^\varepsilon-\bar{X},G_1(X^\varepsilon)-G_1(\bar{X})\|_{\mathcal D_{\mathbf B, [t_{k-1},t_k]}^{2\gamma, \eta}} \Big(\frac T N \Big)^\sigma
+\|X^\varepsilon_{k-1}-\bar{X}_{k-1}\|_{\mathcal H}  \nonumber\\
&\leq   \big(2 \|\mathcal M^\varepsilon,0\|_{\mathcal D_{\mathbf B}^{2\gamma, \eta}}+C_T \|X^\varepsilon_{k-1}-\bar{X}_{k-1}\|_{\mathcal H} \big) \Big(\frac T N \Big)^\sigma +\|X^\varepsilon_{k-1}-\bar{X}_{k-1}\|_{\mathcal H}  \nonumber\\
&\leq \|\mathcal M^\varepsilon,0\|_{\mathcal D_{\mathbf B}^{2\gamma, \eta}}+ 2\|X^\varepsilon_{k-1}-\bar{X}_{k-1}\|_{\mathcal H},
\end{align}
where in the last inequality, we used $C_{M,T} (\frac T N)^\sigma <1/2$.
Therefore, for $k=1$, according to (\ref{xik}) and $X_0^\varepsilon=\bar{X}_0=x$ we obtain
\begin{equation*}
\|X^\varepsilon_1-\bar{X}_1\|_{\mathcal H}
\leq  \|\mathcal M^\varepsilon,0\|_{\mathcal D_{\mathbf B}^{2\gamma, \eta}}.
\end{equation*}
For $k=2$ we have
\begin{equation*}
\|X^\varepsilon_2-\bar{X}_2\|_{\mathcal H}
\leq  \|\mathcal M^\varepsilon,0\|_{\mathcal D_{\mathbf B}^{2\gamma, \eta}} +2\|X^\varepsilon_1-\bar{X}_1\|_{\mathcal H}
\leq  \|\mathcal M^\varepsilon,0\|_{\mathcal D_{\mathbf B}^{2\gamma, \eta}}+2\|\mathcal M^\varepsilon,0\|_{\mathcal D_{\mathbf B}^{2\gamma, \eta}}.
\end{equation*}
For $k=3$ we have
\begin{align*}
\|X^\varepsilon_3-\bar{X}_3\|_{\mathcal H}
&\leq  \|\mathcal M^\varepsilon,0\|_{\mathcal D_{\mathbf B}^{2\gamma, \eta}} +2\|X^\varepsilon_2-\bar{X}_2\|_{\mathcal H} \\
&\leq  \|\mathcal M^\varepsilon,0\|_{\mathcal D_{\mathbf B}^{2\gamma, \eta}}+2\big (\|\mathcal M^\varepsilon,0\|_{\mathcal D_{\mathbf B}^{2\gamma, \eta}}+2\|\mathcal M^\varepsilon,0\|_{\mathcal D_{\mathbf B}^{2\gamma, \eta}} \big) \\
&= (1+2+2^2)\|\mathcal M^\varepsilon,0\|_{\mathcal D_{\mathbf B}^{2\gamma, \eta}} .
\end{align*}
By recursively using (\ref{xik}) we conclude that
\begin{equation}\label{xiest}
\|X_k^\varepsilon-\bar{X}_k\|_{\mathcal H} \leq (1+2^1+2^2+\cdots+2^{k-1})\|\mathcal M^\varepsilon,0\|_{\mathcal D_{\mathbf B}^{2\gamma, \eta}}
=(2^k-1)\|\mathcal M^\varepsilon,0\|_{\mathcal D_{\mathbf B}^{2\gamma, \eta}}, ~~k=1,\cdots, N-1.
\end{equation}
Substituting (\ref{xiest}) into (\ref{Xk}) yields that
\begin{align} \label{Xkest}
&\|X^\varepsilon-\bar{X}, G_1(X^\varepsilon)-G_1(\bar{X})\|_{\mathcal D_{\mathbf B, [t_k,t_{k+1}]}^{2\gamma, \eta}} \leq 2 \|\mathcal M^\varepsilon,0\|_{\mathcal D_{\mathbf B}^{2\gamma, \eta}}+C_T (2^k-1)\|\mathcal M^\varepsilon,0\|_{\mathcal D_{\mathbf B}^{2\gamma, \eta}} \nonumber\\
\leq & C_T 2^N \|\mathcal M^\varepsilon,0\|_{\mathcal D_{\mathbf B}^{2\gamma, \eta}}.
\end{align}

Consequently, using (\ref{Xkest}) one has
\begin{align}\label{Xdifeta}
\|X^\varepsilon-\bar{X}\|_{\eta, 0} &\leq \sum_{k=0}^{N-1} \|X^\varepsilon-\bar{X}\|_{\eta, 0, [t_k, t_{k+1}]} \leq  \sum_{k=0}^{N-1}\|X^\varepsilon-\bar{X}, G_1(X^\varepsilon)-G_1(\bar{X})\|_{\mathcal D_{\mathbf B, [t_k,t_{k+1}]}^{2\gamma, \eta}} \nonumber\\
&\leq  C_T N 2^N \|\mathcal M^\varepsilon,0\|_{\mathcal D_{\mathbf B}^{2\gamma, \eta}},
\end{align}
where the first inequality is due to: For any $0\leq s <t \leq T$, if $s \in [t_j, t_{j+1}], t\in [t_k, t_{k+1}]$ and $j<k, j,k=0,1, \cdots, N-1$, then we have
\begin{align*}
&\|X_t^\varepsilon-\bar{X}_t-S_{t-s}(X_s^\varepsilon-\bar{X}_s)\|_{\mathcal H}   \\
=& \|X_t^\varepsilon-\bar{X}_t-S_{t-t_k}(X_k^\varepsilon-\bar{X}_k)+S_{t-t_k}(X_k^\varepsilon-\bar{X}_k)-S_{t-t_{k-1}}(X_{k-1}^\varepsilon-\bar{X}_{k-1})+\cdots \\
&+ S_{t-t_{j+2}}(X_{j+2}^\varepsilon-\bar{X}_{j+2})-S_{t-t_{j+1}}(X_{j+1}^\varepsilon-\bar{X}_{j+1})+S_{t-t_{j+1}}(X_{j+1}^\varepsilon-\bar{X}_{j+1})       -S_{t-s}(X_s^\varepsilon-\bar{X}_s)\|_{\mathcal H} \\
\leq& \|X_t^\varepsilon-\bar{X}_t-S_{t-t_k}(X_k^\varepsilon-\bar{X}_k)\|_{\mathcal H}
+\big\|S_{t-t_k} \big[(X_k^\varepsilon-\bar{X}_k)-S_{t_k-t_{k-1}}(X_{k-1}^\varepsilon-\bar{X}_{k-1}) \big] \big\|_{\mathcal H} \\
&+ \cdots + \big \|S_{t-t_{j+2}} \big[ (X_{j+2}^\varepsilon-\bar{X}_{j+2})-S_{t_{j+2}-t_{j+1}}(X_{j+1}^\varepsilon-\bar{X}_{j+1}) \big] \big\|_{\mathcal H}  \\
&+\big\|S_{t-t_{j+1}} \big[ (X_{j+1}^\varepsilon-\bar{X}_{j+1})-S_{t_{j+1}-s}(X_s^\varepsilon-\bar{X}_s) \big] \big\|_{\mathcal H} \\
\leq&  \|X_t^\varepsilon-\bar{X}_t-S_{t-t_k}(X_k^\varepsilon-\bar{X}_k)\|_{\mathcal H}
+\|X_k^\varepsilon-\bar{X}_k-S_{t_k-t_{k-1}}(X_{k-1}^\varepsilon-\bar{X}_{k-1}) \|_{\mathcal H} \\
&+ \cdots + \|X_{j+2}^\varepsilon-\bar{X}_{j+2}-S_{t_{j+2}-t_{j+1}}(X_{j+1}^\varepsilon-\bar{X}_{j+1}) \|_{\mathcal H}  \\
&+\| X_{j+1}^\varepsilon-\bar{X}_{j+1}-S_{t_{j+1}-s}(X_s^\varepsilon-\bar{X}_s) \|_{\mathcal H},
\end{align*}
where we used the contractive property of semigroup $(S_t)_{t\geq 0}$ for the last inequality.\\
\textbf{Step2:}
We estimate $\|\mathcal M^\varepsilon,0\|_{\mathcal D_{\mathbf B}^{2\gamma, \eta}}$. Set
\begin{align}\label{z}
\mathcal M_t^\varepsilon&= \int_0^t S_{t-s} \big[F_1(X_s^\varepsilon,Y_s^\varepsilon)
	-F_1(X_{s(\delta)}^\varepsilon,\hat{Y}_s^\varepsilon) \big]ds+\int_0^t S_{t-s} \big[F_1(X_{s(\delta)}^\varepsilon,\hat{Y}_s^\varepsilon)-\bar{F}_1(X_{s(\delta)}^\varepsilon) \big]ds\nonumber\\
    &~~~+\int_0^t S_{t-s} \big[\bar{F}_1(X_{s(\delta)}^\varepsilon)
	-\bar{F}_1(X_s^\varepsilon) \big]ds\nonumber\\
	&=:\sum_{i=1}^{3} K_i(t).
\end{align}
Below we will estimate the terms $K_i(t)$, $i=1,2,3$, in (\ref{z}), respectively.

It is straightforward that
\begin{equation*}
\mathbb E\|K_1(\cdot),0\|_{\mathcal D_{\mathbf B}^{2 \gamma, \eta}}^2
\leq  2\mathbb E\|K_1(\cdot)\|_{\eta,0}^2+2\mathbb E |R^{K_1(\cdot)}|_{2\gamma,-2\gamma}^2.
\end{equation*}
For $\mathbb E\|K_1(\cdot)\|_{\eta,0}^2$, using H\"{o}lder's inequality, Lemma \ref{Xdifflem} and (\ref{AimAuxdifference}), we have
\begin{align}\label{K_1eta}
\mathbb E\|K_1(\cdot)\|_{\eta,0}^2&=\mathbb E \Big[ \sup_{s<t} \frac {\|K_1(t)-S_{t-s}K_1(s)\|_{\mathcal H}^2} { |t-s|^{2\eta} } \Big]\nonumber\\
&=\mathbb E \Big[ \sup_{s<t} |t-s|^{-2\eta}
 \Big\| \int_s^t S_{t-r} \big[F_1(X_r^\varepsilon,Y_r^\varepsilon)-F_1(X_{r(\delta)}^\varepsilon,\hat{Y}_r^\varepsilon) \big]dr \Big\|_{\mathcal H}^2 \Big] \nonumber\\
&\leq \mathbb E \Big[ \sup_{s<t} |t-s|^{-2\eta}  \int_s^t 1^2 dr \cdot
\int_s^t \big\|  S_{t-r} \big[F_1(X_r^\varepsilon,Y_r^\varepsilon)-F_1(X_{r(\delta)}^\varepsilon,\hat{Y}_r^\varepsilon) \big] \big\|_{\mathcal H}^2 dr\Big] \nonumber\\
&\leq T^{1-2\eta} \mathbb E \Big[ \sup_{s<t} \int_s^t \big\| F_1(X_r^\varepsilon,Y_r^\varepsilon)-F_1(X_{r(\delta)}^\varepsilon,\hat{Y}_r^\varepsilon)  \big\|_{\mathcal H}^2 dr\Big] \nonumber\\
&\leq  T^{1-2\eta} \mathbb E \int_0^T \| F_1(X_r^\varepsilon,Y_r^\varepsilon)-F_1(X_{r(\delta)}^\varepsilon,\hat{Y}_r^\varepsilon) \|_{\mathcal H}^2 dr \nonumber\\
&\leq C T^{1-2\eta}  \int_0^T  \mathbb E \big[\| X_r^\varepsilon
	-X_{r(\delta)}^\varepsilon \|_{\mathcal H}^2+\|Y_r^\varepsilon-\hat{Y}_r^\varepsilon\|_{\mathcal H}^2 \big] dr \nonumber\\
&\leq C_{M,T} (1+\|x\|_{\mathcal{H}_\eta}^2+\|y\|_{\mathcal{H}}^2)\delta^{2\eta}.
\end{align}
For $\mathbb E |R^{K_1(\cdot)}|_{2\gamma,-2\gamma}^2$, making use of (\ref{remainder}) we have
\begin{align*}
\mathbb E|R^{K_1(\cdot)}|_{2\gamma,-2\gamma}^2&=\mathbb E \Big[ \sup_{s<t} \frac {\|K_1(t)-S_{t-s}K_1(s)\|_{\mathcal{H}_{-2\gamma}}^2} { |t-s|^{4\gamma} } \Big]\nonumber\\
&\leq \mathbb E \Big[ \sup_{s<t} |t-s|^{-4\gamma} \Big\| \int_s^t S_{t-r}\big[ F_1(X_r^\varepsilon,Y_r^\varepsilon)-F_1(X_{r(\delta)}^\varepsilon,\hat{Y}_r^\varepsilon) \big]dr \Big\|_{\mathcal{H}_{-2\gamma}}^2  1_{ \{|t-s|\leq \delta\} }\Big] \nonumber\\
&~~~+\mathbb E \Big[ \sup_{s<t} |t-s|^{-4\gamma} \Big\| \int_s^t S_{t-r}\big[F_1(X_r^\varepsilon,Y_r^\varepsilon)-F_1(X_{r(\delta)}^\varepsilon,\hat{Y}_r^\varepsilon) \big]dr \Big\|_{\mathcal{H}_{-2\gamma}}^2  1_{ \{|t-s|> \delta\} }\Big] \nonumber\\
&=: K_{11}+K_{12}.
\end{align*}
For $K_{11}$, by H\"{o}lder's inequality and (\ref{F1Bound}), we obtain
\begin{align}\label{K11}
K_{11}
&\leq \mathbb E \Big[ \sup_{s<t} |t-s|^{-4\gamma}  \int_s^t 1^2 dr  \cdot \int_s^t \big\|S_{t-r}\big[F_1(X_r^\varepsilon,Y_r^\varepsilon)-F_1(X_{r(\delta)}^\varepsilon,\hat{Y}_r^\varepsilon) \big] \big\|_{\mathcal{H}_{-2\gamma}}^2dr   1_{ \{|t-s|\leq \delta\} }\Big] \nonumber\\
&\leq C\mathbb E \Big[ \sup_{s<t} |t-s|^{1-4\gamma} \int_s^t (\|F_1(X_r^\varepsilon,Y_r^\varepsilon)\|_{\mathcal{H}_{-2\gamma}}^2
	+\|F_1(X_{r(\delta)}^\varepsilon,\hat{Y}_r^\varepsilon) \|_{\mathcal{H}_{-2\gamma}}^2 )dr   1_{ \{|t-s|\leq \delta\} }\Big] \nonumber\\
&\leq C\sup_{s<t} \big\{ |t-s|^{2(1-2\gamma)} 1_{ \{|t-s|\leq \delta\} } \big\}  \leq C \delta^{2(1-2\gamma)}.
\end{align}
For $K_{12}$, by H\"{o}lder's inequality, (\ref{F1Lip}), (\ref{AimAuxdifference}) and Lemma \ref{Xdifflem}, we get
\begin{align}\label{K12}
K_{12}
& \leq \mathbb E \Big[ \sup_{s<t} |t-s|^{-4\gamma}  \int_s^t 1^2 dr  \cdot \int_s^t \big\|S_{t-r}\big[F_1(X_r^\varepsilon,Y_r^\varepsilon)-F_1(X_{r(\delta)}^\varepsilon,\hat{Y}_r^\varepsilon) \big] \big\|_{\mathcal{H}_{-2\gamma}}^2dr   1_{ \{|t-s|> \delta\} }\Big] \nonumber\\
& \leq  T^{1-2\gamma}\mathbb E \Big[ \sup_{s<t} |t-s|^{-2\gamma}  \int_s^t \|F_1(X_r^\varepsilon,Y_r^\varepsilon)-F_1(X_{r(\delta)}^\varepsilon,\hat{Y}_r^\varepsilon) \|_{\mathcal{H}_{-2\gamma}}^2dr   1_{ \{|t-s|> \delta\} }\Big] \nonumber\\
&\leq C_T \mathbb E \Big[ \sup_{s<t} |t-s|^{-2\gamma}  \Big(\int_s^t 1^2 dr \Big)^{\frac 12}
     \Big(\int_s^t \|F_1(X_r^\varepsilon,Y_r^\varepsilon)-F_1(X_{r(\delta)}^\varepsilon,\hat{Y}_r^\varepsilon) \|_{\mathcal{H}_{-2\gamma}}^4 dr \Big)^{\frac 12}   1_{ \{|t-s|> \delta\} }\Big] \nonumber\\
&\leq C_T \mathbb E \Big[ \sup_{s<t} |t-s|^{\frac 12-2\gamma}
     \Big(\int_s^t \|F_1(X_r^\varepsilon,Y_r^\varepsilon)-F_1(X_{r(\delta)}^\varepsilon,\hat{Y}_r^\varepsilon) \|_{\mathcal{H}_{-2\gamma}}^4 dr \Big)^{\frac 12}   1_{ \{|t-s|> \delta\} }\Big] \nonumber\\
&\leq C_T \delta^{\frac 12-2\gamma}
  \mathbb E \Big[ \sup_{s<t}
     \Big(\int_s^t \|F_1(X_r^\varepsilon,Y_r^\varepsilon)-F_1(X_{r(\delta)}^\varepsilon,\hat{Y}_r^\varepsilon) \|_{\mathcal{H}_{-2\gamma}}^4 dr \Big)^{\frac 12}   1_{ \{|t-s|> \delta\} }\Big] \nonumber\\
&\leq C_T \delta^{\frac 12-2\gamma}
   \Big\{ \mathbb E
     \int_0^T \|F_1(X_r^\varepsilon,Y_r^\varepsilon)-F_1(X_{r(\delta)}^\varepsilon,\hat{Y}_r^\varepsilon) \|_{\mathcal{H}}^4 dr   \Big\}^{\frac 12}\nonumber\\
&\leq C_T \delta^{\frac 12-2\gamma}
   \Big\{
     \int_0^T \mathbb E\big[\|X_r^\varepsilon-X_{r(\delta)}^\varepsilon\|_{\mathcal H}^4+\|Y_r^\varepsilon-\hat{Y}_r^\varepsilon\|_{\mathcal H}^4 \big] dr    \Big\}^{\frac 12}\nonumber\\
&\leq C_{M,T} (1+\|x\|_{\mathcal{H}_\eta}^2+\|y\|_{\mathcal{H}}^2) \delta^{\frac 12+2\eta-2\gamma},
\end{align}
where $\frac 12+2\eta-2\gamma>0$ because $\eta >\gamma-1/4$.
Combining estimates (\ref{K_1eta})-(\ref{K12}), we have
\begin{equation}\label{K_1est}
\mathbb E\|K_1(\cdot),0\|_{\mathcal D_{\mathbf B}^{2 \gamma, \eta}}^2
\leq  C_{M,T} (1+\|x\|_{\mathcal{H}_\eta}^2+\|y\|_{\mathcal{H}}^2)(\delta^{2\eta}+\delta^{2(1-2\gamma)}+\delta^{\frac 12+2\eta-2\gamma}).
\end{equation}

From the definition of $\bar{F}_1$, it is easy to obtain that $\|\bar{F}_1\|_\infty \leq \|F_1\|_\infty$.
Thus, similar to the estimation of $K_1(t)$, using Lemma \ref{Xdifflem} we get
\begin{equation}\label{K_3est}
\mathbb E\|K_3(\cdot),0\|_{\mathcal D_{\mathbf B}^{2 \gamma, \eta}}^2
\leq  C_{M,T} (1+\|x\|_{\mathcal{H}_\eta}^2+\|y\|_{\mathcal{H}}^2) (\delta^{2\eta}+\delta^{2(1-2\gamma)}+\delta^{\frac 12+2\eta-2\gamma}).
\end{equation}
\textbf{Step3:} Next, let's estimate $K_2(t)$.

For $\mathbb E\|K_2(\cdot)\|_{\eta,0}^2$, we have
\begin{align}\label{K2eta}
\mathbb E\|K_2(\cdot)\|_{\eta,0}^2&=\mathbb E \Big[ \sup_{s<t} |t-s|^{-2\eta}
 \Big\| \int_s^t S_{t-r}\big[F_1(X_{r(\delta)}^\varepsilon,\hat{Y}_r^\varepsilon)-\bar{F}_1(X_{r(\delta)}^\varepsilon) \big]dr \Big\|_{\mathcal H}^2 \Big] \nonumber\\
&\leq 2\mathbb E \Big[ \sup_{s<t} |t-s|^{-2\eta}
 \Big\| \int_s^t \big( S_{t-r}-S_{t-r(\delta)} \big) \big[F_1(X_{r(\delta)}^\varepsilon,\hat{Y}_r^\varepsilon)-\bar{F}_1(X_{r(\delta)}^\varepsilon) \big]dr \Big\|_{\mathcal H}^2 \Big] \nonumber\\
&~~~+ 2\mathbb E \Big[ \sup_{s<t} |t-s|^{-2\eta}
 \Big\| \int_s^t  S_{t-r(\delta)}  \big[F_1(X_{r(\delta)}^\varepsilon,\hat{Y}_r^\varepsilon)-\bar{F}_1(X_{r(\delta)}^\varepsilon) \big]dr \Big\|_{\mathcal H}^2 \Big] \nonumber\\
&=: K_{21}+K_{22} .
\end{align}
For $K_{21}$, by H\"{o}lder's inequality, (\ref{semi}), (\ref{F1Bound}) and $\|\bar{F}_1\|_{\infty} \leq \|F_1\|_\infty $, we obtain
\begin{align}\label{K2eta1}
K_{21}&\leq 4\mathbb E \Big[ \sup_{s<t} |t-s|^{-2\eta}
  \int_s^t \| S_{t-r}-S_{t-r(\delta)} \|_{\mathcal L(\mathcal H;\mathcal H)}^2 dr
  \int_s^t \big(\|F_1(X_{r(\delta)}^\varepsilon,\hat{Y}_r^\varepsilon)\|_{\mathcal H}^2+\|\bar{F}_1(X_{r(\delta)}^\varepsilon)\|_{\mathcal H}^2 \big)dr \Big] \nonumber\\
&\leq C \mathbb E \Big[ \sup_{s<t} |t-s|^{1-2\eta}
  \int_s^t \| S_{t-r}-S_{t-r} \cdot S_{r-r(\delta)} \|_{\mathcal L(\mathcal H;\mathcal H)}^2 dr  \Big] \nonumber\\
&\leq C \mathbb E \Big[ \sup_{s<t} |t-s|^{1-2\eta}
  \int_s^t \| S_{t-r} \|_{\mathcal L(\mathcal H_{-\sigma};\mathcal H)}^2  \| S_{r-r(\delta)}-Id \|_{\mathcal L(\mathcal H;\mathcal H_{-\sigma})}^2 dr  \Big] \nonumber\\
&\leq C \delta^{2\sigma} \mathbb E \Big[ \sup_{s<t} |t-s|^{1-2\eta}
  \int_s^t |t-r|^{-2\sigma}  dr  \Big] \nonumber\\
&\leq C \delta^{2\sigma} \cdot \sup_{s<t} |t-s|^{2(1-\eta-\sigma)} \leq C T^{2(1-\eta-\sigma)} \delta^{2\sigma},
\end{align}
where $0<\sigma< \min{\{ \frac 12 , 1-\eta \} }$.
For $K_{22}$, we note that
\begin{align*}
K_{22}&\leq 2\mathbb E \Big[ \sup_{s<t} |t-s|^{-2\eta}
 \Big\| \int_s^t  S_{t-r(\delta)}  \big[F_1(X_{r(\delta)}^\varepsilon,\hat{Y}_r^\varepsilon)-\bar{F}_1(X_{r(\delta)}^\varepsilon) \big]dr \Big\|_{\mathcal H}^2 1_{\{|t-s|\leq \delta \}}\Big] \nonumber\\
&~~~+ 2\mathbb E \Big[ \sup_{s<t} |t-s|^{-2\eta}
 \Big\| \int_s^t  S_{t-r(\delta)}  \big[F_1(X_{r(\delta)}^\varepsilon,\hat{Y}_r^\varepsilon)-\bar{F}_1(X_{r(\delta)}^\varepsilon) \big]dr \Big\|_{\mathcal H}^2 1_{ \{|t-s|>\delta \}} \Big] \nonumber\\
&\leq 2\mathbb E \Big[ \sup_{s<t} |t-s|^{-2\eta}
 \Big\| \int_s^t  S_{t-r(\delta)}  \big[F_1(X_{r(\delta)}^\varepsilon,\hat{Y}_r^\varepsilon)-\bar{F}_1(X_{r(\delta)}^\varepsilon) \big]dr \Big\|_{\mathcal H}^2 1_{\{|t-s|\leq \delta \}}\Big] \nonumber\\
&~~~+ 6\mathbb E \Big[ \sup_{s<t} |t-s|^{-2\eta}
 \Big\| \int_s^{([s/\delta]+1)\delta}  S_{t-r(\delta)}  \big[F_1(X_{r(\delta)}^\varepsilon,\hat{Y}_r^\varepsilon)-\bar{F}_1(X_{r(\delta)}^\varepsilon) \big]dr \Big\|_{\mathcal H}^2 1_{ \{|t-s|>\delta \}} \Big] \nonumber\\
&~~~+ 6\mathbb E \Big[ \sup_{s<t} |t-s|^{-2\eta}
 \Big\| \int_{t(\delta)}^t  S_{t-r(\delta)}  \big[F_1(X_{r(\delta)}^\varepsilon,\hat{Y}_r^\varepsilon)-\bar{F}_1(X_{r(\delta)}^\varepsilon) \big]dr \Big\|_{\mathcal H}^2 1_{ \{|t-s|>\delta \}} \Big] \nonumber\\
&~~~+ 6\mathbb E \Big[ \sup_{s<t} |t-s|^{-2\eta}
 \Big\| \sum_{k=[s/\delta]+1}^{[t/\delta]-1} \int_{k\delta}^{(k+1)\delta} S_{t-k\delta} \big[F_1(X_{k\delta}^\varepsilon,\hat{Y}_r^\varepsilon)-\bar{F}_1(X_{k\delta}^\varepsilon) \big]dr \Big\|_{\mathcal H}^2 1_{ \{|t-s|>\delta \}} \Big] \nonumber\\
&=:L_1+L_2+L_3+L_4.
\end{align*}
Firstly, using H\"{o}lder's inequality, (\ref{F1Bound}) and $\|\bar{F}_1\|_{\infty} \leq \|F_1\|_\infty $, the term $L_1$ is controlled by
\begin{align}\label{L1}
L_1 &\leq  4\mathbb E \Big[ \sup_{s<t} |t-s|^{-2\eta}
  \int_s^t \|S_{t-r(\delta)} \|_{\mathcal L(\mathcal H;\mathcal H)}^2 dr  \int_s^t   \big(\| F_1(X_{r(\delta)}^\varepsilon,\hat{Y}_r^\varepsilon)\|_{\mathcal H}^2+\|\bar{F}_1(X_{r(\delta)}^\varepsilon) \|_{\mathcal H}^2 \big) dr 1_{\{|t-s|\leq \delta \}}\Big] \nonumber\\
&\leq  C \sup_{s<t} \big\{ |t-s|^{2(1-\eta)} 1_{\{|t-s|\leq \delta \}} \big\} \leq  C \delta^{2(1-\eta)}.
\end{align}
Secondly, using the H\"{o}lder's inequality and the uniform boundedness of $F_1$ and $\bar{F}_1$, we have
\begin{align}\label{L2}
L_2 & \leq  C\delta^{-2\eta} \mathbb E \Big[ \sup_{s<t}
  \int_s^{([s/\delta]+1)\delta} \|S_{t-r(\delta)} \|_{\mathcal L(\mathcal H;\mathcal H)}^2 dr \nonumber\\
  &~~~~~~~~~~~~~~~~~\times \int_s^{([s/\delta]+1)\delta}  \big( \| F_1(X_{r(\delta)}^\varepsilon,\hat{Y}_r^\varepsilon)\|_{\mathcal H}^2+\|\bar{F}_1(X_{r(\delta)}^\varepsilon) \|_{\mathcal H}^2 \big) dr \Big] \leq  C \delta^{2(1-\eta)}.
\end{align}
For $L_3$, similar to the estimation of $L_2$, we have
\begin{equation}\label{L3}
L_3 \leq C \delta^{2(1-\eta)}.
\end{equation}
Finally, we estimate the term $L_4$.
\begin{align}\label{Psi}
L_4 &\leq  6\delta^{-2\eta}\mathbb E\Big[\sup_{s<t} \Big\| \sum_{k=[s/\delta]+1}^{[t/\delta]-1} \int_{k\delta}^{(k+1)\delta} S_{t-k\delta} \big[F_1(X_{k\delta}^\varepsilon,\hat{Y}_r^\varepsilon)-\bar{F}_1(X_{k\delta}^\varepsilon) \big]dr \Big\|_{\mathcal H}^2 1_{ \{|t-s|>\delta \}} \Big] \nonumber\\
&\leq  6\delta^{-2\eta}\mathbb E\Big[\sup_{s<t} \Big(\Big[\frac t\delta \Big]-\Big[\frac s\delta \Big]-1 \Big) \nonumber\\
    &~~~~~~~~~~~~~~\times \sum_{k=[s/\delta]+1}^{[t/\delta]-1}\Big\|  \int_{k\delta}^{(k+1)\delta} S_{t-k\delta} \big[F_1(X_{k\delta}^\varepsilon,\hat{Y}_r^\varepsilon)-\bar{F}_1(X_{k\delta}^\varepsilon) \big]dr \Big\|_{\mathcal H}^2 1_{ \{|t-s|>\delta \}} \Big] \nonumber\\
&\leq  C_T\delta^{-1-2\eta} \mathbb E \Big[\sum_{k=0}^{[T/\delta]-1}\Big\|  \int_{k\delta}^{(k+1)\delta} S_{t-k\delta} \big [F_1(X_{k\delta}^\varepsilon,\hat{Y}_r^\varepsilon)-\bar{F}_1(X_{k\delta}^\varepsilon) \big]dr \Big\|_{\mathcal H}^2 1_{ \{|t-s|>\delta \}} \Big] \nonumber\\
&\leq \frac {C_T}{\delta^{2(1+\eta)}} \max_{0\leq k\leq[T/\delta]-1}\mathbb E\Big\| S_{t-k\delta} \int_{k\delta}^{(k+1)\delta}  \big[F_1(X_{k\delta}^\varepsilon,\hat{Y}_r^\varepsilon)-\bar{F}_1(X_{k\delta}^\varepsilon) \big]dr \Big\|_{\mathcal H}^2  \nonumber\\
&\leq \frac {C_T}{\delta^{2(1+\eta)}} \max_{0\leq k\leq[T/\delta]-1}\mathbb E\Big\| \int_{k\delta}^{(k+1)\delta} \big[F_1(X_{k\delta}^\varepsilon,\hat{Y}_r^\varepsilon)-\bar{F}_1(X_{k\delta}^\varepsilon) \big]dr \Big\|_{\mathcal H}^2 \nonumber\\
&\leq \frac {C_T\varepsilon^2}{\delta^{2(1+\eta)}} \max_{0\leq k\leq[T/\delta]-1}
	\mathbb E\Big\|\int_0^{\frac \delta\varepsilon} \big[F_1(X_{k\delta}^\varepsilon,\hat{Y}_{r\varepsilon+k\delta}^\varepsilon)-\bar{F}_1(X_{k\delta}^\varepsilon) \big]dr \Big\|_{\mathcal H}^2  \nonumber\\
&\leq \frac {C_T\varepsilon^2}{\delta^{2(1+\eta)}} \max_{0\leq k\leq[T/\delta]-1}
\int_0^{\frac \delta\varepsilon}\int_r^{\frac \delta\varepsilon}\Psi_k(s,r)dsdr,
\end{align}
where for any $0\leq r\leq s\leq \frac \delta\varepsilon$,
\begin{equation}\label{psi}
\Psi_k(s,r):=\mathbb E\big[\langle F_1(X_{k\delta}^\varepsilon,\hat{Y}_{s\varepsilon+k\delta}^\varepsilon)-\bar{F}_1(X_{k\delta}^\varepsilon),
F_1(X_{k\delta}^\varepsilon,\hat{Y}_{r\varepsilon+k\delta}^\varepsilon)-\bar{F}_1(X_{k\delta}^\varepsilon)\rangle_{\mathcal H}\big].
\end{equation}

Now we estimate $\Psi_k(s,r)$.
For any $s>0$ and $\mathscr F_s$-measurable $\mathcal H$-valued random variables $\xi$ and $Y$, we consider the following equation
$$
\left\{ \begin{aligned}
	dY_t&=\frac 1\varepsilon \big[A Y_t+F_2(\xi,Y_t)\big]dt+\frac 1{\sqrt{\varepsilon}}G_2(\xi,Y_t)dW_t, ~~t\geq s,\\
	Y_s&=Y,
\end{aligned} \right.
$$
which has a unique solution $\{\widetilde{Y}^{\varepsilon,s,\xi,Y}_t\}_{t\geq0}$, since $F_2$ and $G_2$ are globally Lipschitz continuous by conditions (\ref{F2Lip}) and (\ref{Glip}).
Looking back at the construction of the auxiliary process (\ref{Auxeq}), for each $k\in\mathbb N$ and $t\in[k\delta,(k+1)\delta]$, we have $\mathbb P$-a.s.,
\begin{center}
	$\hat{Y}^\varepsilon_t=\widetilde{Y}^{\varepsilon,k\delta,X^\varepsilon_{k\delta},\hat{Y}^\varepsilon_{k\delta}}_t$.
\end{center}
Therefore, $\Psi_k(s,r)$ can be rewritten as
\begin{align*}
	\Psi_k(s,r)
	=&\mathbb E\Big[ \big\langle F_1(X^\varepsilon_{k\delta},\widetilde{Y}^{\varepsilon,k\delta,X^\varepsilon_{k\delta},
		\hat{Y}^\varepsilon_{k\delta}}_{s\varepsilon+k\delta})-\bar{F}_1(X^\varepsilon_{k\delta}),F_1(X^\varepsilon_{k\delta},\widetilde{Y}^{\varepsilon,k\delta,X^\varepsilon_{k\delta},
		\hat{Y}^\varepsilon_{k\delta}}_{r\varepsilon+k\delta})-
	\bar{F}_1(X^\varepsilon_{k\delta}) \big\rangle_{\mathcal H}\Big] \\
	=&\int_{\Omega}\mathbb E\Big[ \big\langle F_1(X^\varepsilon_{k\delta},\widetilde{Y}^{\varepsilon,k\delta,X^\varepsilon_{k\delta},
		\hat{Y}^\varepsilon_{k\delta}}_{s\varepsilon+k\delta})-\bar{F}_1(X^\varepsilon_{k\delta}), \\
&~~~~~~~~~~F_1(X^\varepsilon_{k\delta},\widetilde{Y}^{\varepsilon,k\delta,X^\varepsilon_{k\delta},
		\hat{Y}^\varepsilon_{k\delta}}_{r\varepsilon+k\delta})-
	\bar{F}_1(X^\varepsilon_{k\delta}) \big \rangle_{\mathcal H}\Big|\mathscr F_{k\delta}\Big](\omega)\mathbb P(d\omega)\\
	=&\int_{\Omega}\mathbb E\Big[ \big\langle F_1(X^\varepsilon_{k\delta}(\omega),\widetilde{Y}^{\varepsilon,k\delta,X^\varepsilon_{k\delta}(\omega),
		\hat{Y}^\varepsilon_{k\delta}(\omega)}_{s\varepsilon+k\delta})-
	\bar{F}_1(X^\varepsilon_{k\delta}(\omega)),\\
	&~~~~~~~~~~ F_1(X^\varepsilon_{k\delta}(\omega),\widetilde{Y}^{\varepsilon,k\delta,X^\varepsilon_{k\delta}(\omega),
		\hat{Y}^\varepsilon_{k\delta}(\omega)}_{r\varepsilon+k\delta})-
	\bar{F}_1(X^\varepsilon_{k\delta}(\omega)) \big\rangle_{\mathcal H}\Big]\mathbb P(d\omega).
\end{align*}
Here, going from second to the third equation, we used the fact that $X^\varepsilon_{k\delta}$ and $\hat{Y}^\varepsilon_{k\delta}$ are $\mathscr F_{k\delta}$-measurable, and for any fixed $(x,y)\in \mathcal H\times \mathcal H$, $\{\widetilde{Y}^{\varepsilon,k\delta,x,y}_{s\varepsilon+k\delta}\}_{s\geq0}$ is independent of $\mathscr F_{k\delta}$.

According to the definition of process $\{\widetilde{Y}^{\varepsilon,k\delta,
	x,y}_t\}_{t\geq0}$, for each $k\in\mathbb N$, using a shift transformation, we have $\mathbb P$-a.s.,
\begin{align}\label{Step3equ}
&\widetilde{Y}^{\varepsilon,k\delta,x,y}_{s\varepsilon+k\delta}\nonumber\\
=&y+\frac 1\varepsilon \int_{k\delta}^{s\varepsilon+k\delta} A \widetilde{Y}^{\varepsilon,k\delta,x,y}_r dr+\frac 1\varepsilon\int_{k\delta}^{s\varepsilon+k\delta}F_2(x,\widetilde{Y}^{\varepsilon,k\delta,x,y}_r)dr+
	\frac 1{\sqrt{\varepsilon}}\int_{k\delta}^{s\varepsilon+k\delta}G_2(x,\widetilde{Y}^{\varepsilon,k\delta,x,y}_r)dW_r\nonumber\\
=&y+\frac 1\varepsilon\int_0^{s\varepsilon}A \widetilde{Y}^{\varepsilon,k\delta,x,y}_{r+k\delta} dr+\frac 1\varepsilon\int_0^{s\varepsilon}F_2(x,\widetilde{Y}^{\varepsilon,k\delta,x,y}_{r+k\delta})dr+\frac 1{\sqrt{\varepsilon}}\int_0^{s\varepsilon}G_2(x,\widetilde{Y}^{\varepsilon,k\delta,x,y}_{r+k\delta})dW^{k\delta}_r\nonumber\\
=&y+\int_0^sA \widetilde{Y}^{\varepsilon,k\delta,x,y}_{r\varepsilon+k\delta}dr+\int_0^s F_2(x,\widetilde{Y}^{\varepsilon,k\delta,x,y}_{r\varepsilon+k\delta})dr
+\int_0^sG_2(x,\widetilde{Y}^{\varepsilon,k\delta,x,y}_{r\varepsilon+k\delta})d\bar{W}^{k\delta}_r,
\end{align}
where $W^{k\delta}_r:=W_{r+k\delta}-W_{k\delta}$ and $\bar{W}^{k\delta}_r:=\frac 1{\sqrt{\varepsilon}}W^{k\delta}_{r\varepsilon}$.

It is recalled that the solution of the frozen equation (\ref{frozeneq}) is given by
$$Y_s^{x,y}=y + \int_0^s A Y_r^{x,y} dr +\int_0^s F_2(x, Y_r^{x,y}) dr+ \int_0^s G_2(x, Y_r^{x,y}) d \widetilde{W}_r, ~~\text{for any}~s >0.$$
Therefore, the uniqueness of the solutions to Eq.~(\ref{Step3equ}) and Eq.~(\ref{frozeneq}) implies that the distribution of $\{\widetilde{Y}^{\varepsilon,k\delta,x,y}_{s\varepsilon+k\delta}\}_{0\leq s\leq \frac \delta\varepsilon}$ coincides with the distribution of $\{Y^{x,y}_s\}_{0\leq s\leq \frac \delta\varepsilon}$. So we have
\begin{align*}
\Psi_k(s,r)=&\int_\Omega \widetilde{\mathbb E} \Big[ \big\langle F_1(X^\varepsilon_{k\delta}(\omega),Y^{X^\varepsilon_{k\delta}(\omega),
		\hat{Y}^\varepsilon_{k\delta}(\omega)}_s)-\bar{F}_1(X^\varepsilon_{k\delta}(\omega)),\\
&\qquad\quad F_1(X^\varepsilon_{k\delta}(\omega),Y^{X^\varepsilon_{k\delta}(\omega),
		\hat{Y}^\varepsilon_{k\delta}(\omega)}_r)-\bar{F}_1(X^\varepsilon_{k\delta}(\omega)) \big\rangle_{\mathcal H}\Big] \mathbb P(d\omega)\\
=&\int_\Omega\int_{\widetilde{\Omega}}\Big\langle \widetilde{\mathbb E} \Big[F_1(X^\varepsilon_{k\delta}(\omega),Y^{X^\varepsilon_{k\delta}(\omega),
		Y^{X^\varepsilon_{k\delta}(\omega),\hat{Y}^\varepsilon_{k\delta}(\omega)}_r(\omega')}_{s-r}-
	\bar{F}_1(X^\varepsilon_{k\delta}(\omega))\Big],\\
&\qquad\qquad F_1(X^\varepsilon_{k\delta}(\omega),Y^{X^\varepsilon_{k\delta}(\omega),
		\hat{Y}^\varepsilon_{k\delta}(\omega)}_r(\omega'))-\bar{F}_1(X^\varepsilon_{k\delta}(\omega))\Big\rangle_{\mathcal H}
\widetilde{\mathbb P}(d\omega')\mathbb P(d\omega),	
\end{align*}
where in the second equality, we used the Markov property of process $\{Y^{x,y}_t\}$.
Using Lemma \ref{ergo} and estimate (\ref{frozenest}) we have
\begin{align}\label{Psiest}
\Psi_k(s,r)
&\leq C\int_\Omega \int_{\widetilde{\Omega}}\Big\{\Big[1+\|X^\varepsilon_{k\delta}(\omega)\|_{\mathcal H}+\|Y^{X^\varepsilon_{k\delta}(\omega),
		\hat{Y}^\varepsilon_{k\delta}(\omega)}_r(\omega')\|_{\mathcal H}\Big]  e^{-(s-r)\rho}  \nonumber\\
&~~~\times\Big[1+\|X^\varepsilon_{k\delta}(\omega)\|_{\mathcal H}+\|Y^{X^\varepsilon_{k\delta}(\omega),
		\hat{Y}^\varepsilon_{k\delta}(\omega)}_r(\omega')\|_{\mathcal H}\Big]\Big\}\widetilde{\mathbb
	P}(d\omega')\mathbb P(d\omega)  \nonumber\\
&\leq C\int_\Omega\Big[1+\|X^\varepsilon_{k\delta}(\omega)\|_{\mathcal H}^2+\|\hat{Y}^\varepsilon_{k\delta}(\omega)\|_{\mathcal H}^2\Big]\mathbb P(d\omega) e^{-(s-r)\rho}   \nonumber\\
&\leq C\int_\Omega\Big[1+\|X^\varepsilon_{k\delta}(\omega)\|_{\mathcal H_\eta}^2+\|\hat{Y}^\varepsilon_{k\delta}(\omega)\|_{\mathcal H}^2\Big]\mathbb P(d\omega) e^{-(s-r)\rho}   \nonumber\\
&\leq C(1+\|x\|_{\mathcal H_\eta}^2+\|y\|_{\mathcal H}^2) e^{-(s-r)\rho} .
\end{align}
In the third inequality, we used the embedding $\mathcal H_\eta \subset \mathcal H$ continuously, and in the last inequality, we used Lemma \ref{Xetaest} and estimate (\ref{Auxest}).

By substituting (\ref{Psiest}) in (\ref{Psi}), we obtain
\begin{align}\label{L4}
L_4 & \leq \frac {C_T\varepsilon^2}{\delta^{2(1+\eta)}} (1+\|x\|_{\mathcal H_\eta}^2+\|y\|_{\mathcal H}^2) \int_0^{\frac \delta\varepsilon}\int_r^{\frac \delta\varepsilon} e^{-(s-r)\rho}  dsdr\nonumber\\
&\leq C_T (1+\|x\|_{\mathcal H_\eta}^2+\|y\|_{\mathcal H}^2) \frac {\varepsilon^2}{\delta^{2(1+\eta)}} \Big(\frac \delta{\varepsilon\rho}-\frac 1{\rho^2}+\frac 1{\rho^2}e^{-\frac {\rho\delta}{\varepsilon}}\Big).
\end{align}
Substituting (\ref{K2eta1})-(\ref{L3}) and (\ref{L4}) into (\ref{K2eta}), we have
\begin{align*}
\mathbb E\|K_2(\cdot)\|_{\eta,0}^2
\leq C_T(1+\|x\|_{\mathcal H_\eta}^2+\|y\|_{\mathcal H}^2)
\Big(\delta^{2\sigma}+\delta^{2(1-\eta)}+\frac \varepsilon{\delta^{1+2\eta}}+\frac {\varepsilon^2}{\delta^{2(1+\eta)}} \Big).
\end{align*}

Now, we estimate the remainder term $\mathbb E |R^{K_2(\cdot)}|_{2\gamma,-2\gamma}^2$ of $K_2(t)$.
By means of (\ref{remainder}), it is easy to see that
\begin{align*}
\mathbb E |R^{K_2(\cdot)}|_{2\gamma,-2\gamma}^2&=\mathbb E \Big[ \sup_{s<t} |t-s|^{-4\gamma}
 \Big\| \int_s^t S_{t-r}\big[F_1(X_{r(\delta)}^\varepsilon,\hat{Y}_r^\varepsilon)-\bar{F}_1(X_{r(\delta)}^\varepsilon) \big]dr \Big\|_{\mathcal{H}_{-2\gamma}}^2 \Big] \nonumber\\
&\leq 2\mathbb E \Big[ \sup_{s<t} |t-s|^{-4\gamma}
 \Big\| \int_s^t \big( S_{t-r}-S_{t-r(\delta)} \big) \big[F_1(X_{r(\delta)}^\varepsilon,\hat{Y}_r^\varepsilon)-\bar{F}_1(X_{r(\delta)}^\varepsilon) \big]dr \Big\|_{\mathcal{H}_{-2\gamma}}^2 \Big] \nonumber\\
&~~~+ 2\mathbb E \Big[ \sup_{s<t} |t-s|^{-4\gamma}
 \Big\| \int_s^t  S_{t-r(\delta)}  \big[F_1(X_{r(\delta)}^\varepsilon,\hat{Y}_r^\varepsilon)-\bar{F}_1(X_{r(\delta)}^\varepsilon) \big]dr \Big\|_{\mathcal{H}_{-2\gamma}}^2 \Big] \nonumber\\
&=: \widetilde{K}_{21}+\widetilde{K}_{22} .
\end{align*}
For $\widetilde{K}_{21}$, using H\"{o}lder's inequality, (\ref{semi}), (\ref{F1Bound}) and $\|\bar{F}_1\|_\infty \leq \|F_1\|_\infty$, we have
\begin{align}\label{K2rem1}
\widetilde{K}_{21}&\leq  4\mathbb E \Big[ \sup_{s<t} |t-s|^{-4\gamma}
  \int_s^t \| S_{t-r}-S_{t-r(\delta)} \|_{\mathcal L(\mathcal{H}_{-2\gamma};\mathcal{H}_{-2\gamma})}^2 dr  \nonumber\\
&~~~~~~~~~~~~~~~~~~~~~~~\times \int_s^t \big(\|F_1(X_{r(\delta)}^\varepsilon,\hat{Y}_r^\varepsilon)\|_{\mathcal{H}_{-2\gamma}}^2+\|\bar{F}_1(X_{r(\delta)}^\varepsilon)\|_{\mathcal{H}_{-2\gamma}}^2 \big) dr \Big] \nonumber\\
&\leq  C\mathbb E \Big[ \sup_{s<t} |t-s|^{-4\gamma}
  \int_s^t \| S_{t-r}-S_{t-r(\delta)} \|_{\mathcal L(\mathcal{H}_{-2\gamma};\mathcal{H}_{-2\gamma})}^2 dr  \nonumber\\
&~~~~~~~~~~~~~~~~~~~~~~~~\times \int_s^t \big(\|F_1(X_{r(\delta)}^\varepsilon,\hat{Y}_r^\varepsilon)\|_{\mathcal{H}}^2+\|\bar{F}_1(X_{r(\delta)}^\varepsilon)\|_{\mathcal{H}}^2 \big) dr \Big] \nonumber\\
&\leq C   \sup_{s<t}\Big[ |t-s|^{1-4\gamma}
  \int_s^t \| S_{t-r}-S_{t-r} \cdot S_{r-r(\delta)} \|_{\mathcal L(\mathcal{H}_{-2\gamma};\mathcal{H}_{-2\gamma})}^2 dr  \Big] \nonumber\\
&\leq C   \sup_{s<t} \Big[ |t-s|^{1-4\gamma}
  \int_s^t \| S_{t-r} \|_{\mathcal L(\mathcal H_{-2\gamma-\sigma^{\prime}};\mathcal{H}_{-2\gamma})}^2  \| S_{r-r(\delta)}-Id \|_{\mathcal L(\mathcal{H}_{-2\gamma};\mathcal H_{-2\gamma-\sigma^{\prime}})}^2 dr  \Big] \nonumber\\
&\leq C \delta^{2\sigma^{\prime}}   \sup_{s<t} \Big[ |t-s|^{1-4\gamma}
  \int_s^t |t-r|^{-2\sigma^{\prime}}  dr  \Big] \nonumber\\
&\leq C \delta^{2\sigma^{\prime}}  \sup_{s<t} |t-s|^{2(1-2\gamma-\sigma^{\prime})} \leq C T^{2(1-2\gamma-\sigma^{\prime})} \delta^{2\sigma^{\prime}},
\end{align}
where $0<\sigma^{\prime}<1-2\gamma$.
With regard to $\widetilde{K}_{22}$, we notice that
\begin{align*}
\widetilde{K}_{22}&\leq 2\mathbb E \Big[ \sup_{s<t} |t-s|^{-4\gamma}
 \Big\| \int_s^t  S_{t-r(\delta)}  \big[F_1(X_{r(\delta)}^\varepsilon,\hat{Y}_r^\varepsilon)-\bar{F}_1(X_{r(\delta)}^\varepsilon) \big]dr \Big\|_{\mathcal{H}_{-2\gamma}}^2 1_{\{|t-s|\leq \delta \}}\Big] \nonumber\\
&~~~+ 2\mathbb E \Big[ \sup_{s<t} |t-s|^{-4\gamma}
 \Big\| \int_s^t  S_{t-r(\delta)}  \big[F_1(X_{r(\delta)}^\varepsilon,\hat{Y}_r^\varepsilon)-\bar{F}_1(X_{r(\delta)}^\varepsilon) \big]dr \Big\|_{\mathcal{H}_{-2\gamma}}^2 1_{ \{|t-s|>\delta \}} \Big] \nonumber\\
&\leq 2\mathbb E \Big[ \sup_{s<t} |t-s|^{-4\gamma}
 \Big\| \int_s^t  S_{t-r(\delta)}  \big[F_1(X_{r(\delta)}^\varepsilon,\hat{Y}_r^\varepsilon)-\bar{F}_1(X_{r(\delta)}^\varepsilon) \big]dr \Big\|_{\mathcal{H}_{-2\gamma}}^2 1_{\{|t-s|\leq \delta \}}\Big] \nonumber\\
&~~~+ 6\mathbb E \Big[ \sup_{s<t} |t-s|^{-4\gamma}
 \Big\| \int_s^{([s/\delta]+1)\delta}  S_{t-r(\delta)}  \big[F_1(X_{r(\delta)}^\varepsilon,\hat{Y}_r^\varepsilon)-\bar{F}_1(X_{r(\delta)}^\varepsilon) \big]dr \Big\|_{\mathcal{H}_{-2\gamma}}^2 1_{ \{|t-s|>\delta \}} \Big] \nonumber\\
&~~~+ 6\mathbb E \Big[ \sup_{s<t} |t-s|^{-4\gamma}
 \Big\| \int_{t(\delta)}^t  S_{t-r(\delta)}  \big[F_1(X_{r(\delta)}^\varepsilon,\hat{Y}_r^\varepsilon)-\bar{F}_1(X_{r(\delta)}^\varepsilon) \big]dr \Big\|_{\mathcal{H}_{-2\gamma}}^2 1_{ \{|t-s|>\delta \}} \Big] \nonumber\\
&~~~+ 6\mathbb E \Big[ \sup_{s<t} |t-s|^{-4\gamma}
 \Big\| \sum_{k=[s/\delta]+1}^{[t/\delta]-1} \int_{k\delta}^{(k+1)\delta} S_{t-k\delta} \big[F_1(X_{k\delta}^\varepsilon,\hat{Y}_r^\varepsilon)-\bar{F}_1(X_{k\delta}^\varepsilon) \big]dr \Big\|_{\mathcal{H}_{-2\gamma}}^2 1_{ \{|t-s|>\delta \}} \Big] \nonumber\\
&=:\widetilde{L}_1+\widetilde{L}_2+\widetilde{L}_3+\widetilde{L}_4.
\end{align*}

For $\widetilde{L}_1, \widetilde{L}_2$ and $\widetilde{L}_3$, similar to estimates for $L_1, L_2$ and $L_3$, using H\"{o}lder's inequality and the embedding $\mathcal H \subset \mathcal H_{-2\gamma}$ continuously, we have
\begin{align}
\widetilde{L}_1 &\leq  C \mathbb E \Big[ \sup_{s<t} |t-s|^{-4\gamma}
  \int_s^t \|S_{t-r(\delta)} \|_{\mathcal L(\mathcal{H}_{-2\gamma};\mathcal{H}_{-2\gamma})}^2 dr  \nonumber\\
  &~~~~~~~~~~~~~~~~~\times \int_s^t  \big( \| F_1(X_{r(\delta)}^\varepsilon,\hat{Y}_r^\varepsilon)\|_{\mathcal{H}}^2
  +\|\bar{F}_1(X_{r(\delta)}^\varepsilon) \|_{\mathcal{H}}^2 \big) dr 1_{\{|t-s|\leq \delta \}}\Big] \leq  C \delta^{2(1-2\gamma)}, \label{tildeL1}\\
\widetilde{L}_2
&\leq  C\delta^{-4\gamma} \mathbb E \Big[ \sup_{s<t}
  \int_s^{([s/\delta]+1)\delta} \|S_{t-r(\delta)}\|_{\mathcal L(\mathcal{H}_{-2\gamma};\mathcal{H}_{-2\gamma})}^2 dr \nonumber\\
  &~~~~~~~~~~~~\times \int_s^{([s/\delta]+1)\delta}   \big(\| F_1(X_{r(\delta)}^\varepsilon,\hat{Y}_r^\varepsilon)\|_{\mathcal{H}}^2+\|\bar{F}_1(X_{r(\delta)}^\varepsilon) \|_{\mathcal{H}}^2 \big) dr \Big] \leq  C \delta^{2(1-2\gamma)}, \label{tildeL2} \\
\widetilde{L}_3 &\leq  C\delta^{-4\gamma} \mathbb E \Big[ \sup_{s<t}
  \int_{t(\delta)}^t \|S_{t-r(\delta)}\|_{\mathcal L(\mathcal{H}_{-2\gamma};\mathcal{H}_{-2\gamma})}^2 dr \nonumber\\
  &~~~~~~~~~~~~~~~~~\times \int_{t(\delta)}^t   \big(\| F_1(X_{r(\delta)}^\varepsilon,\hat{Y}_r^\varepsilon)\|_{\mathcal{H}}^2+\|\bar{F}_1(X_{r(\delta)}^\varepsilon) \|_{\mathcal{H}}^2 \big) dr \Big] \leq  C \delta^{2(1-2\gamma)}. \label{tildeL3}
\end{align}
For $\widetilde{L}_4$, making use of H\"{o}lder's inequality, the compressibility of semigroup $(S_t)_{t\geq 0}$ and the fact that the embedding $\mathcal H \subset \mathcal H_{-2\gamma}$ is continuous, it can be obtained that
\begin{align*}
\widetilde{L}_4 &\leq  6\delta^{-4\gamma}\mathbb E\Big[\sup_{s<t} \Big\| \sum_{k=[s/\delta]+1}^{[t/\delta]-1} \int_{k\delta}^{(k+1)\delta} S_{t-k\delta} \big[F_1(X_{k\delta}^\varepsilon,\hat{Y}_r^\varepsilon)-\bar{F}_1(X_{k\delta}^\varepsilon) \big]dr \Big\|_{\mathcal{H}_{-2\gamma}}^2 1_{ \{|t-s|>\delta \}} \Big] \\
&\leq  6\delta^{-4\gamma}\mathbb E\Big[\sup_{s<t} \Big(\Big[\frac t\delta \Big]-\Big[\frac s\delta \Big]-1 \Big) \\
    &~~~~~~~~~~~~~~\times \sum_{k=[s/\delta]+1}^{[t/\delta]-1}\Big\|  \int_{k\delta}^{(k+1)\delta} S_{t-k\delta} \big[F_1(X_{k\delta}^\varepsilon,\hat{Y}_r^\varepsilon)-\bar{F}_1(X_{k\delta}^\varepsilon) \big]dr \Big\|_{\mathcal{H}_{-2\gamma}}^2 1_{ \{|t-s|>\delta \}} \Big] \\
&\leq  C_T\delta^{-1-4\gamma} \mathbb E \Big[\sum_{k=0}^{[T/\delta]-1}\Big\|  \int_{k\delta}^{(k+1)\delta} S_{t-k\delta} \big [F_1(X_{k\delta}^\varepsilon,\hat{Y}_r^\varepsilon)-\bar{F}_1(X_{k\delta}^\varepsilon) \big]dr \Big\|_{\mathcal{H}_{-2\gamma}}^2 1_{ \{|t-s|>\delta \}} \Big]\\
&\leq \frac {C_T}{\delta^{2(1+2\gamma)}} \max_{0\leq k\leq[T/\delta]-1}\mathbb E\Big\| S_{t-k\delta} \int_{k\delta}^{(k+1)\delta}  \big[F_1(X_{k\delta}^\varepsilon,\hat{Y}_r^\varepsilon)-\bar{F}_1(X_{k\delta}^\varepsilon) \big]dr \Big\|_{\mathcal{H}_{-2\gamma}}^2\\
&\leq \frac {C_T}{\delta^{2(1+2\gamma)}} \max_{0\leq k\leq[T/\delta]-1}\mathbb E\Big\| \int_{k\delta}^{(k+1)\delta} \big[F_1(X_{k\delta}^\varepsilon,\hat{Y}_r^\varepsilon)-\bar{F}_1(X_{k\delta}^\varepsilon) \big]dr \Big\|_{\mathcal{H}_{-2\gamma}}^2\\
&\leq \frac {C_T\varepsilon^2}{\delta^{2(1+2\gamma)}} \max_{0\leq k\leq[T/\delta]-1}
	\mathbb E\Big\|\int_0^{\frac \delta\varepsilon} \big[F_1(X_{k\delta}^\varepsilon,\hat{Y}_{r\varepsilon+k\delta}^\varepsilon)-\bar{F}_1(X_{k\delta}^\varepsilon) \big]dr \Big\|_{\mathcal H}^2\\
&\leq \frac {C_T\varepsilon^2}{\delta^{2(1+2\gamma)}} \max_{0\leq k\leq[T/\delta]-1}
\int_0^{\frac \delta\varepsilon}\int_r^{\frac \delta\varepsilon}\Psi_k(s,r)dsdr.
\end{align*}
Here $\Psi_k(s,r)$ is shown in (\ref{psi}), so similarly we have
\begin{equation}\label{tildeL4}
\widetilde{L}_4
\leq C_T (1+\|x\|_{\mathcal H_\eta}^2+\|y\|_{\mathcal H}^2) \frac {\varepsilon^2}{\delta^{2(1+2\gamma)}} \Big(\frac \delta{\varepsilon\rho}-\frac 1{\rho^2}+\frac 1{\rho^2}e^{-\frac {\rho\delta}{\varepsilon}}\Big).
\end{equation}
Combining estimates (\ref{K2rem1})-(\ref{tildeL4}) we have
\begin{align*}\label{Step4Res}
\mathbb E |R^{K_2(\cdot)}|_{2\gamma,-2\gamma}^2
\leq C_T(1+\|x\|_{\mathcal H_\eta}^2+\|y\|_{\mathcal H}^2)\Big(\delta^{2\sigma^{\prime}}+\delta^{2(1-2\gamma)}+\frac \varepsilon{\delta^{1+4\gamma}}+\frac {\varepsilon^2}{\delta^{2(1+2\gamma)}} \Big).
\end{align*}
Finally, according to the proof of \textbf{Step2} and \textbf{Step3}, we take $\delta=\varepsilon^{\frac 1{2(1+2\gamma)}}$, then the following formula holds.
\begin{align*}
&\mathbb E\|\mathcal M^\varepsilon,0\|_{\mathcal D_{\mathbf B}^{2 \gamma, \eta}}^2\\
\leq & C_{M,T} (1+\|x\|_{\mathcal H_\eta}^2+\|y\|_{\mathcal H}^2)  \\
&\times \Big(\delta^{2\eta}+\delta^{2(1-2\gamma)}+\delta^{2(1-\eta)}+\delta^{\frac 12+2\eta-2\gamma}+ \delta^{2\sigma}+\delta^{2\sigma^{\prime}} +\frac \varepsilon{\delta^{1+4\gamma}}+\frac {\varepsilon^2}{\delta^{2(1+2\gamma)}}+\frac \varepsilon{\delta^{1+2\eta}}+\frac {\varepsilon^2}{\delta^{2(1+\eta)}} \Big)\\
\leq& C_{M,T} (1+\|x\|_{\mathcal H_\eta}^2+\|y\|_{\mathcal H}^2) \Big(\delta^{\frac 12+2\eta-2\gamma}+\delta^{2(1-2\gamma)}+ \delta^{2\sigma}+\delta^{2\sigma^{\prime}} +\frac \varepsilon{\delta^{1+4\gamma}}+\frac {\varepsilon^2}{\delta^{2(1+2\gamma)}} \Big) \\
\leq& C_{M,T} (1+\|x\|_{\mathcal H_\eta}^2+\|y\|_{\mathcal H}^2)  \Big(\varepsilon^{\frac {1/2+2\eta-2\gamma} {2(1+2\gamma)}}+\varepsilon^{\frac {1-2\gamma}{1+2\gamma}}+ \varepsilon^{ \frac \sigma{1+2\gamma}}+\varepsilon^{ \frac {\sigma^{\prime}} {1+2\gamma} } +\varepsilon^{ \frac 1{2(1+2\gamma)} }+\varepsilon \Big).
\end{align*}
Letting $\varepsilon\rightarrow0$, we have
\begin{equation*}
	\lim_{\varepsilon\rightarrow0} \mathbb E\|\mathcal M^\varepsilon,0\|_{\mathcal D_{\mathbf B}^{2 \gamma, \eta}}^2=0.
\end{equation*}
Therefore, by estimate (\ref{Xdifeta}), it is straightforward that
$$\lim_{\varepsilon \to 0} \mathbb E \big[\|X^\varepsilon-\bar{X}\|_{\eta, 0}^2 \big]=0.$$
The proof is complete.
\hfill $\square$

\end{proof}

\section*{Acknowledgements}
M. Li and B. Pei were partially supported by National Natural Science Foundation of China (NSFC) under Grant No.12172285, Guangdong Basic and Applied Basic Research Foundation under Grant No. 2024A1515012164, Shaanxi
Fundamental Science Research Project for Mathematics and Physics under Grant No.22JSQ027 and Fundamental
Research Funds for the Central Universities. 
Y. Li was partially supported by NSFC under Grant No. 12301566, Science and Technology Commission of Shanghai Municipality under Grant No. 23JC1400300, and Chenguang Program of Shanghai Education Development Foundation and Shanghai Municipal Education Commission under Grant No.22CGA01.
Y. Xu was partially supported by Key International (Regional) Joint Research Program of NSFC under Grant No.12120101002.

\end{document}